\newtheorem*{rep@theorem}{\rep@title}
\newcommand{\newreptheorem}[2]{%
\newenvironment{rep#1}[1]{%
 \def\rep@title{#2 \ref{##1}}%
 \begin{rep@theorem}}%
 {\end{rep@theorem}}}
\newtheorem{thm}{Theorem}[section]
\newtheorem{lemma}[thm]{Lemma}
\newtheorem{cor}[thm]{Corollary}
\newtheorem{prop}[thm]{Proposition}
\theoremstyle{definition}
\newtheorem{definition}[thm]{Definition}
\newtheorem{remark}[thm]{Remark}
\newtheorem{example}[thm]{Example}
\newtheorem{notation}[thm]{Notation}
\def\R{{\mathbb R}}
\def\Z{{\mathbb Z}}
\def\F{{\mathbb F}}
\def\B{{\mathcal B}}
\def\BB{{\mathbf B}}
\def\CC{{\mathbf C}}
\def\JJ{{\mathbf J}}
\def\b{\beta}
\def\i{\iota}
\def\G{{\Gamma}}
\def\e{{\epsilon}}
\begin{document}

\title{Relative Rips Machine}

\author{Pei Wang}
\date{\today}

\begin{abstract}
We study isometric actions of finitely presented groups on $\R$-trees.
In this paper, we develop a relative version of the Rips machine to study \textit{pairs} of such actions. An important example of a \textit{pair} is a group action on an $\R$-tree and a subgroup action on its minimal invariant subtree.
\end{abstract} 

\maketitle
\tableofcontents

%New section %

\section{Introduction}
The Rips machine is an important tool in the study of group actions on $\R$-trees, see \cite{bfstableactions} \cite{franchrips}. %It classifies (geometric) actions on real trees. %(measured laminations on 2-complexes)
In Bestvina and Feighn's version, they describe this machine as moves on a class of $2$-complexes equipped with measured laminations, called \textit{band complexes}.

A \textit{band complex} is constructed in the following way (see more details in Definition~ \ref{bandcomplex}). A \textit{band} $B$ is a space of the form $b\times I$ where $b=[0,a]$ is an arc of the real line and $I=[0,1]$. $b\times \{0\}$ and $b\times\{1\}$ are called \textit{bases} of $B$. 
%A subset of the form $\{point\}\times I$ is called a \textit{vertical fiber} of $B$. 
A \textit{union of bands} $Y$ is a space obtained from a disjoint union $\Gamma$ of intervals by attaching a finite collection of bands through length-preserving homeomorphisms from their bases into $\Gamma$. Each band is assigned a \textit{weight} according to the attaching map. A band $B$ has weight $0$ band if it is an annulus (Definition~\ref{weight}). $\overline{Y}$ denotes the union of bands obtained from $Y$ by omitting weight $0$ bands.
A subset of a band of the form $\{point\}\times I$ is called a \textit{vertical fiber}.
A \textit{leaf} of $Y$ is a class of the equivalence relation generated by $x\sim x'$ if $\{x,x'\}$ is a subset of some vertical fiber of a band. $Y$ is then naturally \textit{foliated} by its leaves. 
A band complex $X$ is a relative $CW$ $2$-complex based on a union of bands $Y$ with 0-, 1-, and 2-cells attached along leaves.

%the image of all attaching maps is disjoint from the ``interior'' of $Y$ (the measured lamination on $Y$). %The intersections between these attached cells and $Y$ are called \textit{attaching regions}.

Band complexes arise in the study of group actions on real trees via a process called \textit{resolution} (Definition~\ref{resolution}). Let $G$ be a finitely presented group acting on an $\R$-tree $T$ by isometries, a $G$-tree for short, and $X$ be a band complex with fundamental group $G$. A \textit{resolution} is a $G$-equivariant map $f$ from the universal cover $\widetilde{X}$ to $T$, such that $f$ maps each lift of a leaf to a point in $T$. Every $G$-tree $T$ has a resolution \cite{ms88}, and one may study $T$ through analyzing its resolutions.
%For a given action of a finitely presented group $G$ on an $\R$-tree $T_G$, 
%a band complex $X_G$ encoding the action $G\curvearrowright T_G$ can be obtained through a process, called \textit{resolution} (see Definition~\ref{resolution}). Every $T_G$ has a resolution.

The Rips machine is used to classify band complexes. The machine takes as input an arbitrary band complex $X$ and puts it into a normal form -- whose underlying union of bands $Y$ is a finite disjoint union of \textit{components} each of which is one of the following four types: \textit{simplicial}, \textit{surface, toral} and \textit{thin}. The latter three are called \textit{minimal} components. Almost every leaf of a minimal component is dense. %Associated to a band complex $X$ in normal form, there is a natural graph of spaces decomposition, denoted by $\Delta(X)$. This is a generalization of the $GAD$'s in \cite{bfnotes}. The underlying graph of $\Delta(X)$ is bipartite with one class called \textit{rigid} and the other called \textit{foliated}. Further each foliated vertex corresponds to a component of $Y$.  

%Associated to $\Delta(X)$, there is a subgroup of the automorphism group of $G$, called the \textit{modular group} $Mod(\Delta(X))$. In particular, the automorphism induced by a homotopy equivalence $\alpha: X\to X$ supported on the interior of one foliated vertex is an important example of modular automorphism.

%Every leaf in a simplicial component is compact, whereas every leaf is dense in the latter three cases.  
%Roughly speaking, a simplicial component is an $I$-bundle over a leaf, a surface type component is a compact hyperbolic surface with geodesic lamination and a toral type component is the $2$-skeleton of a $n$-torus with the lamination induced by irrational planes of codimension $1$. However, thin type component does not have a standard model.
\vspace{1pc}

In this paper, we develop a relative version of the Rips machine to study \textit{pairs} of group actions on $\R$-trees. To motivate the construction, for a given $G$-tree $T_G$ and a finitely presented subgroup $H<G$, consider a minimal invariant $H$-subtree $T_H\subset T_G$. Let $X_H$ be a band complex resolving $T_H$ and $X_G$ containing $X_H$ be a finite $2$-complex with fundamental group $G$. One may put a band complex structure on $X_G$ by extending the resolution $r: \widetilde{X}_H\to T_H$ to a resolution $\hat{r}: \widetilde{X}_G\to T_G$. Here the two band complexes $X_H$ and $X_G$ along with the inclusion between them, denoted by $(X_H\hookrightarrow X_G)$, is an important example of a \textit{pair of band complexes}. In general, a pair of band complexes $(X\overset{\i}{\to}X')$ consists two band complexes $X$, $X'$ and a \textit{morphism} $\i: X\to X'$ which is locally injective when restricted to the underlying union of bands (Definition~\ref{pair}).

The input of the relative Rips machine is a \textit{pair of band complexes} $(X\overset{\i}{\to}X')$. %that is two band complexes $X$ and $X'$ with a \textit{morphism} $\i: X\to X'$ which is locally injective when restricted to their underlying unions of bands (Definition~\ref{pair}). 
The machine is made of \textit{moves} (Section \ref{background}) on pairs of band complexes. If $(X\overset{\i}{\to}X')$ resolves a pair of trees  $(T\hookrightarrow  T')$ and $(X^*\overset{\i^*}{\to}{X'^*})$ is related to $(X\overset{\i}{\to}X')$ by a sequence of moves, then $(X^*\overset{\i^*}{\to}{X'^*})$ also resolves $(T\hookrightarrow  T')$. This machine converts an arbitrary pair of band complexes into a pair of band complexes in normal forms. Moreover, for a pair $(X\overset{\i}{\to}X')$, eventually only weight $0$ bands in $X$ map into weight $0$ bands in $X'$. Based on these, we have the following results.

Given a pair of band complexes $(X\overset{\i}{\to}X')$ with the underlying pair of unions of bands 
$(Y\overset{\i}{\to}Y')$, let $Y_0\subset Y$ be a component and $Y'_0 \subset Y'$ be the component containing $\i(Y_0)$. %One may ask whether the structure of $Y_0$ determines the structure of $Y'_0$ or vice versa? 
If $Y'_0$ is simplicial, then so is $Y_0$ since only for simplicial component, every leaf is compact. If $Y'_0$ is a minimal component, we show:
\begin{reptheorem}{t:main} (Part I)
Let $(Y_0 \overset{\i} {\to} Y'_0)$ be a pair of components. Suppose that $Y'_0$ is a minimal component, then $Y_0$ is either a minimal component of the same type as $Y'_0$ or $Y_0$ is simplicial. 
\end{reptheorem}

Using the normal form obtained from the Rips machine, one may check that a finite cover of a minimal component is a minimal component of the same type. The reverse direction is clearly false for toral components (see Remark~\ref{e:infinitetoralcover}). In the case that $Y'_0$ is a surface or thin component, we get:

\begin{reptheorem}{t:main} (Part II)
 $Y_0$ is a surface or thin component if and only if, the relative Rips machine eventually converts $(Y_0 \overset{\i} {\to} Y'_0)$ into a pair $(Y_0^* \overset{\i^*} {\to} Y_0^{'*})$ with the property that $\overline{Y_0^*}$ is a finite cover of $\overline{Y_0^{'*}}$. %In particular, both $Y_n$ and $Y'_n$ are in a standard form.
\end{reptheorem}

%In practice, weight $0$ bands may be omitted in many cases. Each band of a band complex $X$ is assigned a weight according to its attaching map. A band $B=b\times [0,1]$ is called a \textit{weight} $0$ band if it is an annulus (Definition~\ref{weight}). Suppose that $X$ resolves a $G$-tree $T$. Then each weight $0$ band represents an element of an edge stabilizer of $T$. Most of the $G$-trees that we are interested in have some \textit{stability} or \textit{smallness} restrictions on their edge stabilizers. Such restrictions all allow us to omit any weight $0$ bands.

%lead to that the weight $0$ bands of a minimal component are trivial, hence can be omitted. 

%the finite covering condition is not only sufficient but also necessary for surface and thin components. 
%More precisely, let $(Y_0\overset{\i_0}{\to}Y'_0)$ be a pair of components as described above where $\i_0$ is the restriction of $\i$ on $Y_0$. 

%\begin{reptheorem}{output1}
%If $Y'_0$ is a minimal component, either $Y_0$ is of the same type as $Y'_0$ or $Y_0$ is simplicial. 
%Moreover, $Y_0$ and $Y'_0$ are both surface or thin components if and only if omitting attaching annuli, the relative Rips machine eventually converts $Y_0$ into a finite cover of $Y'_0$. %for sufficiently large $n$. %In addition, for sufficiently large $n$, $Y'^n$ is in a standard form and every surface and every thin component of $Y^n$ is also in standard form.
%\end{reptheorem}

Now back to our motivation, let $H<G$ be two finitely presented groups and $(X_H\overset{\i}{\to} X_G)$ be a resolving pair for $(T_H\hookrightarrow T_G)$. %Further assume that $T_G$ has trivial edge stabilizers (this implies that $X_G$ contains no weight $0$ bands). 
Denote the underlying unions of bands for $X_H$ and $X_G$ by $Y_H$ and $Y_G$ respectively, we have:

\begin{repcor}{output2}
Suppose that $Y_H$ and $Y_G$ are both single surface components or both single thin components and that $\pi_1(Y_H)$ generates $H$, $\pi_1(Y_G)$ generates $G$. If $T_G$ has trivial edge stabilizers, then $[G:H]$ is finite. 

%Let $H<G$ be two finitely presented groups. Further let $T_G$ be a $G$-tree with trivial edge stabilizers and $T_H\subset T_G$ be a minimal $H$-subtree\footnote{i.e. $T_H$ contains no proper $H$-subtrees.}. Suppose that $(X\overset{\i}{\to}X')$ is a pair of band complexes, that $X$ and $X'$ resolve $T_H$ and $ T_G$ correspondingly, that $Y$ and $Y'$ are single minimal components of either surface or thin type and  that $\pi_1(Y)$ generates $H$, $\pi_1(Y')$ generates $G$. Then $[G:H]$ is finite. 
\end{repcor}

%Let $\F_n$ be a non-abelian free group and $T$ be a real tree with the action $\F_n\curvearrowright T$ is \textit{very small}. 
Corollary~\ref{output2} paritially generalized Reynold's main result in \cite{rey} to the setting of all finitely presented groups.
%Note that a band complex consisting a single surface or thin component is dual to a indecomposable tree. Considering only \textit{geometric} actions, 
%In \cite{rey}, Reynold proves that for a given \textit{very small} action of a free group $\F$ on an \textit{indecomposable} tree $T$, suppose that $H<\F$ is finitely generated, then the action $H$ on its minimal invariant subtree $T_H$ has dense orbits if and only if $H$ has finite index in $\F$. Note that a band complex consisting a single surface or thin component is dual to a indecomposable tree. Corollary~\ref{output2} is   finitely presented groups for \textit{geometric} actions.

\vspace{1pc}

%Towards an answer to Question ~\ref{q:extension}, it is reasonable to consider modular automorphisms as we are working with band complexes. 
%We can further restrict our attention to the case where there is no weight $0$ bands. Since relative Rips machine eventually converts a pair of band complex into a new pair where only weight $0$ bands map into weight $0$ bands (Proposition~\ref{finalrelation}). 

%In the study of the first order theory of groups, a basic ingredient is group actions on $\R$-trees, \cite{sela7}. 

The motivation for building the relative Rips Machine is its applications to the elementary theory of free groups. In section~\ref{s:app}, we define a partial order on band complexes as follows. %and prove an important property of this order that will be applied in \cite{definable2} using the relative Rips machine.

Let $\i:X\to X'$ be a morphism between two band complexes. 
To simplify the notation, suppose that every component of $Y$, the underlying union of bands of $X'$, has a non-empty preimage. Then we say that $X<X'$ if one of the following holds. 
\begin{itemize}
 \item Different components of $Y$ map into the same component of $Y'$. 
 \item A component of $Y$ and its image in $Y'$ have different types.
 \item There exists a component $Y_0\subset Y$ of surface or thin type has the property that the relative Rips machine will eventually convert $\overline{Y}_0$ into a proper cover of its image. 
\end{itemize}

Based on Theorem~\ref{t:main}, we obtain the following proposition which will be used for induction arguments in \cite{definable2}.

\begin{repprop}{FSC}
 Every sequence of band complexes, $X_1<X_2<\dots$, eventually stabilizes. 
\end{repprop}

%Proposition~\ref{enlarge} together with Proposition~\ref{FSC} is particular useful for induction. 
%For instance, suppose that for any given band complex $X_H$, we can always build a pair $(X_H\overset{\i}{\to} X_G)$ satisfying the assumption of Proposition~\ref{enlarge}. Then inductively for a given $X_H$, we may build a pair of band complexes $(X_H\overset{\i}{\to} X_G)$ such that $(1)$ of Theorem~\ref{enlarge} holds for every minimal component of $Y_G$.

%given a pair $(X_H\overset{\i}{\to} X_G)$, we can inductively find an enlargement $X_{\widehat{H}}$ of $X_H$ and a new pair $(X_{\widehat{H}}\overset{\i'}{\to} X_G)$ such that $(1)$ in Theorem~\ref{enlarge} holds for every minimal component of $Y_G$. In particular, if $X_{\widehat{H}}=X_G$, then $X_G$ is an enlargement of $X_H$. 

\vspace{1pc}
The paper is organized as follows. Section~\ref{background} reviews the Rips machine. Among the three types of minimal components, thin type is the most interesting and the least familiar. In Appendix~\ref{thin}, we provide proofs for a few known facts on thin components. As an application, we prove a generalization of Sela's shortening argument for thin components. Section~\ref{RRM} gives basic set-up for the relative Rips Machine. Section~\ref{process} describes the three processes of the relative Rips machine. In Section~\ref{machineoutput}, we discuss the machine output and prove Theorem~\ref{t:main} and Corollary~\ref{output2}. In Section~\ref{s:app}, we define the partial order $<$ on band complexes and prove Proposition~\ref{FSC}.  

\vspace{1pc}
It is a great pleasure to thank my advisor Mark Feighn who introduced me to Rips machine, shared his knowledge and ideas with me, and suggested the possibility of a machine studying pairs of group actions.

\section{Background}\label{background}
In this section we briefly review basic definitions and list some useful properties without proofs about the Rips machine, see detail in \cite{bfstableactions}. %In what follows, $\F$ denotes the free group.

\begin{definition}
A \textbf{real tree}, or an \textbf{$\R$-tree}, is a metric space $T$ such that for any $x, y$ in $T$ there is a unique arc from $x$ to $y$ and this arc is isometric to an interval of the real line. 

Let $G$ be a group. A $G$-\textbf{tree} T is an \textbf{$R$}-tree with $G$ acting on the left by isometries . A $G$-tree is \textbf{minimal} if it has no proper $G$-subtree.
\end{definition}

Given an \textbf{$\R$}-tree $T$ and $x\in T$. A connected component of $T-x$ is called a \textit{direction} at $x$. We say that $x\in T$ is an \textit{ordinary point} if there are exactly two directions at $x$. A \textit{vertex} is a point which is not an ordinary point. An $\R$-tree is \textit{simplicial} if the set of vertices is discrete.

\begin{definition}
A \textbf{real graph} $\Gamma$ is a finite union of compact simplicial \textbf{$R$}-trees. In particular, each interval of these trees is identified with an arc of the real line.
\end{definition}

Let $I=[0,1]$. A \textit{band} $B$ is a space of the form $b\times I$ where $b=[0,a]$ is an arc of the real line, see Figure~\ref{aband}.  $b\times \{0\}$ and $b\times \{1\}$ are called the \textit{bases} of $B$. A \textit{vertical fiber} is a set of the form $\{point\}\times I$. A subset a vertical fiber is \textit{vertical}, for example the set consisting $\{x, x'\}$ in Figure~\ref{aband}. Similarly, a subset of $b\times\{point\}$ is \textit{horizontal}.

A band has an involution $\delta_{B}$ given by reflection in $b\times\{\frac{1}{2}\}$, known as the \textit{dual map}. 
We may identify $b$ with one base of $B$. Two bases of $B$ then denoted by $b$ and $dual(b)$.

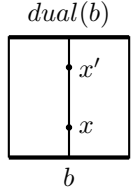
\begin{figure}[h]
   \centering
      \begin{tikzpicture}[thick, scale=0.8]
\footnotesize
\draw (0,0)--(2,0)--(2,2)--(0,2)--(0,0);
\draw (1,0)--(1,2);
\draw[ultra thick](0,0)--(2,0);
\draw[ultra thick](0,2)--(2,2);
\node [above] at (1,2) {$dual(b)$};
\node[below] at (1,0) {$b$};
\coordinate (e) at (1,0.5);
\filldraw(e) circle (1pt) node[right] {$x$};

\coordinate (e) at (1,1.5);
\filldraw(e) circle (1pt) node[right] {$x'$};

\end{tikzpicture}
\caption{\footnotesize The band $B=b\times I$} \label{aband}

\end{figure}

\begin{definition}\label{union of bands}
Let $B_1, \dots, B_n$ be bands and $\Gamma$ be a real graph. For each base $b_i$ of a $B_i$, let $f_{b_i}$ be a length-preserving homeomorphism from $b_i$ to $\Gamma$. A \textbf{union of bands} is the quotient space $Y$ of the disjoint union $\Gamma\sqcup B_1 \sqcup \dots \sqcup B_n$ modulo the union of the $f_{b_i}$'s. 
\end{definition}

\begin{definition}\label{weight}
A band $B=b\times I=[0,a]\times [0,1]$ (or each of its bases) of a union of bands $Y$ is assigned a \textbf{weight} according to the attaching map. $B$ is a weight $0$  band if it is an annulus obtained by attaching $(z,0)$ and $(z,1)$ to the same point of $\Gamma$ for every $z\in b$. $B$ is a weight $\frac{1}{2}$ band if it is a Mobius band obtained by attaching $(z,0)$ and $(a-z,1)$ to the same point of $\Gamma$ for every $z\in b$. Otherwise $B$ is a weight $1$ band. A \textbf{block} is the closure of a connected component of the union of the interiors of the bases, see Figure~\ref{blocks}. The \textbf{complexity of a block} is $\text{max} \{0, -2+\sum {\text{weight}(b)|b\subset \text{block}}\}$. The \textbf{complexity} of $Y$ is the sum of the complexities of its blocks. 
\end{definition}

\begin{figure}[h!]
\begin{tikzpicture}
\footnotesize
\draw[thick] (0,0)--(5,0);
\draw[red][thick](0,-0.05)--(2,-0.05);
\node [below] at (1, -0.05) {$b_1$};
\draw[red][thick](3,-0.05)--(5,-0.05);
\node [below] at (4, -0.05) {$b_3$};
\draw[blue][thick](1,0.05)--(3,0.05);
\node [above] at (2, 0.05) {$b_2$};
\fill (0,0)  circle[radius=2pt];
\fill (3,0)  circle[radius=2pt];
\fill (5,0)  circle[radius=2pt];
\node [left] at (0, 0) {$p_1$};
\node [right] at (5, 0) {$p_3$};
\node [above] at (3, 0) {$p_2$};
\end{tikzpicture}
\caption{\footnotesize Suppose that $b_1, b_2$ and  $b_3$ are three bases. Then the closed segment from $p_1$ to $p_2$ is a block and the closed segment from $p_2$ to $p_3$ is another block.} \label{blocks}
\end{figure}
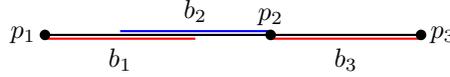

\begin{definition}
A \textbf{leaf} of $Y$ is an equivalence class of the equivalence relation on $Y$ generated by $x\sim x'$ if $\{x,x'\}$ is a vertical subset in $Y$.  
\end{definition}

The decomposition of $Y$ into leaves is a \textit{foliation} with a natural transverse measure, see \cite{ms88}. 
A Cantor-type self map on $Y$ can be constructed such that $Y$ is being laminated as well, see, for example in \cite{hatcher88}. So $Y$ is considered equipped with both measured lamination and measured foliation. We switch freely between these two terms for our advantage.

\begin{remark}
Let $Y$ be a union of  bands constructed by gluing bands to $\Gamma$ where $\Gamma$ is a disjoint union of finite compact simplicial trees. According to \cite[Lemma 6.1]{bfstableactions}, after applying a finite sequence of \textit{moves} on $Y$,  we may assume that $Y$ has the following property.

\vspace{1pc}
$(A1)$: \textit{Its underlying real graph $\Gamma$ is a disjoint union of edges. Each edge is either a block or meets no bands.}

\vspace{1pc}

We will make assumption $(A1)$ for all union of bands throughout the rest of this paper unless otherwise mentioned.
\end{remark}

\begin{definition} 
Let $Y$ be a union of bands with underlying real graph $\G$. For $z\in \G$ denote by $N_{\G}(z,\e)$ the closed $\e$-neighborhood of $z$ in $\G$. If $z\notin \G$ we take $N_{\G}(z,\e)$ to be empty. For a band $B=b \times I$ of $Y$, $z=(z, t)\in b\times I=B$, and $\e>0$, let $N_{B}(z,\e)$ be the closed horizontal segment $\{(x',t)\in B| |x-x'|\leq \e\}$. For $z\in Y$, define $N(z, \e)$ to be the union of $N_{\G}(z,\e)$ and of the $N_{B}(z,\e)$ over all $B$ containing $z$. A subset of $Y$ is \textbf{horizontal} if it is a subset of $\G$ or if it is contained in a single band and is horizontal there. Let $s, s'\subset Y$ be horizontal and let $p$ be a path in a leaf of $Y$. We say $s$ \textbf{pushes into $s'$ along $p$} if there is a homotopy $H$ of $s$ into $s'$ through horizontal sets such that 
\begin{itemize}
\item for every $z\in s$, $H(\{z\}\times I)$ is contained in a leaf, and 
\item there is $z_0\in s$ so that $p(t)=H(z_0, t)$ for all $t\in I$.
\end{itemize}
Given a horizontal $s$ and path $p$ in a leaf with $p(0)\in s$, we say that \textit{$s$ pushes along $p$} if there is a horizontal set $s'$ such that $s$ pushes into $s'$ along $p$. 

As subset $S$ of $Y$ is \textbf{pushing saturated} if given a path $p$ with $p(0)\in S$ and $\e>0$ such that $N(p(0),\e)$ pushes along $p$ then $p(1)\in S$. 
\end{definition}

\begin{definition}
Let $Y$ be a union of bands. A connected component of $Y$ is \textbf{minimal} if every pushing saturated subset contained in that component is dense in $Y$ and is \textbf{simplicial} if every leaf in that component is compact. %A leaf is \textbf{singular} if it contains a proper pushing saturated subset. Otherwise, it is \textbf{nonsingular}.
Based on assumption (A1), we include in this description a special class of simplicial components, \textbf{naked real edges}, which are components of the real graph $\Gamma$ that meets no bands (or what we might call \textit{degenerate bands})
\end{definition}

A priori it is possible that a component of $Y$ is neither minimal nor simplicial. However, with a little extra restriction on leaves of $Y$, a component is either minimal or simplicial.  
%Combining proposition 4.8 and lemma 6.1 in \cite{bfstableactions}, 

\begin{prop}\cite[Proposition 4.8]{bfstableactions}\label{simormin}
Let $Y$ be a union of bands. There are only finitely many isotopy classes of compact leaves in $Y$. Suppose that no leaf of $Y$ has a subset that is proper, compact and pushing saturated. Then, the following holds. 
\begin{itemize}
\item Each component of $Y$ is either simplicial or minimal.
\item Each simplicial component of $Y$ is an $I$-bundle over some leaf in that component.
\item Only finitely many leaves of $Y$ contain a proper pushing saturated subset.
\end{itemize}
\end{prop}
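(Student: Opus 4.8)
The plan is to prove the three assertions by first setting up a local product structure for the leaf decomposition, then organizing the compact leaves into maximal $I$-bundles, and finally running an Imanishi-type argument on what remains. Under (A1) the foliation can fail to be locally a product only at the (finitely many) vertices of $\Gamma$; endpoints of bases map to vertices, so let $\Sigma$ be this finite vertex set and call a leaf \emph{singular} if it meets $\Sigma$. Since each point of $Y$ lies on a unique leaf, there are only finitely many singular leaves $\ell_1,\dots,\ell_m$. For a point $x$ whose leaf avoids $\Sigma$ one obtains a flow-box neighborhood $A\times\tau$ ($A$ a leaf piece, $\tau$ a transversal carrying the transverse measure, the leaves being $A\times\{pt\}$) by pushing a thin horizontal segment along leaf paths, all of which avoid $\Sigma$ and hence stay a positive distance from the ends of bases. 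Because the transverse measure is holonomy-invariant, holonomy along leaf paths avoiding $\Sigma$ is isometric; in particular no leaf can spiral onto a compact leaf disjoint from $\Sigma$ (that would require contracting holonomy), and the foliation can ``branch'' only along $\ell_1,\dots,\ell_m$. The same flow-box remark shows that if $\ell\cap\Sigma=\emptyset$ then every nonempty pushing-saturated subset of $\ell$ equals $\ell$ (any leaf path in $\ell$ admits pushing of a thin neighborhood), so a leaf with a proper pushing-saturated subset is one of $\ell_1,\dots,\ell_m$; this is the third bullet.

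Next I would analyze compact leaves. If $\ell$ is compact with $\ell\cap\Sigma=\emptyset$, compactness lets the flow boxes along $\ell$ patch to an $I$-bundle neighborhood (twisted when $\ell$ has reflection holonomy), every leaf of which is isotopic to $\ell$; enlarge it to the maximal such bundle $\mathcal{B}(\ell)$. Distinct maximal bundles are disjoint, and by isometry of holonomy the only obstruction to enlarging $\mathcal{B}(\ell)$ is that its frontier meets a singular leaf. If there were infinitely many isotopy classes of compact leaves disjoint from $\Sigma$, the corresponding maximal bundles would be disjoint with transverse measures summing to at most the finite total transverse measure of $Y$, hence tending to $0$; a Hausdorff-limit argument then forces infinitely many of them to abut one fixed local sheet of some $\ell_i$ arbitrarily closely, so infinitely many distinct isotopy classes would appear inside a single flow box adjacent to that sheet, which is impossible. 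Together with the finitely many classes of compact leaves that meet $\Sigma$, this gives the first assertion.

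For the dichotomy, let $Y_0$ be a component. If every leaf of $Y_0$ is compact then $Y_0$ is simplicial, and it is covered by the disjoint maximal $I$-bundles of the previous step; a frontier leaf separating two of them would be a singular compact leaf carrying a ``compact part'' whose complement in that leaf is reached only by crossing $\Sigma$, i.e.\ a proper compact pushing-saturated subset of a leaf, which the hypothesis forbids. Hence $Y_0$ is a single $\mathcal{B}(\ell)$, an $I$-bundle over a leaf; this is the second bullet. If instead $Y_0$ contains a non-compact leaf $\ell$, then $\overline{\ell}\cap Y_0$ is a nonempty closed pushing-saturated set, and Zorn's lemma yields a minimal closed pushing-saturated $M\subseteq\overline{\ell}\cap Y_0$. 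By the no-spiraling remark $M$ cannot be a compact leaf disjoint from $\Sigma$; if it met $\Sigma$, the accumulation of $\ell$ onto it would again force a proper compact pushing-saturated subset of a leaf, against the hypothesis; so $M$ is a minimal sublamination with all leaves dense in $M$ and non-compact. Were $M$ proper in $Y_0$, its frontier would likewise produce a forbidden singular leaf, so $M=Y_0$. Since any nonempty closed pushing-saturated subset of $Y_0$ contains a minimal one, necessarily equal to $Y_0$, every pushing-saturated subset of $Y_0$ is dense, i.e.\ $Y_0$ is minimal.

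The main obstacle is the step invoked several times above: showing that the frontier of a maximal $I$-bundle, or of a proper minimal sublamination, or a leaf accumulated on by a non-compact leaf, must carry a proper compact pushing-saturated subset of some leaf. This is a delicate local analysis at the vertices of $\Gamma$ of how the pushing relation degenerates — precisely which thin horizontal neighborhoods fail to push past a branch of a singular leaf — and it is here, rather than in the (routine) flow-box, transverse-measure, and Zorn ingredients, that the hypothesis ``no leaf has a proper compact pushing-saturated subset'' does the essential work.
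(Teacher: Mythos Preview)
The paper does not supply its own proof of this proposition: it is quoted verbatim from \cite[Proposition~4.8]{bfstableactions} and used as a black box, with no argument given. There is therefore nothing in the present paper to compare your proposal against.

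For what it is worth, your sketch follows the standard Imanishi-type line of argument that underlies the original Bestvina--Feighn proof: isolate the finitely many singular leaves (those meeting endpoints of bases), use the transverse measure to rule out spiraling and to build product neighborhoods away from the singular set, organize compact nonsingular leaves into maximal $I$-bundles, and then run a minimality argument on any component containing a noncompact leaf. You are also right to flag as the crux the local analysis at the singular set showing that the frontier of a maximal $I$-bundle, or of a proper minimal sublamination, forces a proper compact pushing-saturated subset of some leaf; this is exactly where the hypothesis is consumed, and it is the part of your write-up that is currently only asserted rather than carried out. If you want to turn this into a complete proof you would need to make that step precise, but that work belongs to the cited reference, not to the paper under review.
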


%\begin{rmk}
%$Y$ can be viewed being laminated as well as foliated, see \cite{hatcher88}. 
%\end{rmk}

\vspace{1pc}

From the construction, the fundamental group of a union of bands $Y$ is a free group. To study actions of a general group, relations are needed. Thus additional cells are attached to $Y$ to build a 2-complex as follows.
%to construct a \textit {band complex} $X$ with the property that the image of all attaching maps are disjoint from a neighborhood of the lamination $\Lambda (Y)$ on $Y$.

\begin{definition}\label{bandcomplex}
A \textbf{band complex} $X$ is a relative \textit{CW} $2$-complex based on a union of bands $Y$.  $X$ is obtained from $Y$ with $0, 1$, and $2$-cells attached such that 
\begin{itemize}
\item only finitely many (closed) cells of $X$ meet $Y$,
\item the $1$-cells of $X$ intersect $Y$ in a subset of $\Gamma$,
\item each connected component of the intersection of $\Gamma$ and a $2$-cell of $X$ is a point, and
\item each connected component of the intersection of a band of $Y$ and a $2$-cell of $X$ is vertical.
\end{itemize}
Intersections between attaching cells and $Y$ are call \textbf{attaching regions}.
\end{definition}
\vspace{1pc}

\begin{notation}
Here is a terminology convention. If $X$ is a band complex then $Y$ is always the underlying union of bands and $\G$ is always the underlying real graph. Similarly, the union of bands for $X'$ is $Y'$, etc. Further, $\overline{Y}$ denotes the union of bands obtained from $Y$ by omitting weight $0$ bands.
\end{notation}
\vspace{1pc}

A \textit{leaf} of a band complex $X$ is a leaf of $Y$.  %we say that $X$ is \textit{minimal} or \textit{simplicial} if $Y$ is minimal or simplicial. More generally, 
Similarly, a minimal or simplicial component of the lamination on $X$ (i.e. $Y$) is called a minimal or simplicial component of $X$ for short. The \textit{complexity} of $X$ is the complexity of $Y$. The transverse measure on $Y$ can then be integrated along a path in $X$. A \textit{generalized leaf} of $X$ is an equivalence class of points of $X$ under the equivalence relation $x\sim x'$ if there is a path in $X$ joining $x$ and $x'$ with measure $0$. $X^*\subset X$ is a \textit{band subcomplex} if $X^*$ is a band complex with underlying real graph $\Gamma^*\subset \Gamma$ and underlying union of bands $Y^*\subset Y$.

\vspace{1pc}
Let $G$ be a finitely presented group and $T$ be a $G$-tree. A band complex associated to $T$ can be obtained in the following way. 

\begin{definition}\label{resolution}
Let $X$ be a 2-complex whose fundamental group is $G$. A \textbf{resolution} for $T$ is a $G$-equivariant map $r: \widetilde{X}\to T$, where $\widetilde{X}$ is the universal cover of $X$, such that the image of a generalized leaf of $\widetilde{X}$ is a point and $r$ embeds the lifts of bases of bands in $X$. We say that $X$ a \textbf{resolving complex}.
\end{definition}

\begin{prop} \cite[Proposition 5.3]{bfstableactions} \label{treehasresolution}
Let $G$ be a finitely presented group. Every $G$-tree $T$ has a resolution.
\end{prop}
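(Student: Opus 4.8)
The plan is to realize a finite presentation of $G$ inside $T$ and read off a band complex from the resulting ``system of partial isometries'', essentially following Morgan--Shalen \cite{ms88}. Fix a finite presentation $G=\langle x_1,\dots ,x_n\mid r_1,\dots ,r_m\rangle$ and a point $v_0\in T$. Let $S\subset G$ be the finite set consisting of $1$, the generators $x_i^{\pm 1}$, and all initial subwords of the relators $r_j$; let $K_0\subseteq T$ be the convex hull of the finite orbit-chunk $\{g\cdot v_0: g\in S\}$, a finite simplicial subtree. Each generator $x_i$ is an isometry of $T$, and restricting it gives a partial isometry $\phi_i\colon K_0\cap x_i^{-1}K_0\to K_0\cap x_iK_0$ between (finite unions of) arcs of $K_0$; by the choice of $S$ each domain is nonempty and large enough that every relator word, read as a composition $\phi_{i_k}^{\pm1}\circ\cdots\circ\phi_{i_1}^{\pm1}$, is defined on a nondegenerate arc of $K_0$, where it is the identity.

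Next I would build $X$ as the suspension of this system of partial isometries, decorated with $2$-cells for the relators. Take the underlying real graph $\Gamma$ to be a copy of (a subdivision of) $K_0$, so that $\Gamma$ maps isometrically onto $K_0\subset T$. For each $i$ and each arc $b$ of $\mathrm{dom}(\phi_i)$, attach a band $B_i^b$ gluing one base to $b\subset\Gamma$ by the identity and the dual base to $\phi_i(b)\subset\Gamma$ by $\phi_i$ (using the orientation of $\phi_i$ to decide the weight, as in Definition~\ref{weight}); this yields a union of bands in the sense of Definition~\ref{union of bands}. For each relator $r_j$, the chosen composition traces out a loop $\ell_j$ inside a single leaf of this union of bands; attach a $2$-cell along $\ell_j$, chosen generically so that it meets $\Gamma$ in points and meets every band in vertical sets, as required by Definition~\ref{bandcomplex}, and finally apply the moves of \cite[Lemma 6.1]{bfstableactions} to arrange assumption $(A1)$. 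This produces a band complex $X$, and a van Kampen argument shows $\pi_1(X)\cong G$: before the $2$-cells are attached the suspension has free fundamental group with one generator per band-family (travelling inside $\Gamma$ costs transverse measure but introduces no loops, since $K_0$ is a tree), and collapsing each band-family $B_i^{\bullet}$ to a single generator $x_i$ identifies this with $F_n$; the $m$ relator $2$-cells then impose exactly $r_1,\dots ,r_m$.

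Then I would produce the resolution $r\colon\widetilde X\to T$. The components of $\widetilde\Gamma$ are naturally indexed by $G$; send the one indexed by $g$ isometrically onto $g\cdot K_0\subset T$, and collapse each vertical fiber of each lift of each band to a point. This is well defined because the two bases of $B_i^b$ are glued by $\phi_i$, the restriction of the isometry $x_i$, so the two prescriptions for the image of a fiber agree after applying $x_i$; and it is $G$-equivariant by construction. Each attaching loop of a lifted relator $2$-cell maps to a loop in $T$, which is null-homotopic since $T$ is contractible, so $r$ extends over the $2$-cells (one choice per $G$-orbit, translated equivariantly). It remains to check the two clauses of Definition~\ref{resolution}. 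A generalized leaf of $\widetilde X$ is a union of vertical fibers and segments of $2$-cells lying in leaves, concatenated end to end, and $r$ is constant on each such piece, so its image is a point; and a base of a band is an arc of $\Gamma$, so a lift of it maps isometrically, in particular injectively, into $T$.

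The main obstacle is the bookkeeping in the middle step: guaranteeing that $\pi_1(X)$ is $G$ on the nose rather than a quotient of it or a group mapping onto it with nontrivial kernel. The delicate points are (i) choosing $S$, hence $K_0$, large enough that every relator is genuinely composable as a sequence of the partial isometries $\phi_i^{\pm1}$ along a common leaf-segment, so the relator $2$-cell can be attached along a loop representing $r_j$; while (ii) verifying that enlarging $K_0$ never forces an unintended relation among the band-generators, which is exactly where one uses that $K_0$ is a tree, so identifications within $\Gamma$ are measure-zero and loop-free, together with the standard fact that the suspension of a system of partial isometries on a finite tree has free fundamental group generated by its bands. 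Everything else---equivariance, well-definedness of the fiber collapse, and the two clauses of Definition~\ref{resolution}---is then immediate from the construction.
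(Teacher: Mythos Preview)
The paper does not actually prove this proposition; it simply cites \cite[Proposition~5.3]{bfstableactions}. Your sketch is the standard Morgan--Shalen/Bestvina--Feighn construction and is essentially correct, so in spirit it matches what the cited reference does.

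There is, however, one genuine gap in your $\pi_1$ computation. You assert the suspension has ``one generator per band-family'', but topologically it has one generator per \emph{band}. The domain $K_0\cap x_i^{-1}K_0$ is a subtree of $T$, and nothing prevents it from branching (for instance if $x_i$ is elliptic and its fixed subtree meets $K_0$ in a branching set). When you subdivide such a domain into arcs you obtain several bands $B_i^{b}$ for the same generator $x_i$, and the union of bands then has free $\pi_1$ of rank strictly larger than $n$; the relator $2$-cells alone will not kill the extra generators. The phrase ``collapsing each band-family $B_i^{\bullet}$ to a single generator'' is not a topological operation on $X$---it is the map $\pi_1(Y)\twoheadrightarrow F_n$, and you have not shown its kernel is normally generated by your relator loops.

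The fix is cheap: whenever two bands $B_i^{b}$ and $B_i^{b'}$ share a boundary vertical fiber (over a branch point of $\mathrm{dom}(\phi_i)$), glue in a bigon $2$-cell along the two fibers; this is a measure-zero loop in a leaf, so Definition~\ref{bandcomplex} is satisfied, and these bigons identify the extra band-generators. Alternatively---and this is what Bestvina--Feighn actually do---start with the presentation $2$-complex $K$ (so $\pi_1(K)=G$ for free), choose any equivariant map $\widetilde K\to T$, make it linear on simplices after subdivision, and read the band-complex structure off the induced foliation. That route sidesteps the $\pi_1$ bookkeeping entirely, which is exactly the obstacle you flagged.
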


For a given $G$-tree $T$, let $X$ a band complex resolving $T$. 
The goal of the Rips machine is to put this arbitrary band complex $X$ into some normal form, further through which one can study structures of $T$ and $G$. There is a list of six moves (M0)-(M5) that can be applied to a band complex described in \cite[section 6]{bfstableactions}. These moves are elementary homotopic moves with respect to the underlying measured lamination. If a band complex $X'$ is obtained from a band complex $X$ by a sequence of these moves, the following holds.
\begin{itemize}
\item There are maps $\phi: X\to X'$ and $\psi: X'\to X$ that induces an isomorphism between fundamental groups and preserve measure.
\item If $f:\widetilde{X}\to T$ is a resolution, then the composition $f\tilde{\psi}:\widetilde{X}'\to T$ is also a resolution, and if $g: \widetilde{X}'\to T$ is a resolution, then so is $g\tilde{\phi}: \widetilde{X} \to T$.
\item $\phi$ and $\psi$ induce a $1-1$ correspondence between the minimal components of the laminations on $X$ and $X'$
\end{itemize}

\vspace{1pc}
We now briefly describe four of these six moves that are extensively used in later sections (following their original labels in \cite{bfstableactions}) .

$\textbf{(M2)}$ \textbf{Subdivide a band}. Let $B=b\times I$ be a band, and let $x$ be a point in the interior of $b$.  
We obtain $Y'$ by splitting $B$ down the vertical fiber $\{ x \} \times I$. There are now two fibers over $x$. To obtain $X'$, first attach a new $2$-cell $D$ to $Y'$ along the two copies of $\{x\}\times I$. Then attach the cells of $X$ to $X'$ so that collapsing $D$ in $X'$ recovers $X$. $D$ is called a \textit{subdivision cell}. 
%$\textbf{(M0)}$ \textbf{Add a 2-cell}. $X'$ is obtained by attaching a $2$-cell along a vertical loop in the relative $1$ skeleton of $X$ (i.e. the union of bands $Y$ and the relative $1$-cells) which is null-homotopic in $X$.
\vspace{1pc}

$\textbf{(M3)}$ \textbf{Cut $\Gamma$}. Let $z$ be a point of $\Gamma$ that does not meet the interior of any base. $\Gamma'$ is $\Gamma$ split open at $z$. Let $Z$ be the set of points in $\Gamma'$ which lies over $z$. Attach to $\Gamma'$ the cone over $Z$. This added a collection of new $1$-cells and a new $0$-cell in $X'$. To complete this move, attach bands and cells to $X'$ so that collapsing the above cone recovers $X$.

\vspace{1pc}

$\textbf{(M4)}$ \textbf{Slide}. Let $B=b\times I$ and $C=c\times I$ be distinct bands in $X$. Suppose that $f_b(b)\subset f_c(c)$. Then a new band complex $X'$ can be created by replacing $f_b$ by $f_{dual(c)}\circ \delta_{C}\circ f^{-1}_c\circ f_b$. This is a slide of $b$ across $C$ from $c$ to $dual(c)$. We say that $c$ is the \textit{carrier}, and b is \textit{carried}. See Figure~\ref{Fgslide}.

\begin{figure}[h!]
   \centering
      \begin{tikzpicture}[thick, scale=0.8]
      \footnotesize
\draw[fill=lightgray] (-4,0)--(-2,0)--(0,2)--(-2,2)--(-4,0);
\draw[fill=lightgray] (-3.5,0)--(-2.5,0)--(-2.5,2)--(-3.5,2)--(-3.5,0);
\node [above] at (-2,0.8) {$C$};
\node [above] at (-3,0.8) {$B$};
\draw  [->](0,1) -- (3,1);
\draw[fill=lightgray] (4,0)--(6,0)--(8,2)--(6,2)--(4,0);
\node [above] at (6,0.8) {$C$};
\draw [fill=lightgray] (6.5,2)--(7.5,2)--(7.5,4)--(6.5,4)--(6.5,2);
\node [above] at (7,2.8) {$B$};
\end{tikzpicture}
\caption{\footnotesize Slide $B$ across $C$.} \label{Fgslide}

\end{figure}
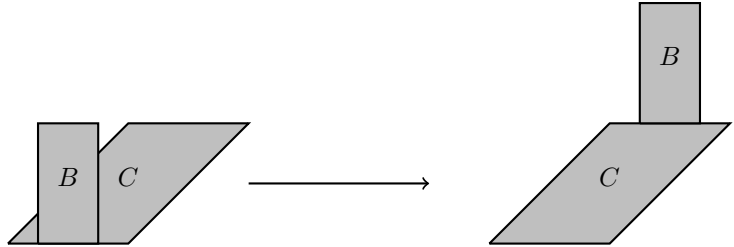

\begin{definition}\label{freesubarc}
Let $B=b \times I$ be a band of $Y$. A subarc $J$ of a base $b$ is \textbf{free} if either 
   \begin{itemize}
      \item $b$ has weight $1$ and the interior of $J$ meets no other base of positive weight, see Figure \ref{freesubarcp}, or
      \item $b$ has weight $\frac{1}{2}$, the interior of $J$ does not contain the midpoint of $b$,  and $b$ and $\text{dual}(b)$ are the only positive weight bases that meet the interior of $J$.
   \end{itemize}
\end{definition}

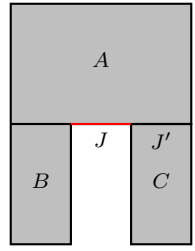
\begin{figure}[h!]
\begin{tikzpicture}[thick, scale=0.8]
\tiny
\draw[fill=lightgray] (0,0)--(3,0)--(3,2)--(0,2)--(0,0);
\draw[fill=lightgray] (0,0)--(1,0)--(1,-2)--(0,-2)--(0,0);
\draw[fill=lightgray] (2,0)--(3,0)--(3,-2)--(2,-2)--(2,0);
\draw[red][thick](2,0)--(1,0);
\draw[thick](2,0)--(3,0);
\node [above] at (1.5, 0.8) {$A$};
\node [above] at (0.5, -1.2) {$B$};
\node [above] at (2.5, -1.2) {$C$};
\node [below] at (1.5, 0) {$J$};
\node [below] at (2.5, 0) {$J'$};

\end{tikzpicture}
\caption{\footnotesize $J$ is a free subarc, whereas $J'$ is not} \label{freesubarcp}
\end{figure}

$\textbf{(M5)}$ \textbf{Collapse from a free subarc}.
Let $B=b\times I$ be a band, and let $J\subset b$ be a free subarc. First use (M2) to subdivide weight $0$ bands meeting $J$, until a base of any such band meeting $J$ is contained in $J$.  In doing this, we may have introduced new subdivision cells into $X$. Then use (M4) to slide the weight $0$ bands that meet $J$ over $B$ to $dual(b)$. Call the result $X^*$.  New subdivision cells introduced by subdivision become annuli in $X^*$, so we call them \textbf{subdivision annuli}. Further, relative $1$- and $2$-cells whose image intersect interior of $J\times[0,1)$, are naturally homotoped upwards to $dual(b)$. 

If $b$ has weight $1$, collapse $J\times I$ to $dual(J)\cup (\partial J\times I)$ to obtain $X'$, see Figure~\ref{fgcollapse}. Typically, the band $B$ will be replaced by two new bands. But if $J$ contains one or both endpoints of $b$, say $x$, then we also collapse $\{x\} \times I$. Thus $B$ is replaced by $1$ or $0$ band. 

If $b$ is of weight $\frac{1}{2}$, we may subdivide $B$ over the end-point of $J$ nearest the midpoint of $b$ such that $J$ is contained in a band of weight $1$ (\cite[Lemma 6.5]{bfstableactions}). Then we can collapse from $J$ as before.

\begin{figure}[h]
   \centering
      \begin{tikzpicture}[thick, scale=0.8]
      \footnotesize
\draw [fill=lightgray](-4,0)--(0,0)--(0,2)--(-4,2)--(-4,0);
\draw (0,-0.5)--(-4,-0.5);
\draw (-4,0) to [out=180,in=180] (-4,-0.5);
\draw (-4,0) to [out=0,in=0] (-4,-0.5);
\draw (0,0) to [out=0,in=0] (0,-0.5);
\draw [dashed](0,0) to [out=180,in=180] (0,-0.5);
\draw[ultra thick][dashed](-3,0)--(-1,0);
\node [above] at (-2,0) {$J$};
\node [above] at (-2,-0.6) {$C$};
\draw[dashed](-3,0)--(-3,2)--(-1,2)--(-1,0)--(-3,0);
\draw[ultra thick] (-3,2)--(-1,2);
\node [above] at (-2,2) {$dual(J)$};

\draw  [->](2,1) -- (3,1);
\draw [fill=lightgray](7,2)--(7,0)--(8,0)--(8,2)--(4,2)--(4,0)--(5,0)--(5,2);
\draw[thick] (5,2)--(7,2);
\draw (4,-0.5)--(5,-0.5);
\draw (7,-0.5)--(8,-0.5);
\draw (5,2.5)--(7,2.5);
\draw (4,0) to [out=180,in=180] (4,-0.5);
\draw (4,0) to [out=0,in=0] (4,-0.5);
\draw [red](5,0) to [out=0,in=0] (5,-0.5);
\draw [red][dashed](5,0) to [out=180,in=180] (5,-0.5);
\draw [blue](7,0) to [out=180,in=180] (7,-0.5);
\draw [blue](7,0) to [out=0,in=0] (7,-0.5);
\draw (8,0) to [out=0,in=0] (8,-0.5);
\draw [dashed](8,0) to [out=180,in=180] (8,-0.5);

\draw [red](5,2.5) to [out=180,in=180] (5,2);
\draw [red](5,2.5) to [out=0,in=0] (5,2);
\draw [blue][dashed](7,2.5) to [out=180,in=180] (7,2);
\draw [blue](7,2.5) to [out=0,in=0] (7,2);
\end{tikzpicture}
\caption{\footnotesize Collapse from a free subarc $J$: first use (M2) to subdivide the weight $0$ band $C$ into three bands and slide the middle one across $B$. Subdivison annuli are attached between two red loops and two blue loops. Then collapse $J\times I$ to $dual(J)\cup (\partial J\times I)$ } \label{fgcollapse}

\end{figure}
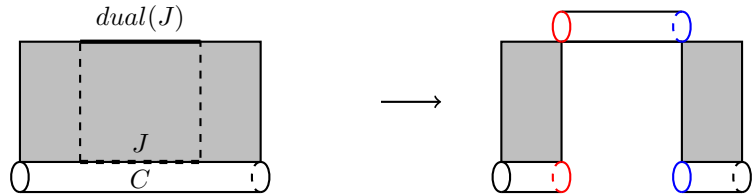

\vspace{1pc}
In this paper, we will need a few additional moves listed below.

\textbf(M6) \textbf{Attach a band}: Glue $b\times I$ to $X$ via a measure-preserving map from $b \times \partial I$ to its underlying real graph $\Gamma$ that transverse to the measured lamination.
\vspace{1pc}

\textbf(M7) \textbf{Adding an arc to the real graph}: Add an extra segment $c$ of the real line to $\Gamma$. A special case is enlarge $\Gamma$ by gluing one end point of $c$ to an end point of $\Gamma$.
 \vspace{1pc}  

\textbf(M8) \textbf{Attach a disk} \footnote{This is a general version of (M$0'$) in \cite{bfstableactions}. }: Attach a 2-disk $D$ to $X$ along a loop in the relative $1$ skeleton of $X$ (i.e. the union of bands $Y$ and $1$-cells in $X$) whose measure is zero. 
\vspace{1pc}
\begin{remark}\label{newmoves}
Note that unlike the moves in \cite{bfstableactions}, (M6)-(M8) may change the fundamental group of $X$. But we never perform such moves alone. For example, right after a (M6) - attach a band, we always perform some (M8) - attach a disk (multiple times if needed) to make sure that the resulting band complex $X'$ has the same fundamental group as $X$ and remains resolving the same tree as $X$.
\end{remark}

\begin{prop}\cite[Lemma 6.1]{bfstableactions}\label{nofault}
By applying a sequence of moves to a band complex $X$, we may arrange that no leaf of the complex has a subset that is proper, compact, and pushing saturated.
\end{prop}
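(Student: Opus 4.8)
The statement to prove is Proposition~\ref{nofault}: by applying a sequence of moves to a band complex $X$, one may arrange that no leaf of the complex has a subset that is proper, compact, and pushing saturated. This is quoted from \cite[Lemma 6.1]{bfstableactions}, so the plan is to reconstruct the argument there. The plan is to first reduce to a statement about the underlying union of bands $Y$ (the offending subsets live in leaves of $Y$, and the relevant moves (M2)--(M5) together with assumption $(A1)$ only concern $Y$), and then to run an induction on a complexity pair. The key observation is that a proper, compact, pushing-saturated subset $S$ of a leaf $\ell$ gives rise to a "free" situation: because $S$ is pushing saturated and proper in $\ell$, there is a point of $\ell$ on the frontier of $S$ where $S$ fails to push any further, and chasing this through the definition of pushing (via $N(p(0),\e)$ pushing along $p$) produces a free subarc $J$ of some positive-weight base $b$ of a band $B$, in the sense of Definition~\ref{freesubarc}.

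Once a free subarc $J$ is located, the plan is to apply (M5), collapse from a free subarc, along $J$. First I would invoke Proposition~\ref{simormin}-type bookkeeping (only finitely many isotopy classes of compact leaves, only finitely many leaves carrying a proper pushing-saturated subset) to see that there are only finitely many "bad" leaves to deal with at any stage, so the process is genuinely finite rather than transfinite. Then I would show that (M5) applied along $J$ strictly decreases a suitable complexity of $Y$ -- the natural candidate being the complexity of Definition~\ref{weight} (the sum over blocks of $\max\{0, -2 + \sum \mathrm{weight}(b)\}$), or failing that, the pair (complexity, number of positive-weight bands) ordered lexicographically, since collapsing from a free subarc either removes the band $B$, or splits it into fewer positive-weight pieces, or at worst trades it for bands whose bases contribute strictly less. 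The moves (M2), (M3), (M4) used as preliminary steps inside (M5) do not increase this complexity, and (A1) can be restored by \cite[Lemma 6.1]{bfstableactions} (the $(A1)$ reduction) without increasing it either. Since the complexity is a nonnegative integer (or a pair well-ordered by the lexicographic order), the process terminates, and when it terminates there is no free subarc, hence -- by the contrapositive of the first step -- no leaf has a proper, compact, pushing-saturated subset.

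The main obstacle I expect is the first step: carefully extracting a free subarc from an arbitrary proper, compact, pushing-saturated subset $S$ of a leaf. One must argue that the compactness and properness of $S$ force a "boundary" phenomenon -- a point $x \in \ell \setminus S$ that is a limit of points of $S$, together with a path $p$ from $S$ to $x$ along which arbitrarily small horizontal neighborhoods of the starting point push all the way to $x$ but $x \notin S$ would contradict pushing-saturation -- and then translate "the push gets stuck near $x$" into the combinatorial statement that some positive-weight base has a subarc whose interior meets no other positive-weight base, i.e.\ is free. Handling the weight-$\tfrac12$ (Mobius) case requires the extra care already flagged in Definition~\ref{freesubarc} and in the weight-$\tfrac12$ clause of (M5). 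A secondary, more routine, obstacle is verifying the complexity is genuinely monotone under (M5): one must check the effect on blocks (Definition~\ref{weight}) when $B$ is collapsed, including the degenerate cases where $J$ contains one or both endpoints of $b$ so that $B$ is replaced by one or zero bands, and confirm that the subdivisions and slides performed as sub-steps of (M5), as well as the subsequent restoration of $(A1)$, never push the complexity back up.
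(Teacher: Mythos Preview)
Your plan has a genuine gap at the very first step, and it is not the approach the paper (following \cite[Lemma 6.1]{bfstableactions}) uses.

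The claim that a proper, compact, pushing-saturated subset $S$ of a leaf $\ell$ produces a \emph{free subarc} is not correct. Look at what ``the push gets stuck'' actually means. If $x\in\ell$ is on the frontier of $S$ in $\ell$, then for every $\e>0$ the set $N(x,\e)$ fails to push along some leaf-path starting at $x$. Chasing the definition, this happens because the leaf-path runs into a \emph{vertical boundary fiber} of some band $B=b\times I$ (a fiber over an endpoint of $b$): that is where the horizontal neighborhood $N(\,\cdot\,,\e)$ stops being carried along. This is a phenomenon at the \emph{edge} of a band, not at a place where a base is unmatched by other positive-weight bases. A free subarc in the sense of Definition~\ref{freesubarc} is a horizontal statement about how bases overlap on $\Gamma$; it has nothing to do with where leaf-segments terminate. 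There is no mechanism by which the frontier of $S$ manufactures a free subarc, so your (M5)-based induction never gets started.

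Even if you could find free subarcs, the termination argument fails: (M5) is only non-increasing on complexity, and Process~I (which is exactly ``repeatedly collapse from free subarcs'') can run forever --- that is precisely the thin case. So ``complexity drops, hence terminates'' is false for this move.

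The argument the paper points to is different and simpler. As noted in Section~\ref{RRM} of the paper, each proper, compact, pushing-saturated subset must contain at least one vertical fiber in the boundary of $Y$; since there are only finitely many boundary vertical fibers, there are only finitely many such bad subsets. One then removes them with (M2) and (M3): subdivide the offending bands along those boundary fibers and cut $\Gamma$ at the corresponding points, so that what was a proper pushing-saturated piece of a leaf becomes an entire leaf of the new complex. This is a finite procedure by construction, with no complexity induction needed, and no appeal to (M5) or free subarcs.
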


\vspace{1pc}
%We now briefly review the Rips machine. 

Proposition~ \ref{simormin} and Proposition ~\ref{nofault} imply the following. After possibly a sequence of moves, we may always assume that each component of the measured lamination on $X$ is either simplicial or minimal. The Rips machine is designed to study one minimal component at a time. It consists of two processes (made of sequence of moves): Process~I and Process~II. Neither process increases complexity. 

Roughly speaking, each step of Process~I is a (M5)-collapse a free subarc (see details in Section~\ref{thinreview}); each step of Process~II is a combination of a (M4)-slide and a (M5)-collapse a free subarc (see details in Section~\ref{processII}). 

Fix a minimal component $Y_0$ of $Y$. First, Process I is successively applied until no base of $Y_0$ has a free subarc. It is possible that Process I continues forever. In this case, as a machine output, we say that $Y_0$ is of \textbf {thin type}  (also called \textbf{Levitt} or \textbf{exotic type}). If no base has a free subarc then Process II is successively applied until the complexity of $Y_0$ decreases then go back to Process I. It is possible that Process II continues forever. In this case, as a machine output, $Y_0$ is either of \textbf{surface type} or of \textbf{toral type} (also called \textbf{axial type}) (depending on $Y_0$'s local structure).
Thus eventually only one of the processes is applied, and it continues forever. After knowing the type of $Y_0$, we continue by choosing another component in $Y$.

\begin{definition}\label{standardform}
In applying the Rips machine to $Y_0$, we will obtain an infinite sequence $Y_0,Y_1\dots$ where each component is obtained from the previous by a process. Further, there exists an integer $N$ such that only a unique process (either Process~I or Process~II) is applied after $Y_N$ and $Complexity(Y_n)=Complexity(Y_{n+1})$ for all $n>N$. For $n>N$, we say that $Y_n$ is in a \textbf{standard form}. A band complex is \textbf{standard} if its every minimal component is standard. 
\end{definition}

\begin{definition}\label{index}
   Given a union of bands $Y$, let $q\in Y$ be a point and $l_{q}\subset Y$ be the leaf containing $q$. Each component $d_p$ of $l_q-{q}$ is called a \textbf{direction} of $Y$ at $q$. A direction is \textbf{infinite} if the corresponding component of a leaf is infinite. In particular, for every $q\in \Gamma$, a band in $Y$ containing $q$ determines a unique direction at $q$. Denote the \textbf{direction set} at $q$ by $T_{q}Y$. %For a given a morphism between union of bands $\i:Y\to Y'$, we define the \textbf{derivative} map $D_{q}\i: T_{q}Y\to T_{\i(q)}Y$ by defining $D_{q}\i (d_q)=\i(d_q)$ for each direction $d_q$ at $q$, where $\i(d_q)$ represents the direction at $\i(q)$ containing the image of $d_q$ in $Y'$.
   
   For each $q\in \overline{Y}$, %we define its \textbf{valence} by $$v_{Y}(q)=\# \{\text{directions at $q$}\}.$$
   we define its \textbf{index} by $$i_{Y}(q)=\# \{\text{infinite directions at $q$}\}-2.$$ The \textbf{limit set} $\Omega_{Y}$ of $Y$ is the set of points in $\overline{Y}$ whose index is at least zero. Its intersection with the real graph (i.e. $\Omega_{Y}\cap \Gamma$) is called the \textbf{limit graph}.
\end{definition}

%\begin{remark}
   %It is clear from the definition that $v_Y(q)-2\geq i_{Y}(q)$, and there are cases the inequality holds. For example, $q$ is a interior point of a band  in a thin component where $l_q$ is q.i. to a ray.
%\end{remark}

It is proved in \cite[section 8]{bfstableactions} that each type of component in $Y$ can be characterized in the following fashion.

\begin{prop}\label{character} 
Let $X$ be a band complex and $Y$ be the underlying union of bands. Application of the Rips machine puts $X$ into a \textit{standard form}. Each component $Y_0$ of $Y$ belongs to one of the four types: \textit{simplicial, surface, toral and thin}. 
Moreover, for a fixed component $Y_0$, let $\Omega_{Y_0}$ be its limit set (i.e. $\Omega_{Y}\cap Y_0$) and $\Gamma_{Y_0}=\Gamma\cap Y_0$ be its underlying real graph. Then 

\begin{enumerate}
\item $Y_0$ is a simplicial component if and only if $\Omega_{Y_0}$ is empty;

\item $Y_0$ is a toral component with rank $n>2$ if and only if $\Omega_{Y_0}$ contains infinitely may points of positive index;

\item $Y_0$ is a surface component if and only if $i_Y(q)=0$ for every interior point $q$ in $Y_0$ and the closure of $\Omega_{Y_0}$ is $Y_0$;

\item $Y_0$ is a thin component if and only if there are finitely many points in $\Omega_{Y_0}$ have positive index and $\Omega_{Y_0}\cap \Gamma_{Y_0}$ is a dense $G_{\delta}$ set in $\Gamma_{Y_0}$.

\end{enumerate}

\end{prop}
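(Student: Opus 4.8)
The plan is to read off each characterization from the normal form produced by the Rips machine, following \cite[Section 8]{bfstableactions}. First I would put $Y_0$ into a standard form (Definition~\ref{standardform}), so that from some stage on exactly one of Process~I, Process~II is applied forever --- or neither, in which case $Y_0$ is simplicial by Proposition~\ref{simormin} together with Proposition~\ref{nofault}. By the discussion in Section~\ref{background}, Process~I running forever yields the thin type and Process~II running forever yields the surface or toral type. I would then: (a) for each of the three minimal types, compute $\Omega_{Y_0}$ and the index function $i_{Y_0}$ directly from the explicit normal form, which gives the ``only if'' direction of clauses (2)--(4); and (b) deduce clause (1) and all the ``if'' directions from (a), using that the four cases are exhaustive.

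For step (a): when $Y_0$ is simplicial every leaf is compact, so no point has an infinite direction, $i_{Y_0}\equiv-2$, and $\Omega_{Y_0}=\emptyset$. When $Y_0$ is thin, in the standard form (Appendix~\ref{thin}) $\overline{Y_0}$ is carried by a single band and the limit graph $\Omega_{Y_0}\cap\Gamma_{Y_0}$ is precisely the set of points of $\Gamma_{Y_0}$ surviving every successive (M5)-collapse of a free subarc; since each collapse deletes an open subarc, the survivors form a countable intersection of dense open sets, hence a dense $G_{\delta}$, while the points of positive index stay confined to the fixed finite cell data of $\overline{Y_0}$ and so are finitely many. When $Y_0$ is a surface component, $\overline{Y_0}$ is dual to a minimal measured foliation of a compact surface with boundary, so an interior point sits on a leaf that is locally an arc with both ends infinite (by minimality), giving $i_{Y_0}=0$ there, and almost every leaf being dense gives $\overline{\Omega_{Y_0}}=Y_0$. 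When $Y_0$ is toral, $\overline{Y_0}$ is the explicit axial model (finitely many bands over a segment with rotation-like attaching maps), whose limit set contains densely many branch points of leaves --- in particular infinitely many points of positive index --- and the rank $n>2$ is the rank of the group of transverse measures of arcs, read off from the model.

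For step (b): clause (1) follows because each of the three minimal normal forms above has $\Omega_{Y_0}\neq\emptyset$ (a dense limit graph for thin, $\overline{\Omega_{Y_0}}=Y_0$ for surface, infinitely many positive-index points for toral), while the simplicial case gives $\Omega_{Y_0}=\emptyset$. For the ``if'' directions: a standard component whose limit data satisfies one of the right-hand sides is one of the four types, and comparing with (a) one rules out the other three, forcing the stated type. This comparison is where the real content lies: one must check that the ``dense $G_{\delta}$ of points of index $\ge 0$'' appearing for thin components is genuinely distinct from the limit graphs of surface and toral components (these contain nonempty open subsets of $\Gamma_{Y_0}$, whereas the thin limit graph has empty interior there), and that the toral model --- having infinitely many interior points of positive index --- is separated both from the surface model (interior index $0$) and from the thin model (only finitely many positive-index points).

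The hard part will be exactly this separation along the surface/toral boundary: showing, from the description of the output of Process~II (Section~\ref{processII}), that a surface component has \emph{no} interior point of positive index while a toral component has infinitely many. That is the point-by-point local analysis carried out in \cite[Section 8]{bfstableactions}, combined with the bookkeeping that a single step of Process~II creates or destroys interior branching only in the controlled ways that distinguish the two outputs. A secondary technical input is the index/Euler-characteristic estimate underlying ``$Y_0$ minimal $\Rightarrow\Omega_{Y_0}\neq\emptyset$'', which one extracts from Proposition~\ref{simormin} (finitely many isotopy classes of compact leaves, and only finitely many leaves containing a proper pushing-saturated subset).
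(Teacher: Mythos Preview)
The paper does not give a proof of this proposition; the sentence immediately preceding it reads ``It is proved in \cite[section 8]{bfstableactions} that each type of component in $Y$ can be characterized in the following fashion,'' and the result is simply quoted. So there is no in-paper argument to compare against: your proposal is an attempt to unpack that citation.

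As such a sketch, your outline is on the right track --- put each component in standard form, compute $\Omega_{Y_0}$ and the index function type by type, then separate the cases --- and this is indeed the shape of the argument in \cite{bfstableactions}. Two points in your thin-type discussion are inaccurate, however. First, it is not true that in standard form ``$\overline{Y_0}$ is carried by a single band'': Proposition~\ref{thinprop}(2) only bounds the number of generalized bands by $6\mathcal{C}+1$, and Appendix~\ref{thin} works with this finite collection (short bands, long bands, islands), not with a single band. Second, your $G_\delta$ argument is backwards: after deleting an open free subarc what \emph{remains} is closed, so the set of points surviving every collapse is a countable intersection of closed sets, not of dense open sets. The limit graph $\Omega_{Y_0}\cap\Gamma_{Y_0}$ is moreover defined via the index (points with at least two infinite directions), not literally as the set of survivors of the collapses, and the $G_\delta$ structure comes from writing ``has two infinite directions'' as a countable intersection of open conditions on how far the leaf extends; the connection with the collapse sequence is the content of Proposition~\ref{thinprop}(3), not a definition. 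These are fixable, but as written that paragraph would not go through.
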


Associated to a band complex is a $GD$ which is a generalization of the $GAD$ defined in \cite{bfnotes}.

\begin{definition}
   A \textit{generalized decomposition}, or $GD$ for short, is a graph of groups presentation $\Delta$ where some vertices have certain extra structure. Namely, the underlying graph is bipartite with vertices in one class called \textit{rigid} and vertices in the other called \textit{foliated}. Further each foliated vertex has one of four types: \textit{simplicial, toral, surface} and \textit{thin}.
\end{definition}

A band complex $X$ is naturally a graph of spaces where vertex spaces are components of $Y$ (with attaching $2$-cells that is glued along its leaves) and the closures of connected components of the complements of $Y$. An edge corresponds to a connected component of the intersection of the closures of sets defining two vertices. The $GD$ associated to $X$, denoted by $\Delta(X)$, is the graph of groups decomposition coming from this graph of spaces.

\section{Overview of relative Rips machine}\label{RRM}

Given a group acting on an $\R$-tree, one may ask how its subgroups act on their corresponding minimal subtrees. In Section~\ref{RRM} and Section~\ref{process}, we will construct a relative version of the Rips machine to study such subgroup actions. The relative Rips machine takes as input a \textit{pair} of band complexes (see Definition~\ref{pair}): two band complexes $X$ and $X'$ with a \textit{morphism} $\i: X\to X'$, denoted by $(X\overset{\i}{\to}X')$. The goal of this machine is to understand the relation between structures of $X$ and $X'$ by putting both $X$ and $X'$ into some normal forms simultaneously. As with the Rips machine, this relative Rips machine studies one pair of components $(Y_0\overset{\i_0}{\to}Y'_0)\subset (Y\overset{\i}{\to}Y')$ at a time. To be more precise, for fixed minimal component $Y'_0$ of $Y'$, let $Y_0$ (possibly simplicial) be a component of $Y$ such that $\i(Y_0)\subset Y'_0$ and $\i_0$ be the restriction of $\i$ on $Y_0$. In general, for a fixed $Y'_0$, there are finitely many choices for $Y_0$. To simplify the notation, we will first work on the special case where there is a unique choice for $Y_0$. Then in Section~ \ref{generalcase}, we will justify the machine described under this special setting for the general case.

\begin{definition}\label{bandmorphism}
Let $X$ and $X'$ be band complexes.  A map between their underlying real graphs $\i: \Gamma\to \Gamma'$ is a \textit{morphism} if every edge of $\Gamma$ has a finite subdivision such the restriction of $\i$ to each segment of the subdivision is an isometry. A map between their unions of bands $\i:Y\to Y'$ is a \textit{morphism} if it restricts to a morphism of real graphs and takes each band of $Y$ homeomorphically to a subband of $Y'$. Finally, a relative cellular map $\i: X\to X'$ is called a \textbf{morphism} if its restriction to the underlying union of bands is a morphism. An inclusion of a band subcomplex into a band complex is an example of morphism.

%A morphism $X\to X'$ induces a cellar map $\phi: \Delta(X)\to \Delta(X')$ between $GD$'s. We say that a vertex of $\Delta(X)$ \textit{changes types under} the map $\phi$ if the vertex and its image have different types. (For example a simplicial component maps into a toral component.)

%For a vertex $v$ of $\Delta(X)$, write $\pi(v)$ for the conjugacy class of the subgroup of $\pi(X)$ generated by the corresponding union of bands. We say $v$ \textit{gets bigger} if the stabilizer of the image of $v$ strictly contains the image of the stabilizer of $v$, i.e. $\phi(\pi(v))\subset \pi(\phi(v))$.   For example a simplicial component with stabilizer $\Z$ maps onto itself as a 2-sheeted cover.
\end{definition}

\begin{definition}
Let $Y$, $Y'$ be two unions of bands and $\i: Y\to Y'$ be a morphism. For a point $q\in Y$, $\i$ is \textbf{locally injective} near $q$, if there exists a neighborhood of $q$ in $Y$ such that the restriction of $\i$ to that neighborhood is injective. $\i$ is an \textbf{immersion} if $\i$ is locally injective near any point $q\in Y$. Further let $p=\i(q)\in \i(Y)\subset Y'$. $\i$ is \textbf{locally surjective} near $q$, if for any neighborhood $U_q$ of $q$, there is a neighborhood $U_p$ of $p$ with the property that $U_p \subset \i(U_q)$. $\i$ is a \textbf{submersion} if $\i$ is locally surjective near any point $q\in Y$. We say $\i$ is a \textbf{local isomorphism} if $\i$ is an immersion and also a submersion. 
%Let $Y$ be a union of bands. For a point $p\in Y$, define $N_{Y}(p, \e)=\{x\in Y| |x-p| \leq \e \}$ to be the \textbf{$\e$-neighborhood} of $p$ in $Y$.  Further let $\i: Y\to Y'$ be a morphism between unions of bands. For a point $p\in Y$, $\i$ is \textbf{locally injective} near $p$, if there exists an $\e$-neighborhood of $p$ in $Y$ such that the restriction of $\i$ to that neighborhood is injective. $\i$ is an \textbf{immersion} if $\i$ is locally injective near every point $p\in Y$. 

%Denote by $p'=\i(p)$ the image of $p$ in $Y'$. $\i$ is \textbf{locally surjective} near $p$, if there exists $\e>0$ such that for any $\e'<\e$, the $\e'$-neighborhood $N_{Y}(p, \e')$ of $p$ in $Y$ maps onto the $\e'$-neighborhood $N_{Y'}(p', \e')$ of $p'$.
%has a corresponding neighborhood $U_q$ of $q$ with the property that $U_p = \i(U_q)$. 
%$\i$ is a \textbf{submersion} if $\i$ is locally surjective near every point $p\in Y$. We say $\i$ is a \textbf{local isometry} if $\i$ is both an immersion and a submersion. 
\end{definition}

\begin{definition}\label{pair}
 Given two band complexes $X$ and $X'$, we say they form a \textbf{pair} if there is a morphism $\i: X\to X'$, denoted by $(X\overset{\i}{\to}X')$. Correspondingly, $(Y\overset{\i}{\to}Y')$ is a \textit{pair of unions of bands}. $(Y_0\overset{\i_0}{\to}Y_0')$ is a \textit{pair of components} where $\i_0=\i|_{Y_0}$, $Y_0\subset Y$ and $Y'_0\subset Y'$ are components with the property that $\i_0(Y_0)\subset Y'_0$. 
\end{definition}

\begin{definition}\label{pairoftrees}
  Let $H<G$ be two finitely presented groups, $T_G$ be a $G$-tree, $T_H\subset T_G$ be a minimal $H$-subtree. $(T_H\hookrightarrow T_G)$ is called a \textbf{pair of trees}. Further let $(X_H\overset{\i}{\to}X_G)$ be a pair of band complexes. We say $(X_H\overset{\i}{\to}X_G)$ resolves $(T_H\hookrightarrow T_G)$ if there exist resolutions $r_G:\widetilde{X}_G\to T_G$ and $r_H:\widetilde{X}_H\to T_H$ such that the following diagram commutes.
      
 $$\begindc{\commdiag}[45]
\obj(0,0)[XH]{$\widetilde{X}_H$}
\obj(1,0)[TH]{$T_H$}
\obj(0,1)[XG]{$\widetilde{X}_G$}
\obj(1,1)[TG]{$T_G$}
\mor{XH}{TH}{$r_H$}
\mor{XH}{XG}{$\tilde{\i}$}
\mor{TH}{TG}{}[\atright,\injectionarrow]
\mor{XG}{TG}{$r_G$}
\enddc  $$

   \end{definition}

\begin{remark}
Similarly as for all moves in the Rips machine, all moves that will be described for pairs of band complexes obey the following rule. %Suppose that $H<G$ are finitely presented groups, $T_G$ is a $G$-tree and $T_H<T_G$ is a minimal $H$-tree. 
If $(X\overset{\i}{\to}X')$ and $(X^*\overset{\i}{\to}{X'}^*)$ are related by a move and $(X\overset{\i}{\to}X')$ resolves $(T_H\hookrightarrow T_G)$ then $(X^*\overset{\i}{\to}{X'}^*)$ also resolves $(T_H\hookrightarrow T_G)$. %The goal is to use the moves to put a pair of band complexes into a normal form while it still resolves the same pair of trees.
\end{remark}

We now discuss some further assumptions that one can make for a pair $(X\overset{\i}{\to}X')$. 
 
 \vspace{1pc}

Given a band complex $X$, according to \cite[Lemma 6.1]{bfstableactions}, we may always arrange that no leaf of $X$ has a subset that is proper, compact, and pushing saturated by a sequence of moves. This further allows us to make the assumption that its underlying union of bands $Y$ is a disjoint union of components. In fact, we can arrange this assumption for a pair of band complexes $(X\overset{\i}{\to}X')$ as well. There are only finitely many proper, compact, pushing saturated subsets of leaves in $X$ and $X'$ (since each of such subsets contains at least one vertical fiber in the boundary of $Y$). Firstly, we may remove all such ``bad'' subsets of leaves in $X$ by (M2) and (M3) as in \cite[Lemma 6.1]{bfstableactions}. Denote the resulting band complex by $X^*$. A morphism $\i^*:X^*\to X'$ is defined as the following. On the level of union of bands, $\i^*$ is the composition of the inclusion $Y^*\hookrightarrow Y$ and $\i:Y\to Y'$ which remains an morphism. $\i^*$ maps subdivision annuli newly created by (M2) and cones in $\Gamma(X^*)$ newly created by (M3) to the $\i$-images of the corresponding split vertical fibers and split vertices. And $\i^*$ remains the same as $\i$ on all the other attaching cells. Secondly, we may then remove such ``bad'' subsets in $X'$ by (M2) and (M3) to obtain $X'^*$ and these moves are disjoint from $\i^*(X^*)$ since leaves of $X^*$ no long contain any ``bad'' subsets. $(X^*\overset{\i}{\to}X'^*)$ is a pair satisfying the desired assumption. Hence, from now on we will assume that for a pair of band complexes $(X\overset{\i}{\to}X')$, the following holds.
 
   \begin{itemize}
      \item $(A2)$: No leaf of $X$ or $X'$ has a subset that is proper, compact, and pushing saturated. In particular, their underlying unions of bands $Y$ and $Y'$ are unions of disjoint components.
   \end{itemize}

% The most important example of a pair of band complexes for us will be $(X_H\overset{\i}{\to}X_G)$ constructed as the following. Recall that $X_H$ is a band complex resolves a minimal $H$-subtree $T_H (\subset T_G)$, and $X_G$ is a band complex resolving a $G$-tree $T_G$ that constructed from $X_H$ by adding bands and 2-cells. We may always first put $X_H$ in the standard form, then construct $X_G$. So we will make the following assumption to a given pair of band complexes $(X\overset{\i}{\to}X')$.
  %\begin{itemize}
      %\item $(A4)$: $X$ is in the standard form.
   %\end{itemize}

%\begin{prop}
  % Let $X$ and $Y$ be union of bands and $\i: Y\to X$ be a morphism, then there is a sequence of moves and ``fold" such that the resulting union of bands $Y'\to X$ is a pair. \pw{need this anywhere?}
%\end{prop}
%Proof. Mimic the proof in graphs. 
%\pw{Not needed in the main proof. Have not checked in detail yet.}

%Let $\BB=b\times I_n$ be a generalized band and $c\subset b$ be a sub segment. $\CC=c\times I_n$ is called a \textbf{sub-generalized band} of $\BB$. 
\vspace{1pc}

In Definition ~\ref{bandmorphism}, a morphism $\i:X\to X'$ takes each band of $Y$ homeomorphically to a subband of $Y'$. In general, it may not hold on the level of generalized bands. The following lemma allows us to work only with generalized bands.

\begin{prop}\label{morphsimongb}
Let $(Y\overset{\i}{\to}Y')$ be a pair of union of bands. $\i$ can be converted into a morphism that takes each generalized band of $Y$ homeomorphically to a sub-generalized band of $Y'$ by a finitely sequence of moves.
\end{prop}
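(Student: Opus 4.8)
The plan is to reduce the statement to the analogous fact for ordinary band complexes (the passage from bands to generalized bands carried out in \cite[Section 6]{bfstableactions}), performed now \emph{simultaneously} on source and target in a way compatible with $\i$. Recall that a generalized band of $Y'$ is, up to the moves (M2)--(M5), a maximal union of bands of $Y'$ glued along free subarcs; the obstruction to $\i$ taking a generalized band homeomorphically onto a sub-generalized band is that a single generalized band $\mathcal B$ of $Y$ may have its constituent bands sent by $\i$ into several different generalized bands of $Y'$, or a proper part of one. First I would analyze, for each band $B$ of $Y$ and each base $b$ of $B$, how $\i(b)\subset\Gamma'$ sits relative to the block decomposition of $\Gamma'$: since $\i$ is a morphism, $\i(b)$ is a concatenation of finitely many segments each lying in a single base of $Y'$. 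The idea is to subdivide: apply (M2) to $B$ (and, equivariantly, to the bands of $Y'$ meeting $\i(b)$) along the finitely many points where $\i(b)$ crosses from one base of $Y'$ to another, or crosses an endpoint of a base. Because (M2) on a band of $Y'$ in the image forces a matching subdivision cell, and (M2) upstairs in $Y$ forces a subdivision cell there, one must do these in the coordinated fashion already set up in the discussion of assumption $(A2)$ in the excerpt: subdivide $X$ first, extend $\i$ over the new subdivision annuli by sending them to the $\i$-images of the split fibers, then subdivide $X'$ along the (now finitely many, disjoint-from-image-complicated) remaining points.

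After this coordinated subdivision, every band of $Y$ maps into a \emph{single} base of $Y'$, hence into a single band of $Y'$. Next I would pass to generalized bands: a generalized band $\mathcal B$ of $Y$ is assembled from bands $B_1,\dots,B_k$ glued along free subarcs of the $\G$-base; one needs $\i(\mathcal B)$ to be a sub-generalized band of $Y'$, i.e. the images $\i(B_1),\dots,\i(B_k)$ to lie in one generalized band $\mathcal B'$ of $Y'$ and to be glued there in the compatible pattern. Since the gluing of the $B_i$ happens along free subarcs and $\i$ is locally injective on $Y$ (it takes each band homeomorphically onto a subband, and by $(A2)$ there are no bad saturated subsets), the free subarc along which $B_i$ and $B_{i+1}$ are glued is carried by $\i$ to a subarc of $\Gamma'$; the point is to arrange by further (M2)/(M3)/(M4) moves (as in the classical reduction to generalized bands) that this image subarc is free in $Y'$ too, or lies inside a single generalized band of $Y'$ after its own reduction. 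Running the classical ``clean-up to generalized bands'' procedure of \cite{bfstableactions} on $Y'$, and pulling the required subdivisions back through $\i$ exactly as above, yields the claim; one checks at each step that the move on $X'$ is performed along sets disjoint from, or controlled by, $\i(X)$, so that $\i$ extends to the new pair and the resolved pair of trees $(T_H\hookrightarrow T_G)$ is unchanged.

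The main obstacle I anticipate is \textbf{keeping the reduction finite and confluent on both sides at once}. On a single band complex, reducing to generalized bands terminates because each (M2) is paid for by a complexity bookkeeping argument; here, a subdivision of $Y'$ demanded to make $\i(\mathcal B)$ a sub-generalized band may create new bands of $Y'$, which in turn may force new subdivisions of $Y$ (to keep $\i$ a morphism taking bands to single bases), which may then force further subdivisions of $Y'$ — one must rule out an infinite ping-pong. I would handle this by noting that the ``crossing points'' that drive all the subdivisions are pulled back from a \emph{fixed finite} set in $Y'$ — namely the endpoints of bases of $Y'$ together with, after the classical reduction, the endpoints of the free subarcs defining its generalized bands — and $\i$ restricted to each band of $Y$ is a homeomorphism onto its image, so each band of $Y$ acquires only finitely many new subdivision points; thus the whole process is a single finite round of coordinated subdivision followed by the (already finite) classical reductions, and it terminates. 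A secondary, more routine point to verify is that these moves genuinely belong to the move-list for pairs (so that Definition~\ref{pair} and the resolution of $(T_H\hookrightarrow T_G)$ are preserved), which follows from Remark~\ref{newmoves} and the extension-of-$\i$-over-new-cells recipe used for $(A2)$.
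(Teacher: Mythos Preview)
Your proposal misidentifies where the difficulty lies and consequently proposes machinery that does not address it. By the definition of morphism, $\i$ already sends each \emph{ordinary} band of $Y$ homeomorphically onto a subband of a single band of $Y'$; hence for a base $b$ of an ordinary band, $\i(b)$ already sits inside a single base of $Y'$ and never ``crosses from one base of $Y'$ to another.'' Your first round of coordinated (M2) subdivisions is therefore vacuous, and with it the ping-pong termination worry you raise is a self-inflicted problem. The actual obstruction lives entirely at the level of \emph{generalized} bands and concerns their lengths: a generalized band $\BB_Y$ of $Y$ (a maximal stack of ordinary bands) may (i) have its image straddling two or more consecutive generalized bands of $Y'$, (ii) share a generalized band of $Y'$ with another consecutive generalized band of $Y$, or (iii) have image strictly shorter than the generalized band of $Y'$ it sits in. You do not address case~(iii) at all.

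The paper's argument is much simpler and, crucially, touches only $Y$---never $Y'$. For~(i) one horizontally cuts $\BB_Y$ at the levels where its image passes from one generalized band of $Y'$ to the next (these levels are already in $\Gamma_Y$, since ordinary bands map to subbands). For~(ii), if two consecutive generalized bands $\BB_Y^1,\BB_Y^2$ of $Y$ map into the same generalized band of $Y'$, the symmetric difference $dual(b_Y^1)\cup b_Y^2 \setminus dual(b_Y^1)\cap b_Y^2$ is a union of free subarcs; collapsing them (M5) merges $\BB_Y^1,\BB_Y^2$ into one longer generalized band. For~(iii), if $l(\widehat{\BB}_Y)<l(\BB_{Y'})$ then one base of $\widehat{\BB}_Y$ lies in the interior of $\BB_{Y'}$, which forces the corresponding base of $\BB_Y$ to be a free subarc, so one collapses all of $\BB_Y$. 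Finiteness is immediate because the number of generalized bands in $Y'$ bounds the process. No subdivision of $Y'$, no ping-pong, no appeal to the $(A2)$ recipe for extending $\i$ over new cells is needed.
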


\begin{proof}
 Let $\BB_Y$ be a generalized band of $Y$ and its image in $Y'$ be $\widehat{\BB}_Y=\i(\BB_Y)$. 
 
 (1) We may assume that $\widehat{\BB}_Y$ is contained in a single generalized band $\BB_{Y'}$ in $Y'$. Otherwise $\widehat{\BB}_Y$ is contained in the union of two or more consecutive generalized bands 
   %(the intersection between the top of a generalized band and the bottom of the next generalized band is non-degenerated) 
   in $X$. In this case, we can horizontally (transverse to the foliation) subdivide $\BB_Y$ into several generalized bands such that the image of each of these is contained in a single generalized band, see Figure~\ref{Fs1}; 
   
   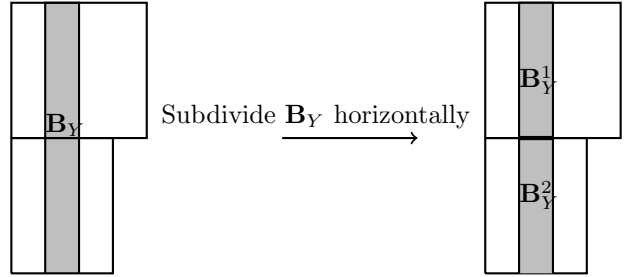
\begin{figure}[h]
     \centering 
     \begin{tikzpicture} [thick, scale=0.9]
     \footnotesize
\draw [fill=lightgray] (0.5,-2)--(0.5,2)--(1, 2)--(1,-2);
\draw (0,-2)--(0,0)--(1.5,0)--(1.5,-2)--(0,-2);
\draw (0,0)--(2,0)--(2, 2)--(0,2)--(0,0);

\node [below] at (0.8, 0.5) {$\BB_Y$};

\draw [->] (4,0)--(6,0);
\node [above]  at (4.5,0) {Subdivide $\BB_Y$ horizontally};

\draw (7,-2)--(7,0)--(8.5,0)--(8.5,-2)--(7,-2);
\draw (7,0)--(9,0)--(9, 2)--(7,2)--(7,0);
\draw [fill=lightgray] (7.5,-2)--(7.5,2)--(8, 2)--(8,-2);
\draw [ultra thick] (7.5,0)--(8,0);
\node [above] at (7.8,0.5) {$\BB_Y^1$};
\node [above] at (7.8,-1.2) {$\BB_Y^2$};

\end{tikzpicture}

\caption{\footnotesize The figure shows the case where $Y$ is identified with its image in $Y'$, i.e. $Y\hookrightarrow Y'$.
Blank rectangles represent generalized bands in $Y'$. The gray parts are generalized bands in $Y$.} \label{Fs1}
   \end{figure}

  (2) We may assume that there is no two or more consecutive generalized bands $\BB_Y^1, \BB_Y^2 \dots$ of $Y$ map into the same generalized band of $Y'$. Otherwise, let $b_Y^i$ and $dual(b_Y^{i})$ be the bottom and top bases of $\BB_Y^i$, $i=1,2,\dots$. Then $dual(b_Y^{i-1}) \cup b_Y^i - dual(b_Y^{i-1}) \cap b_Y^i$ is a union of free subarcs. After collapsing these free subarcs, $\BB_Y^1, \BB_Y^2,\dots$ form a new longer generalized band, whose image in $Y'$ is a single sub-generalized band. See Figure~\ref{Fs2};
   
   \begin{figure}[h]
   \centering
      \begin{tikzpicture}[thick, scale=0.9]
      \footnotesize
      \draw [fill=lightgray] (0.3,-2)--(0.3,0)--(1.3, 0)--(1.3,-2);
\draw [fill=lightgray] (0.7,0)--(0.7,2)--(1.7, 2)--(1.7,0);
\draw (0,-2)--(0,0)--(2,0)--(2,-2)--(0,-2);
\draw (0,0)--(2,0)--(2, 2)--(0,2)--(0,0);

\draw [ultra thick] (0.3,0)--(0.7,0);
\node [above] at (0.5,0) {$J_1$};
\draw [ultra thick] (1.3,0)--(1.7,0);
\node [below] at (1.5,0) {$J_2$};
\node [below] at (0.8, -0.5) {$\BB_Y^1$};
\node [above] at (1.2, 0.7) {$\BB_Y^2$};

\draw [->] (4,0)--(6,0);
\node [above]  at (4.5,0) {Collapse $J_1$ and $J_2$};

\draw [fill=lightgray] (7.7,-2)--(7.7,2)--(8.3, 2)--(8.3,-2);
\draw (7,-2)--(7,0)--(9,0)--(9,-2)--(7,-2);
\draw (7,0)--(9,0)--(9, 2)--(7,2)--(7,0);

\node [below] at (8, 0.5) {$\BB_Y$};

\end{tikzpicture}
\caption{\footnotesize This is the case where $Y\hookrightarrow Y'$. In this picture, $dual(b_Y^{1}) \cup b_Y^2 - dual(b_Y^{1}) \cap b_Y^2=J_1\cup J_2$.} \label{Fs2}
%Shaded parts are generalized bands in $Y$ and blank rectangles are generalized bands in $Y'$.
\end{figure}
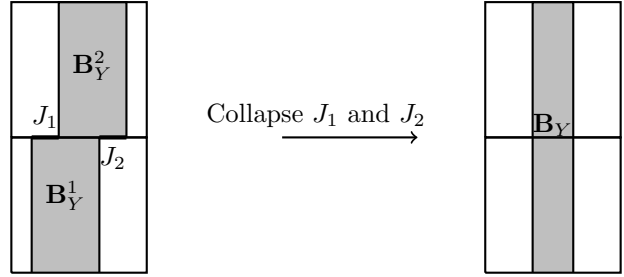

(3) Finally, we may assume that $l(\widehat{\BB}_Y)=l(\BB_{Y'})$ i.e. $\widehat{\BB}_Y$ is a sub-generalized band. Otherwise $l(\widehat{\BB}_Y)<l(\BB_{Y'})$. One of the bases of $\widehat{\BB}_Y$ is a type I free subarc. This implies that one of the bases of $\BB_Y$ is a free subarc of type I and so the whole $\BB_Y$ can be collapsed. See Figure ~\ref{Fs3}.  
   
   \begin{figure}[h]
   \centering
   \begin{tikzpicture}[thick, scale=0.8]
   \footnotesize
   \draw [fill=lightgray] (0.5,-2)--(0.5,0)--(1.5, 0)--(1.5,-2);
\draw (0,-2)--(0,1)--(2,1)--(2,-2)--(0,-2);

\node [above] at (1, -1) {$\BB_Y$};

\draw [->] (4,0)--(6,0);
\node [above]  at (4.5,0) {Collapse $\BB_Y$};

\draw (7,-2)--(7,1)--(9,1)--(9,-2)--(7,-2);

\end{tikzpicture}
\caption{\footnotesize This shows the case where $Y\hookrightarrow Y'$ .}\label{Fs3}      
   \end{figure}
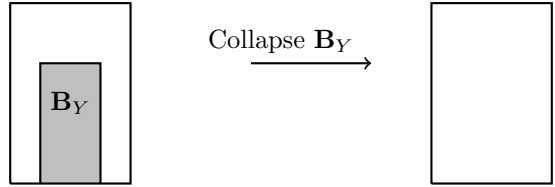

The number of generalized bands in $Y'$ is bounded. So after finitely many moves, $\i$ maps every generalized band in $Y$ to a sub-generalized band in $Y'$. 
\end{proof}

Proposition ~\ref{morphsimongb} allows us to make the following assumption. In the rest sections of this paper, a band always meant to be a generalized band unless otherwise stated.

\begin{itemize}
      \item $(A3)$: A morphism between two band complexes always represents the morphism it induced on the level of generalized bands.
\end{itemize}

\vspace{1pc}
\begin{definition}\label{fold}
   Let $Y$ be an union of bands and $B_1$, $B_2$ be two distinct bands (both have length $1$, i.e. the original bands, not the generalized bands) in $Y$.  If a base of $B_1$ is identified with a base of $B_2$, we say that these two bands have a \textbf{common base}, denoted it by $b$. Further suppose that the bases of $B_1$ and $B_2$ other than $b$ are either identified or contained in different blocks. Let $\phi: B_1\to B_2$  be the linear homeomorphism fixing $b$. Define $``\sim"$ be the equivalence relation on $Y$ such that $x\sim \phi(x)$, for all $x\in B_1$. The quotient space $Y/\sim$ is still an union of bands. Call the quotient map $f:Y\twoheadrightarrow Y/\sim$ a \textbf{fold of bands}.% between unions of bands. %The measure of $b$ is called the \textbf{width} of the fold, denoted by $w(f)$. 

In general, if a base $b_1$ of $B_1$ overlaps a base $b_2$ of $B_2$ ($B_1$ \textit{overlaps} $B_2$ for short), i.e. $b_1\cap b_2=o\neq \emptyset$, and further the dual positions of $o$ in $dual(b_1)$ and in $dual(b_2)$ are either identified or contained in different blocks (see Figure ~\ref{F7}), we may fold the subbands \footnote{Here``subband" is abused for the case where $o$ is a single point} determined by the common segment $o$ in $B_1$ and $B_2$ (the shaded parts in Figure~\ref{F7}). Moreover, we may also fold two generalized bands with a common base or an overlap if they have the same length.
\end{definition}

\begin{figure}[h]
   \centering
\begin{tikzpicture}[thick, scale=0.85]
\footnotesize
\draw (0,-3)--(0,0)--(2,0)--(2,-3)--(0,-3);
\node [left] at  (1.2,-1.8) {$B_2$};
\draw (1,0)--(1,3)--(3, 3)--(3,0)--(2,0);
\node [right] at  (2,1.4) {$B_1$};
\draw [pattern=north west lines] [dashed](1,0)--(1,3)--(2,3)--(2,0);
\node [right] at (3, 0.1) {$b_1$};
\draw [red](1,0.05)--(3,0.05);
\draw[ultra thick] (1,0)--(2,0);
\node [below] at  (1.5,0) {$o$};

\draw [dashed][pattern=north west lines] (1,0)--(1,-3)--(2,-3)--(2,0)--(1,0);
\node [left] at (0, -0.1) {$b_2$};
\draw [blue](0,-0.05)--(2,-0.05);
%\draw [red][thick](1.5,-0.05)--(2.5,-0.05);
%\node[red][below] at (2,-0.05) {$\overline{J}_2$};

\draw [->] (5,0)--(8,0);
\node [above]  at (6.5,0) {Identify shaded parts };
\node [below]  at (6.5,0) {in $B_1$ and $B_2$ to obtain $B_3$};

\draw (9,-1.5)--(9,1.5)--(12,1.5)--(12,-1.5)--(9,-1.5);
\draw [dashed][pattern=north west lines] (10,-1.5)--(10,1.5)--(11, 1.5)--(11,-1.5)--(10,-1.5);
\draw [ultra thick] (10,-1.5)--(11,-1.5);
\node [below] at  (10.5,-1.5) {$o$};
\node [left] at (10,0){$B_3$};
%\draw [fill=lightgray] (9.5,0)--(9.5,3)--(10.5,3)--(10.5,0);
%\node [above] at  (10,1) {$\BB_3$};
%\draw [fill=lightgray] (9.5,0)--(9.5,-3)--(9.9,-3)--(9.9,0)--(9.5,0);
%\node [below] at (9.7, -1.2) {$\BB_2$};

\end{tikzpicture}
\caption{\footnotesize$B_1$ and $B_2$ are two bands in $Y$. Their bases overlap at a segment $o$. Fold the subbands in $B_1$ and $B_2$ determined by $o$ to obtain a new band $B_3$.}\label{F7}
\end{figure}
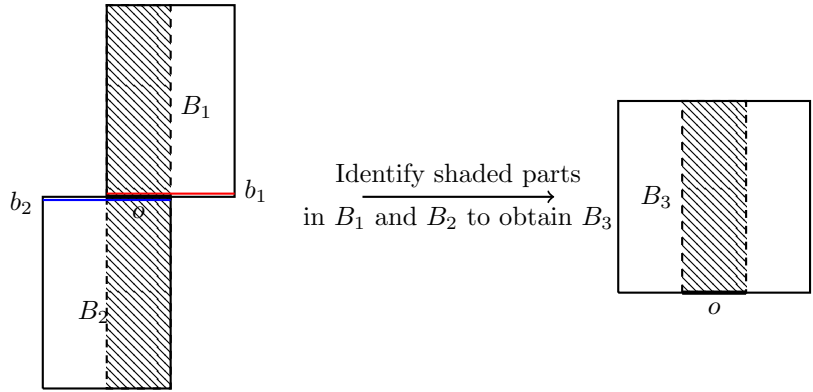

\begin{remark}
Recall we assume that a given union of bands $Y$ has the structure constructed in Definition~\ref{bandcomplex}, in particular $(A1)$ is satisfied. Roughly speaking, for a given pair of band complexes $(X\overset{\i}{\to}X')$ (with its underlying pair of unions of bands $(Y\overset{\i}{\to}Y')$), we will fold $Y$ according to the morphism $\i:Y\to Y'$ to get a new union of bands $Y^1$ ($=Y/\sim$) which is ``closer" to $Y'$. In general, the real graph $\Gamma_{Y^1}$ of $Y^1$ obtained after folding is a simplicial forest (does not satisfies $(A1)$ ). To fix this, we could do subdivision. However, the problem is that we may have to fold infinitely many times and the underlying real graph of the limiting union of bands may not be simplicial anymore. To bypass this, whenever $\Gamma_{Y^1}$ fails $(A1)$, we will replace $(Y^1\overset{\i^1}{\to}Y')$ by a new pair $(Y^{1*}\overset{\i^{1*}}{\to}Y')$ resolving the same pair of trees and in addition,$\Gamma_{Y^{1*}}$ satisfies $(A1)$. This replacement procedure is discussed in Proposition ~\ref{wideoverlap}.   
\end{remark}

 \begin{definition}
   
   Let $\i: Y\to Y'$ be a morphism between two unions of bands where $\Gamma_{Y}$ is a simplicial forest and $\Gamma_{Y'}$ is a union of edges and $\BB^1_Y,\BB^2_Y$ be two generalized bands in $Y$ that have an overlap. Let the overlap be $o=b^1_Y\cap b^2_Y$ where $b^1_Y\subset \BB^1_Y$ and $b^2_Y\subset \BB^2_Y$ are bases. Denote $\i(b^1_Y)\cap\i(b^2_Y)$ by $\widehat{o}$. $o$ and $\widehat{o}$ are segments. We say the overlap between $\BB^1_Y$ and $\BB^2_Y$ is \textbf{wide} if $o$ and $\widehat{o}$ have the same length. 
\end{definition}

\begin{prop}\label{wideoverlap}
Let $(X\overset{\i}{\to}X')$ be a pair of band complexes where $\Gamma_X$ is a simplicial forest. We may replace $(X\overset{\i}{\to}X')$ by a new pair $(X^*\overset{\i^*}{\to} X')$ such that it resolves the same pairs of trees as $(X\overset{\i}{\to}X')$ does and the restriction of $\i^*$ to the underlying real graphs $\i^*: \Gamma_{X^*}\to \Gamma_{X'}$ is an immersion. In particular, all overlaps between bands are wide.
   %and assume $\i: V\to U$ be an almost partial covering map. Then there exist resolutions $U^*=U,V^*$ for $T,T'$ such that for two preimages $\BB=b\times I_n$ and $\BB'=b'\times I_n$ in $V^*$ of a band $\widehat{\BB}=\widehat{b}\times I_n$ in $U^*$ ($\i(b),\i(b')\subseteq \widehat{b}$). If as segments in $\Gamma_Y$, $b\cap b'=l\neq \emptyset$, then $\i(l)=\widehat{l}$ where $\widehat{l}=\i(b)\cap \i(b')\subseteq \widehat{b}$.  And in this case, we say $\BB$ and $\BB'$ have a \textbf{wide overlap}.
\end{prop}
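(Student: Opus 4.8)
The plan is to produce $(X^*\overset{\i^*}{\to}X')$ by folding away all non-wide overlaps in $\Gamma_X$, using the morphism $\i$ to tell us which identifications to make, and then repairing $(A1)$ after each fold. I would begin by analyzing a single non-wide overlap: suppose $\BB^1_Y$ and $\BB^2_Y$ have bases $b^1_Y,b^2_Y$ overlapping in a segment $o$, but $\widehat o=\i(b^1_Y)\cap\i(b^2_Y)$ is a proper subsegment of $\i(o)$ (up to the isometry identifications). Since $\i$ takes each generalized band homeomorphically onto a sub-generalized band (assumption $(A3)$) and restricts to a morphism of real graphs, the point where $\i(b^1_Y)$ and $\i(b^2_Y)$ stop agreeing is a vertex of $\Gamma_{X'}$ or an endpoint of one of the images; pulling this point back through the two isometries gives a point in the interior of $o$ at which I can cut $\Gamma_X$ via (M3), subdividing $o$ into a piece mapping into a genuine overlap in $Y'$ and the rest. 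After finitely many such cuts — finitely many because $\Gamma_{X'}$ has finitely many vertices and each band of $Y$ maps to finitely many bands of $Y'$ — every overlap in the resulting real graph either is wide or has been separated off. On the wide overlaps I then perform folds of bands (Definition~\ref{fold}), which is exactly the situation where the dual positions are ``either identified or contained in different blocks'': the morphism $\i$ guarantees that the two subbands over the common segment map to the same subband of $Y'$, so $\i$ descends to the quotient, and folding is a move in the sense that it does not change the resolved pair of trees (it is a composition of (M4)-type slides and (M5)-type collapses, or can be realized directly as an elementary homotopy respecting the lamination).

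The second ingredient is the $(A1)$-repair. After a fold, $\Gamma_{X^*}$ is in general only a simplicial forest, not a disjoint union of edges each of which is a block or meets no bands. Here I would invoke \cite[Lemma 6.1]{bfstableactions} (the $(A1)$-normalization cited in the excerpt) to subdivide and re-coordinate $\Gamma_{X^*}$, and extend the morphism to the new cells exactly as was done in the $(A2)$ discussion earlier in the paper: subdivision annuli and new cones are sent to the $\i$-images of the split fibers and split vertices, and $\i^*$ agrees with $\i$ elsewhere. This keeps $(X^*\overset{\i^*}{\to}X')$ a pair resolving $(T_H\hookrightarrow T_G)$.

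The main obstacle — and the reason the proposition is stated the way it is — is \textbf{termination}: a priori one fold can create new overlaps that are again non-wide, and the $(A1)$-repair reintroduces subdivisions, so one must rule out an infinite loop. I would control this with a complexity/length argument. Each fold strictly identifies two distinct bands (or subbands) of $Y$ along a segment on which they were previously disjoint in $Y$ but had coincident image in $Y'$; since the image $\i(Y)\subset Y'$ is fixed and $Y'$ has finitely many (generalized) bands of bounded total length, the ``excess'' measured by, say, the total length of base-overlap in $\Gamma_X$ that is not yet wide, or equivalently the discrepancy between $\Gamma_X$ and its image in $\Gamma_{X'}$, is a nonnegative quantity that strictly decreases under each fold and is unaffected (or only finitely perturbed) by cutting and $(A1)$-repair. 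So the process stops, and when it stops no non-wide overlap remains, which is precisely the statement that $\i^*\colon\Gamma_{X^*}\to\Gamma_{X'}$ is an immersion (locally injective on each edge, all overlaps wide). I would then remark that immersion of real graphs plus $(A3)$ gives the ``all overlaps between bands are wide'' conclusion verbatim. The delicate point to write carefully is that the cutting moves of (M3) needed between folds do not themselves produce an infinite regress; bounding them by the vertex set of $\Gamma_{X'}$ and the band count of $Y'$ is the key estimate.
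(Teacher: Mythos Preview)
Your approach has a genuine gap: you are attacking the wrong target with the wrong tool. The proposition asks only that the \emph{real-graph} map $\i|_{\Gamma_X}:\Gamma_X\to\Gamma_{X'}$ become an immersion; the clause ``all overlaps between bands are wide'' is a \emph{corollary} of this (once $\i$ is locally injective on the block containing $b^1_Y,b^2_Y$, it is an isometric embedding there, so $\i(o)=\widehat o$). You instead try to force wideness directly via cuts (M3) and folds of \emph{bands} (Definition~\ref{fold}). But folding two bands along a common base segment does not change $\Gamma_X$ at all, and (M3) only \emph{splits} $\Gamma_X$ --- neither operation ever identifies two edges of $\Gamma_X$ that have the same $\i$-image, which is exactly what is needed to remove a failure of local injectivity of $\i|_{\Gamma}$. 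So your procedure could run without ever making $\i|_{\Gamma}$ an immersion. (Incidentally, your (M3) step is also illegal as written: (M3) requires the cut point to avoid interiors of bases, while you cut inside $o=b^1_Y\cap b^2_Y$.)

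The paper's argument is essentially one line and avoids all of your termination worries. Since $\Gamma_X$ is a simplicial forest and $\i|_\Gamma$ is a morphism of graphs, the classical Stallings factorization (cited from \cite{bfbounding}) writes $\i|_\Gamma$ as a finite composition of \emph{graph} folds followed by an immersion. One simply performs those graph folds on $\Gamma_X$; the bands attached to $\Gamma_X$ are carried along, giving $X^*$, and the induced $\i^*$ is an immersion on real graphs by construction. Termination is automatic because graph folding strictly decreases the number of edges. What you are sketching --- band folds to make $\i:Y\to Y'$ an immersion, with a complexity argument for termination --- is essentially the content of the \emph{next} proposition, Proposition~\ref{maimmersion}, which builds on the present one rather than replacing it.
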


\begin{proof}
The restriction of $\i$ to the underlying real graphs $\i: \Gamma_{X}\to \Gamma_{X'}$ is a morphism between graphs, which can be realized as composition of finitely many folds (for graphs) and an immersion \cite{bfbounding}. Let $X^*$ be the resulting band complex obtained from $X$ by folding $\Gamma_X$ according to $\i$. Let $\i^*: X^*\to X'$ be the induced morphism. $(X^*\overset{\i}{\to} X')$ is then a new pair of band complexes with the desire property. 
\end{proof}

\begin{prop}\label{maimmersion}
   Let $(X\overset{\i}{\to}X')$ be a pair of band complexes where $\Gamma_X$ is a simplicial forest. After a finite sequence of folds between graphs and folds between unions of bands, we may replace $(X\overset{\i}{\to}X')$ by a new pair $(X^*\overset{\i^*}{\to} X')$ such that it resolves the same pairs of trees as $(X\overset{\i}{\to}X')$ does and $\i^*: Y^*\to Y'$ is an immersion.
\end{prop}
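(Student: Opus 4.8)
The plan is to bootstrap from Proposition~\ref{wideoverlap}: first arrange that the restriction $\i\colon\Gamma_X\to\Gamma_{X'}$ is an immersion (so all overlaps between bands are wide), and then iteratively fold pairs of bands that witness a failure of local injectivity, showing that the process terminates. The key point is that the morphism $\i\colon Y\to Y'$ fails to be an immersion precisely when, for some point $q\in Y$, two distinct band-directions (or a band-direction and a graph-direction) at $q$ are identified by $\i$; by assumption $(A3)$ we work with generalized bands, and by Proposition~\ref{wideoverlap} we may assume any two bases that overlap do so along a segment whose $\i$-image is a segment of the same length. I would first treat the easy case: if $\i$ already is an immersion there is nothing to do. Otherwise, local non-injectivity at $q\in\Gamma_{X^*}$ must come from two generalized bands $\BB^1_Y,\BB^2_Y$ with overlapping bases $b^1_Y,b^2_Y$ meeting at a segment $o\ni q$ such that $\i(\BB^1_Y)$ and $\i(\BB^2_Y)$ agree on a neighborhood of $\i(q)$ inside a single generalized band of $Y'$; after possibly shrinking via horizontal subdivision as in the proof of Proposition~\ref{morphsimongb}, we may assume $\i(b^1_Y)$ and $\i(b^2_Y)$ coincide on the image of $o$ and that the dual positions of $o$ land in the same generalized band of $Y'$ as well. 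This is exactly the configuration permitted by Definition~\ref{fold}, so we may perform a fold of bands along $o$.

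The second step is to check that such a fold is legitimate in the pair setting and does not destroy resolving the pair of trees. Since $\i$ identifies the subbands of $\BB^1_Y$ and $\BB^2_Y$ determined by $o$ (they map to the same subband of $Y'$), folding them amounts to passing to the quotient $Y^*=Y/\!\sim$ that $\i$ already factors through; the induced $\i^*\colon Y^*\to Y'$ is well-defined, still a morphism, and still a resolution of the same $(T_H\hookrightarrow T_G)$ because the collapsed segments have measure zero in the lamination and the fold is realized by moves (subdivision (M2), slide (M4), collapse of free subarcs (M5), together with the graph-level folds of Proposition~\ref{wideoverlap}). One subtlety is that after a fold the real graph $\Gamma_{Y^*}$ may again fail to be a union of edges (it becomes a simplicial forest), but that is exactly the hypothesis we are allowed to work under, and we can re-apply Proposition~\ref{wideoverlap} before the next fold; the remark preceding Proposition~\ref{wideoverlap} already anticipates this interleaving.

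The third and crucial step is termination. Each fold of bands strictly decreases a complexity-type quantity: for instance the total number of generalized bands of $Y$ (a fold replaces two overlapping subbands by one, and away from the overlap at most subdivides, so combined with the consolidation in step~(2) of Proposition~\ref{morphsimongb}'s proof the count of bands mapping onto a fixed band of $Y'$ drops), bounded below because the number of generalized bands of $Y'$ is finite and each generalized band of $Y$ maps homeomorphically onto a sub-generalized band of $Y'$. More carefully, I would fix the target $Y'$ and for each generalized band $\BB$ of $Y'$ count the generalized bands of $Y$ whose image lies in $\BB$; a fold along a wide overlap decreases this finite multiset in a well-founded order while the interleaved applications of Proposition~\ref{wideoverlap} and Proposition~\ref{morphsimongb} do not increase it. When no fold is possible, no two band-directions at any point of $Y$ are identified, and since overlaps are wide (immersion on $\Gamma$), $\i^*$ is locally injective everywhere, i.e.\ an immersion.

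The main obstacle I expect is the bookkeeping of the termination argument: one must pin down a complexity functional that is simultaneously (i) strictly decreased by a band-fold, (ii) non-increased by the graph-folds and re-normalizations of Propositions~\ref{wideoverlap} and \ref{morphsimongb}, and (iii) bounded below using finiteness of the generalized bands of $Y'$. The interaction between graph-level folds (which can change the real graph from a union of edges to a forest and back) and band-level folds is delicate, and care is needed to ensure the two kinds of moves do not cycle. Once the right monovariant is identified — most plausibly the lexicographic pair (total number of generalized bands of $Y$, total length discrepancy $\sum l(\BB_{Y'})-l(\i(\BB_Y))$) or simply the band count relative to the fixed finite target — the rest is a routine finiteness argument, and the geometric content is entirely contained in Definition~\ref{fold} and the already-established Propositions.
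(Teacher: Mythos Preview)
Your proposal is correct and follows essentially the same approach as the paper: apply Proposition~\ref{wideoverlap} to get an immersion on the real graphs, then repeatedly fold pairs of generalized bands with a wide overlap mapping into the same band of $Y'$, re-applying Proposition~\ref{wideoverlap} between folds. Your hesitation about the termination measure is unnecessary---the paper simply observes that each band-fold strictly decreases the number of generalized bands in $Y$, which suffices.
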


\begin{proof}
We may assume that $\i:\Gamma_X\to \Gamma_{X'}$ is an immersion by Proposition ~\ref{wideoverlap}. Suppose that $\i:Y\to Y'$ is not an immersion. Then there exists a band $\BB_{Y'} \subset Y'$ which has two preimages $\BB^1_{Y},\BB^2_{Y}\subset Y$ with the property that $\BB^1_{Y}$ and $\BB^2_{Y}$ have an wide overlap. Now, fold $\BB^1_{Y}$ and $\BB^2_{Y}$ according to the overlap. Let the resulting band complex be $X^*$ and $\i^*: X^*\to X'$ be the induced morphism. In particular, $\i^*$ maps $ \BB_{Y^*}$ into $ \BB_{Y'}$ where $\BB_{Y^*}=\BB^1_{Y}\cup \BB^2_{Y}/\sim_{fold}$ is the new band in $Y^*$. Apply Proposition ~\ref{wideoverlap} again if necessary to make sure that $\i^*:\Gamma_{X^*}\to \Gamma_{X'}$ is an immersion. The number of generalized bands in $Y^*$ is less than the number in $Y$ due to the fold. Therefore, after finitely many steps of folding, $\i^*$ will be an immersion.
\end{proof}

Proposition~\ref{maimmersion} allows us to make the following assumption to any given pair of band complexes $(X\overset{\i}{\to}X')$.
 
\begin{itemize}
\item $(A4)$ The restriction of $\i$ to the level of unions of bands $\i:Y\to Y'$ is an immersion.
\end{itemize}

\vspace{1pc}
Let $Y'_0$ be a minimal component of $Y'$. Its $\i$-preimage may consists several components in $Y$. In each step of the relative Rips machine, we will preform moves on $Y'_0$ first, then apply induced moves to its preimages and modify $\i$ correspondingly. To simplify our notation, we will now focus on a special case satisfying $(A5)$, then come back to the general case in Section~\ref{generalcase}.

\begin{itemize}
   \item $(A5)$ For a fixed minimal component $Y'_0$ in $Y'$, its $\i$-preimage is either empty or a unique component $Y_0$.
\end{itemize}

\begin{remark}\label{realgraphnbhd}
   Here we note that to describe a morphism $\i: Y\to Y'$ between two unions of bands, one only need to specify a well-defined map between some neighborhoods of their underlying real graphs. Since morphisms map every generalized band $\BB_Y$ in $Y$ homeomorphically to a sub-generalized band in $Y'$. A map between neighborhoods of $\Gamma_{Y}$ and $\Gamma_{Y'}$ is \textit{well defined} if for any $\BB_Y\in Y$, neighborhoods of its bases map to dual positions that uniquely determine a band $\BB_{Y'}$ in $Y'$. Such map induces a unique morphism from $Y$ to $Y'$ (up to homotopies within bands) by sending $\BB_Y$ to $\BB_{Y'}$. 
    Moreover, for a given morphism $\i:Y\to Y'$, to show $\iota$ is an immersion (a submersion), one only need to check local injectivity (surjectivity) near the real graphs. We will use this fact throughout the rest sections. 
\end{remark}

\vspace{1pc}

Let $(X\overset{\i}{\to}X')$ be a pair of band complexes with their underlying unions of bands $(Y\overset{\i}{\to}Y')$. 
In Section~\ref{process}, often several moves described in Section~\ref{background} will be applied as units. 
We list these combinations of moves here for a better reference. %\footnote{\textit{Fold} is defined separately in subsection \ref{processIII}.}.
 \vspace{1pc}

 (M9) \textbf{Subdivide $X$ at a point}. Let $q\in \Gamma$ be a point on the real graph of $X$. We obtain a new band complex $X^*$ by apply (M2) \textit{subdivide a band} to bands with the property that one of whose bases contains $q$ as an interior point in $Y$. $X^*$ is well-defined since there are only finitely many such bands in $Y$ (See Figure~\ref{F3}). %Let the band complex corresponding to $Y'$ be $V'$\footnote{Recall \textit{Move $2$} \textit{subdivide a band} in \cite[Section 6]{bfstableactions}, after obtaining $Y'$, we may obtain its corresponding band complex by attaching \textit{subdivision $2$-cells}.}. 
 $(X^*\overset{\i^*}{\to}X')$ is a new pair of band complexes with immersion $\i^*$ defined as follows. On the level of union of bands, $\i^*: Y^*\to Y'$ is the composition of inclusion $Y^*\hookrightarrow Y$ and $\i: Y\to Y'$. On the level of attaching cells, $\i^*$ maps subdivision cells newly created from splitting to their corresponding vertical fibers in $X'$ and remains the same on all the other attaching cells.

\begin{figure}[h!]
\centerline{\includegraphics[width=1\textwidth]{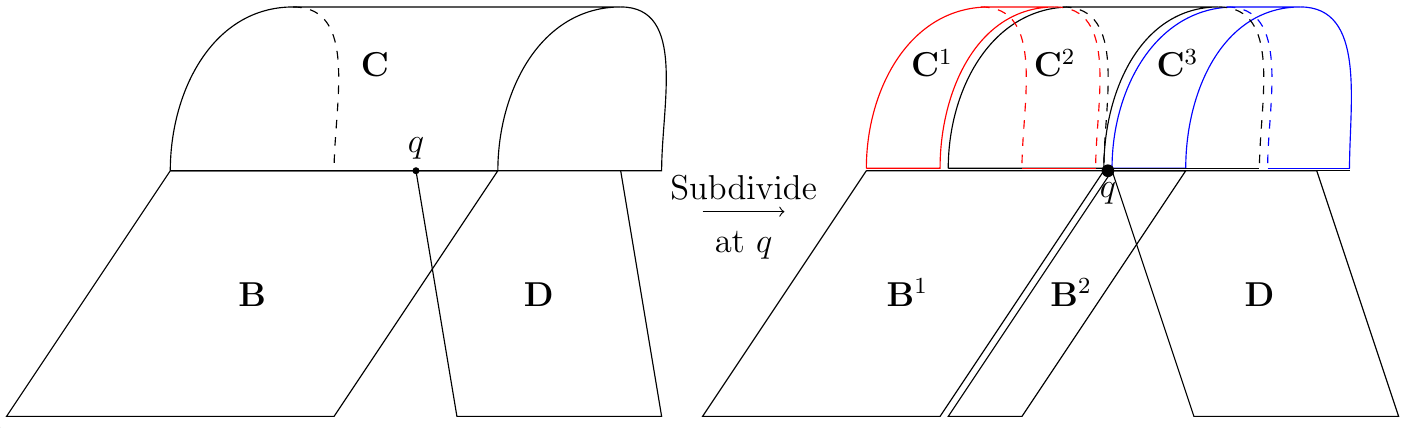}}
\caption{\footnotesize There are three generalized bands $\BB,\mathbf{C}, \mathbf{D}$ containing point $q$. In particular, one base of $\BB$ and both bases of $\CC$ contain $q$ as an interior point. Therefore (M2) is applied three times during the move of subdividing $X$ at $q$.} \label{F3}
\end{figure}

 (M10) \textbf{Subdivide $X'$ at a point}. Let $p\in \Gamma'$ be a point on the real graph of $X'$ and $q_1, \dots, q_n\in \Gamma$ be its $\i$-preimages in $X$. Using (M9), we may subdivide $X'$ at $p$ to obtain ${X'}^*$. To form  a new pair $(X^*\overset{\i^*}{\to}{X'}^*)$, we can then obtain $X^*$ by subdividing $X$ at $q_1, \dots, q_n$ and define $\i^*: X^*\to {X'}^*$ to be the induced map from $\i$. In particular, $\i^*$ maps newly created subdivision cells in $X^*$ to the corresponding newly created subdivision cells in ${X'}^*$.

  \vspace{1pc}    
(M11) \textbf{Duplicate a segment of $\Gamma$}. Let $c$ be a segment of $\Gamma$. First, using (M6) we add an extra segment $c'$ of length $l(c)$ to $\Gamma$. Then applying (M7) we attach to $Y$ a new band $\CC=c\times I$ via measure-preserving maps $c\times \{0\}\to c$ and $c\times \{1\}\to c'$. It is clear that there are two copies of $c$ in the new $Y$ and the new $Y$ resolves the same tree as the original one. 
      In particular, a band $\BB\subset Y$ with one of its bases $b$ contained in $c$ can now be attached to the new $Y$ either along $c$ or $c'$ (slide $\BB$ across $\CC$).   
      
      We will use (M11) to solve the problem of keeping $\i$ as an immersion in the situation that there exist overlapping bases in $Y$ mapping to disjoint bases after the application of certain moves on $Y'$. See Section~\ref{processII} for details.

      %\item \textit{Extend a band} 
      %Let $\BB_Y\subset Y$ be a band with bases $b_Y$ and $dual(b_Y)$, $\BB_X\subset X$ be the band containing $\i(\BB_Y)$. Suppose 
     % $\i(b_Y)=\i(dual(b_Y))$. Then $\BB_X$ is a weight $0$ bands, $b_X=dual(b_X)$. 
      %Let $K_{b_Y}$ and $K_{dual(b_Y)}$ be blocks in $\Gamma_Y$ containing $b_Y$ and $dual(b_Y)$ respectively. Further suppose there is no band other than $\BB_Y$ with a base contained in $K_{b_Y}$ or $K_{dual(b_Y)}$ maps into $\BB_X$. Up to attaching new arcs, we may assume there exists $b*_Y$ and $dual(b*_Y)$ where $b_Y\subset b^*_Y\subset K_{b_Y}$ and $dual(b_Y)\subset dual(b^*_Y)\subset dual(K_{b_Y})$ such that $\i(b^*_Y)=\i(dual(b^*_Y))=b_X$.
      %$(U,V)$ is a pair implies 
      
     % If for any preimage of $\BB$ in $\widetilde{Y}$ and uniquely determined preimages for $b_1$ and $b_2$ be $\widetilde{b}_1$ and $\widetilde{b}_2$, $r_H(\widetilde{b}_1)=r_H(\widetilde{b}_2)\subset T$. Then we may extend $\BB$ to a new band $\BB^*$ with bases $b_1$ and $b_2$ and call the resulting band complex $V^* (Y^*)$. It is clear from the assumption that $V^*$ still resolve $T$, see figure ??.     

\begin{notation}
  For the rest of this paper unless otherwise stated, we will use the following notation:
  
  \begin{itemize}
     \item $(X\overset{\i}{\to}X')$ is a pair of band complexes satisfying $(A1)-(A5)$;
     \item $(Y\overset{\i}{\to}Y')$ is its pair of underlying unions of bands;
     
     \item $(Y_0\overset{\i_0}{\to}Y'_0)$ is a pair of components where $Y_0\subset Y$, $Y'_0\subset Y$ and $\i_0=\i|_{Y_0}$. 
     
     \item Denote $\i(Y)\subset Y'$ by $\widehat{Y}$ and $\i(Y_0)\subset Y'_0$ by $\widehat{Y}_0$;
     \item $\BB_Y\subset Y$ is a generalized band with bases $b_Y$ and $dual(b_Y)$ (Similarly for other generalized bands, e.g. $c_Y$ and $dual(c_Y)$ for $\mathbf{C}_Y\subset Y$, $b_{Y'}$ and $dual(b_{Y'})$ for $\BB_{Y'}\subset Y'$).    
  \end{itemize}   
\end{notation}

\section{Relative Rips machine}\label{process}
We will describe the relative Rips machine in this section. This machine consists three processes (sequences of moves): Process~I, Process~II and Process~III. Fix a pair of components $(Y_0\overset{\i_0}{\to}Y'_0)$ in 
$(X\overset{\i}{\to}X')$. In each step of Process I and Process II, we first apply a move to $Y'_0$. It is the same move one would apply to $Y'_0$ in the Rips machine. Then we apply moves to $Y_0$ and modify $\i_0$ correspondingly so that $\i_0$ remains an immersion. Successive application of Process I and Process II will convert $Y'_0$ into its \textit{standard form} (Definition~\ref{standardform}), whereas $Y_0$ may not be in its standard form yet. In particular, there may exist weight $1$ bands in $Y_0$ mapping to weight $0$ or weight $\frac{1}{2}$ bands in $Y'_0$. Such bands in $Y_0$ are called pre-weight $0$ bands or pre-weight $\frac{1}{2}$ bands. Process III is then needed to deal with such bands (Section~\ref{processIII}). After finitely many steps in Process III, one will obtain a new pair of components, denoted still by $(Y_0\overset{\i_0}{\to}Y'_0)$, with the property $(*_1)$ that $Y'_0$ is in a standard form and bands in $Y_0$ have the same weights as their images in $Y'_0$. The machine then go back to Process I with $(Y_0\overset{\i_0}{\to}Y'_0)$. Property $(*_1)$ ensures that Process III will not appear again. In Section~\ref{machineoutput}, as machine output, we will show that one is able to tell the type of $Y_0$ as the machine successively applied and the machine will eventually also convert $Y_0$ into a standard form if $Y_0$ is of the surface or thin type.
Once we finish analyzing $(Y_0\overset{\i_0}{\to}Y'_0)$, we continue by choosing another pair. %From Section~\ref{processI} to Section~\ref{processIII}, we will describe these three processes. 

\subsection{Process I} \label{processI}

Let $(Y_0\overset{\i_0}{\to}Y'_0)$ be a pair of components. $(X^*\overset{\i^*}{\to}{X'}^*)$ is defined to be a pair of band complexes obtained from $(X\overset{\i}{\to}X')$ by the following operation. Find, if possible, a maximal free subarc $J_{Y'}$ of a base $b_{Y'}$ in $Y'_0$. If such a $J_{Y'}$ does not exist, define $(X^*\overset{\i^*}{\to}{X'}^*)=(X\overset{\i}{\to}{X'})$ and go on to Process II. Else in $Y'_0$, use $(M5)$ to collapse from $J_{Y'}$ to get $Y'_1$. If there are several $J_{Y'}$'s to choose from, abide the same rule as the Rips machine (see a review in Section~\ref{thinreview}).

Now we need to make corresponding changes for $Y_0$ and $\i_0$ to obtain $Y_1$ and $\i_1$. There are two cases depending on whether the collapsed region in $Y'_0$, i.e. the union of $J_{Y'}$ and the interior of $\JJ_{Y'}=J_{Y'}\times I_n$ ($J_{Y'}$ is identified with $J_{Y'}\times \{0\}$), intersects $\widehat{Y}_0$. If the collapsed region in $Y'_0$ does not intersect $\widehat{Y}_0$, let $Y_1=Y_0$ and $\iota_1=\iota_0$.

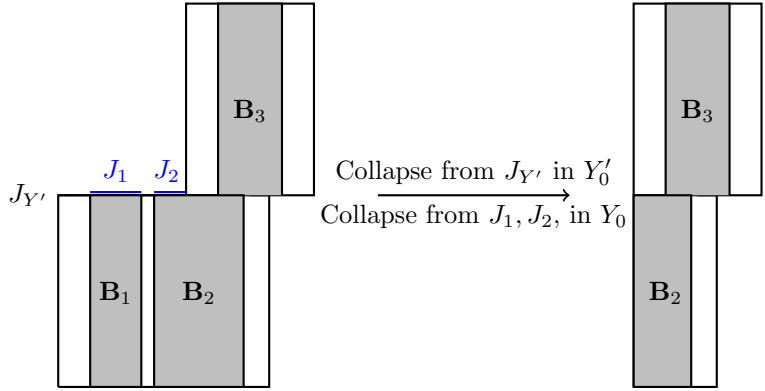
\begin{figure}[h]
   \centering
\begin{tikzpicture}[thick, scale=0.85]
\footnotesize

\draw (0,-3)--(0,0)--(3.3,0)--(3.3,-3)--(0,-3);
\draw (2,0)--(2,3)--(4, 3)--(4,0)--(3,0);
\draw [fill=lightgray] (2.5,0)--(2.5,3)--(3.5,3)--(3.5,0);
\node [above] at  (3,1) {$\BB_3$};
\draw[thick] (0,0)--(2,0);
\node [left] at (0,0) {$J_{Y'}$};

\draw [fill=lightgray] (0.5,0)--(0.5,-3)--(1.3,-3)--(1.3,0)--(0.5,0);
\draw [blue][thick] (0.5,0.05)--(1.3,0.05);
%\draw [red][thick] (0.5,-0.05)--(1.3,-0.05);

\node [blue][above] at (0.9,0.05) {$J_1$};
%\node [red][below] at (0.9,0) {$\overline{J}_1$};
\node [below] at (0.9, -1.2) {$\BB_1$};

\draw [fill=lightgray] (1.5,0)--(1.5,-3)--(2.9,-3)--(2.9,0)--(1.5,0);
\draw[blue][thick] (1.5,0.05)--(2,0.05);
\node [blue][above] at (1.7,0.05) {$J_2$};
\node [below] at (2.2, -1.2) {$\BB_2$};
%\draw [red][thick](1.5,-0.05)--(2.5,-0.05);
%\node[red][below] at (2,-0.05) {$\overline{J}_2$};

\draw [->] (5,0)--(8,0);
\node [above]  at (6.5,0) {Collapse from $J_{Y'}$ in $Y'_0$};
\node [below]  at (6.5,0) {Collapse from $J_1,J_2,$ in $Y_0$};

\draw (9,-3)--(9,0)--(10.3,0)--(10.3,-3)--(9,-3);
\draw (9,0)--(9,3)--(11, 3)--(11,0)--(10,0);
\draw [fill=lightgray] (9.5,0)--(9.5,3)--(10.5,3)--(10.5,0);
\node [above] at  (10,1) {$\BB_3$};
\draw [fill=lightgray] (9,0)--(9,-3)--(9.9,-3)--(9.9,0)--(9,0);
\node [below] at (9.5, -1.2) {$\BB_2$};

\end{tikzpicture}
\caption{\footnotesize{The figure is showing the case that $Y_0$ is identified with $\widehat{Y}_0$ near $J_{Y'}$.
Blank rectangles are bands in $Y'_0$ and gray parts are bands in $Y_0$.}}\label{F1}
\end{figure}

Otherwise let $\{\JJ_i\}_{i=1,\dots, n}$ be the set of preimages of $\JJ_{Y'}\cap \widehat{Y}_0$ in $Y_0$, and let $J_i$ be the base of $\JJ_i$ whose $\i_0$-image is contained in $J_{Y'}$.  Since $\iota_0$ is an immersion, $J_i$'s are also free subarcs. %Let $\overline{J_i}$ be the maximal free subarc in $Y_0$ containing $J_i$, for each i. 
$Y_1$ is produced by collapsing from $\{J_i\}_i$ in $Y_0$. Then restricting $\iota_0$ to $Y_1$, we get $\iota_1:Y_1\to Y'_1$ (See Figure~\ref{F1}). In particular, $\iota_1$ maps subdivision annuli created by collapsing from $\{J_i\}_i$ to subdivision annuli created by collapsing from $J_{Y'}$.

For the case where $\JJ_{Y'}$ is contained in a weight $\frac{1}{2}$ band, each $\JJ_i$ is contained in either a weight $\frac{1}{2}$ or weight $1$ band. The above process is well defined. In more detail, suppose that the band containing $\JJ_{Y'}$ is $\BB_{Y'}$ of weight $\frac{1}{2}$ and the band containing $\JJ_i$ is $\BB_Y^i$.  Let $w$ be the midpoint of the base $b_{Y'}$ of $\BB_{Y'}$. Recall that to collapse from $J_{Y'}$, we first replace $\BB_{Y'}$ by bands of weight $0,\frac{1}{2}$, and $1$ (In the degenerate case where $w$ is one end point of $J_{Y'}$, there is no weight $\frac{1}{2}$) so that $J_{Y'}$ is a free arc contained in a weight $1$ base, see \cite[section 6]{bfstableactions}.  %($\JJ_X$ is contained in the weight $1$ band).  
In each $\BB_Y^i$, we can do the corresponding replacement such that  
%For the case where $\BB_Y^i$ is of weight $1$, the induced subdivision clear. Note that if $\BB_Y^i$ is of weight $\frac{1}{2}$, then the midpoint of $b_Y^i$ must map to $w$, so the induced subdivision also works.
each $\JJ_i$ is contained in a weight $1$ band. We may then collapse from $J_{Y'}$ in $Y'_0$ and collapse from $J_i$'s in $Y_0$ as described above.

In either case, let the resulting pair of band complexes containing $(Y_1\overset{\i_1}{\to}Y'_1)$ be $(X^*\overset{\i^*}{\to}{X'}^*)$. $\i^*$ is the same as $\i_1$ when restricted to $Y_1$ and is the same as $\i$ on other components. Thus $\i^*$ remains an immersion. We then continue by applying Process I to $(X^*\overset{\i^*}{\to}{X'}^*)$.
 
Moreover since restricting to $X'$, all we have done are exactly the same as Process I of the Rips machine, so the following holds (See a proof in the first item of Proposition~\ref{thinprop}). 
$$Complexity ({X'}^*) \leq Complexity(X').$$

\subsection{Process II}\label{processII}
Follow the same notation as above, we now describe Process II which again will produce from $(X\overset{\i}{\to}X')$  another pair of band complexes $(X^*\overset{\i^*}{\to}{X'}^*)$. Process II will only be applied after Process I. So, in this section, we also assume that $Y'_0$ has no free subarc ($Y_0$ may contain some free subarcs), i.e. for each point $z\in \Gamma_{Y'_0}$, we have

$(*_2)$: \textit{the sum of weights of the bases containing $z$ is at least 2. }

\vspace{1pc}
In particular, if the Rips machine constructed in \cite{bfstableactions} takes $Y'_0$ as an input, its Process II will be applied. Moves described below for $Y'_0$ (to obtain $Y'_1$) is in fact a review for Process II of the Rips machine.
%the same moves one would apply to $Y'_0$ in the Rips machine.
%Apply the original Process II to $Y'_0$, and let the output be $Y'_1$. To be precise, 

\vspace{1pc}

We may orient $\Gamma_{Y'_0}$ and order the components of $\Gamma_{Y'_0}$. This induces a linear order on $\Gamma_{Y'_0}$. Let $K$ be the first component and $z$ be the initial point of $K$. Let $b_{Y'}$ be the longest base of positive weight containing $z$, chosen to have weight $1$ if possible. Further let $\BB_{Y'}$ be the generalized band with $b_{Y'}$ one of its bases. The union of bands $Y'_1$ is the result of the composition of the following two operations.

\textit{Operation 1} (Slide) $Y'_0\to Y'^*_0$: Slide from $b_{Y'}$ all these positive weight bases contained in $b_{Y'}$ (except $b_{Y'}$ and $\textit{dual}(b_{Y'})$) whose midpoint is moved away from $z$ as a result of the slide. 

\textit{Operation 2} (Collapse) $Y'^*_0\to Y'_1$: Collapse from the maximal free initial segment of $b_{Y'}$.

\vspace{1pc}
Now, we only need to define $Y_1$ and $\iota_1: Y_1\to Y'_1$. Once $Y_1$ and $\iota_1$ have been properly defined, since $$Complexity (Y'_1)\leq Complexity(Y'_0)$$ and if they are equal, $(*_2)$ holds for $Y'_1$ (\cite[Proposition 7.5]{bfstableactions}), we are in position to apply Process II again.  Process II is then successively applied unless the complexity decreases at some stage. We then say Process II sequence ends and go back to Process I.  

In the following, we describe how to obtain $Y_1$ and $\iota_1$ by cases depending on whether $\widehat{Y}_0$ intersects $\BB_{Y'}$.

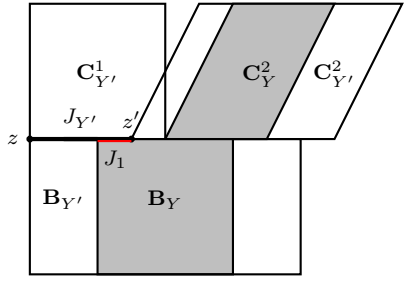
\begin{figure}[h]
\centering

\begin{tikzpicture}[thick, scale=0.9]
\draw (0,0)--(0,2)--(2,2)--(2,0);
\draw(1.5,0)--(2.5,2)--(5.5,2)--(4.5,0)--(1.5,0);

\draw[fill=lightgray](2,0)--(3,2)--(4.5,2)--(3.5,0)--(1.75,0);

%\draw [pattern=north west lines](3,0)--(3,-1) to [out=-90, in =190](5,-2.35) to  [out=0,in=-90] (9,-1)--(9,0)--
%(9.5,0)--(9.5,-1) to [out=-90, in=-10] (5,-2.35) to[out =180,in=-90] (3.5,-1)--(3.5,0)--(3,0);

%\draw [fill=gray](0.5,0)--(0.5,-1) to [out=-90, in =190](5,-2.35) to  [out=20,in=-90] (6.5,-1)--(6.5,0)--
%(7,0)--(7,-1) to [out=-90, in=-5] (5,-2.35) to[out =180,in=-90] (1,-1)--(1,0)--(0.5,0);

%\draw[fill=lightgray] (6,-3) to [out=10,in=-90](10,0)--(8,0) to[out=-90,in=10] (4,-3);
\draw [fill=lightgray](1,0)--(3,0)--(3,-2)--(1,-2)--(1,0);
\draw (0,0)--(4,0)--(4,-2)--(0,-2)--(0,0);
%\draw (0,0) to [out=-90, in =170] (3,-3)--(7,-3) to [out=160,in=-90] (4,0)--(0,0);
%\draw (7,-3) to [out=10,in=-90](11,0)--(7,0) to[out=-90,in=20] (5.3,-2.25);
%\draw [dashed](5.2,-2.3) to[out=-160, in=10] (3,-3);
%\draw [dashed](5.1,-2.8) to[out=-160, in=10] (4,-3);

\coordinate (z) at (0,0);
\filldraw(z) circle (1pt) node[left] {\tiny{$z$}};

\coordinate (z') at (1.5,0);
\filldraw(z') circle (1pt) node[above] {\tiny{$z'$}};

\draw[ultra thick] (0.5,0)--(1,0);
\node [above] at (0.75,0) {\tiny{$J_{Y'}$}};

\draw [ultra thick] (0,0)--(1.5,0);
%\node[above] at (0.75,-0.1) {$J_{Y'}$};

\draw[red][thick](1,-0.03)--(1.5,-0.03);
\node[left] at (1.6,-0.3) {\tiny{$J_1$}};

\node[above] at (1,0.6) {\tiny{$\CC^1_{Y'}$}};
\node[above] at (4.5,0.6) {\tiny{$\CC^2_{Y'}$}};
\node[above] at (3.4,0.6) {\tiny{$\CC^2_Y$}};
\node[above] at (0.5,-1.2){\tiny{$\BB_{Y'}$}};
\node[above] at (2,-1.2){\tiny{$\BB_Y$}};

\end{tikzpicture}  

\caption{\footnotesize{The figure is showing a case where $Y_0$ is contained in $Y'_0$ near $\BB_{Y'}$. Bands in $Y'_0$ are blank rectangles and bands in $Y_0$ are shown in gray. Suppose that in $Y'_0$, $\BB_{Y'}$ is the carrier, then $\CC^1_{Y'}$ will be carried and $\CC^2_{Y'}$ will not. $J_1$ is shown in red.}} \label{F2}
\end{figure}

Let $J_{Y'}$ be the maximal free initial segment of $b_{Y'}$ produced in operation 1 %with endpoints $z$ and $z'$%, 
and let the sub-generalized band collapsed in operation 2 be $\JJ_{Y'}$. We may further assume that:

   $(*_3)$: \textit{In $Y'_0$, $\widehat{Y}_0$ intersects some generalized band that is carried in operation 1.}

Otherwise, if further $\widehat{Y}_0$ also has no intersection with the interior of $\JJ_{Y'}$, we are done by letting $Y_1=Y_0$ and $\iota_1=\iota_0$; If $\widehat{Y}_0$ only intersects $\JJ_{Y'}$ (See Figure~\ref{F2}), let the set of preimages in $Y_0$ of these intersections be $\JJ_1,\dots, \JJ_k $. Let $J_i$ be a base of $\JJ_i$ as in Process I. $Y_1$ is obtained by collapsing from all $J_i$'s in $Y_0$ and $\iota_1$ is the restriction of $\iota_0$ on $Y_1$.

Under assumption $(*_3)$, there are now two cases.
\newline

\textbf{Case 1}: $\widehat{Y}_0$ has no intersection with the carrier $\BB_{Y'}$.

Let $c_Y$ be a base of a generalized band $\mathbf{C}_Y$ in $Y_0$. If $\iota_0(c_Y)$ is contained in a base that is carried by $\BB_{Y'}$, we also say $c_Y$ is \textbf{carried}. Otherwise, we say $c_Y$ is \textit{non-carried}. %It is easy to see every base of a block is carried, if the image of this block is contained in $J$. 
According to Remark~\ref{realgraphnbhd}, to define $Y_1$ and $\i_1$ we only need to work on some neighborhood of $\Gamma_{Y_0}$ in $Y_0$. The definition of $Y_1$ and $\i_1$ will be made block by block depending on whether bases contained in that block are carried.

Let $I$ be a block of $\Gamma_{Y_0}$. If every base contained in $I$ is non-carried, %(this implies $\i_0 (I)$ is disjoint from $J_X$)
 $Y_1$ and $\i_1$ are defined to be the same as $Y_0$ and $\iota_0$ near this block. 
 
 If every base contained in $I$ is carried, then define $Y_1$ to be the same as $Y_0$ near this block. $\i_1$ is defined by mapping every base $c_Y \subset I$ to the dual position of $\iota_0(c_Y)$ in $dual(b_{Y'})$. To be more precise, let $\CC_{Y'}$ be the band in $Y'_0$ containing the $\i_0$-image of $\CC_Y$, where $\CC_Y\subset Y_0$ is the band with one of its bases $c_Y$. During the operation $Y'_0\to Y'_1$, $\CC_{Y'}$ is slid across $\BB_{Y'}$ to a new position in $Y'_1$. $\i_1$ is defined to map $\CC_Y$ into $\CC_{Y'}$'s new position in $Y'_1$.  Otherwise, 
\newline

$(*_4)$: \textit{Within a block $I$, some bases are carried and some bases are not.}

Near blocks satisfying $(*_4)$, $Y_1$ is defined as a composition of the following two operations.
\newline

\textit{Operation $1'$} (Slide) $Y_0\to Y^*_0$: %Now for every block $I$ of $\Gamma_{Y_0}$, its image under $\iota_0$ is either contained in $J$ or disjoint from the interior of $J$. 
%For blocks with their images disjoint from $J$, if every base in this block is not carried, then fix $Y_0$ and $\iota_0$. \pw{(2)}
Let the union of carried bases within $I$ be $\omega = \omega_1\cup \dots \cup \omega_k$, where each $\omega_i$ is a connected component of $\omega$. Near $I$,  we define $Y^*_0$ by modifying $Y_0$ as below. Use (M11) to duplicate each $\omega_i$ by first adding a new copy of $\omega_i$ to $\Gamma_{Y_0}$ and then attaching a new band $\Omega_i=\omega_i\times [0,1]$ along $\omega_i$ and its new copy. Denote the other base of $\Omega_i$ (the new copy of $\omega_i$) by $dual(\omega_i)$ (See Figure ~\ref{F5}). In $I$, every carried base $c_Y$ is contained in some $\omega_i$. Slide each band $\CC_Y$ that corresponds to some $c_Y$ across $\Omega_i$. 
Do this modification on every block satisfying $(*_4)$ and call the resulting union of bands $Y^*_0$. %Denote the image of $\BB_{Y'}$ in $Y^*_0$ be $\BB_{{Y'}^*}$ with bases $b_{{Y'}^*}$ 

By the above construction, $Y^*_0$ is the same as $Y_0$ away from a neighborhood of $\Omega_i$'s. To define $\iota^*_0: Y^*_0\to {Y'_0}^*$, we only need to describe the map near $\Omega_i$'s.  
In $\Gamma_{Y^*_0}$, firstly, consider a block that contains some $\omega_i$. All bases other than $\omega_i$'s in this block are non-carried bases came from $Y_0$. So define $\i^*_0=\i_0$ near these bases and further define $\i^*_0(\omega_i)=\iota_0(\omega_i)\subset b_{Y'}$. Secondly, consider a block of $\Gamma_{Y^*_0}$ containing some $dual(\omega_i)$.  Each base, $c_Y$, other than $dual (\omega_i) $ in this block is a carried base came from $Y_0$. Thus define $\i^*_0$ to map $c_Y$ to the new position of $\iota_0 (c_Y)$ in $Y'^*$ and define $\iota^*_0(dual(\omega_i))=dual(\iota_0(\omega_i))$ in $dual(b_{Y'})$. Finally, let the unique subband of $\BB_{Y'}$ determined by the image of $\omega_i$ and the image of $dual(\omega_i)$ be $\widehat{\Omega}_i$. Define $\iota^*_0(\Omega_i)=\widehat{\Omega}_i$. 

Now let's check that $\i^*_0$ is an immersion. Near the blocks of $\Gamma_{Y^*_0}$ containing some $\omega_i$'s, $\i^*_0$ is the restriction of $\i_0$ to all bands other than $\Omega_i$'s. The fact that $\i_0$ is an immersion and $\widehat{Y}_0$ does not intersect $\BB_{Y'}$ implies that $\i^*_0$ is injective near these blocks. Near the blocks of $\Gamma_{Y^*_0}$ containing some $dual(\omega_i)$'s, $\i^*_0$ is injective for the same reason. 
\newline
%Firstly restricting $\i_0$ on all bands other than $\Omega_i$ contained in this block . Then slide these contained bands across $\Omega_i$ in $Y_0$ while sliding the bands containing their $\i_0$-images in $X_0$ across $\BB_X$. $\i'_0$ maps these bands in their new positions to their $\i$-images's new position. By the same reason as above, $\i'_0$ is also injective near these blocks. Thus $\i'_0: Y'_0\to X_0$ is an immersion. 

%Note that the induced map from the band complex corresponding to $Y'_0$ to the band complex corresponding to $X'_0$ may fail to be a monomorphsim due to attaching $\Omega_i$'s. Add on extra $2$-cells if needed to fix this.

\textit {Operation $2'$} (Collapse) $Y^*_0\to Y_1$: Let $\widehat{Y}^*_0\subset {Y'}^*_0$ be the image of $Y^*_0$ under $\iota^*_0$. By the construction of operation $1'$, $\widehat{Y}^*_0$ may intersect $\JJ_{Y'}$ (Images of $\Omega_i$ may intersect $\JJ_{Y'}$). The intersection part will collapse in this operation. Exactly the same as in Process I, let the set of preimages of these intersections be $\JJ_i$'s.  %Then collapse all free weight $0$ bands if there is any in $Y'_0$. 
$Y_1$ is obtained by collapsing from $J_i$'s in $Y^*_0$.  $\iota_1$ is then defined to be the restriction of $\iota^*_0$ to $Y_1$. Hence, it is an immersion since $\iota^*_0$ is.
\newline

\begin{figure}[h]
\centering
\begin{tikzpicture}[scale=0.8]
\footnotesize

\node [left] at (-0.5, 0) {$Y'$:};
\draw [fill=gray](0,-2)--(0,0)--(1.5,0)--(1.5,-2)--(0,-2);

\draw (0,0)to [out=90, in=180] (1.5,2)--(3.5,2)to [out=180,in=90](2,0)--(0,0); 
\draw (1.5,2)--(3.5,2) to [out=0, in=90] (5,0)--(3,0); 
\draw [dashed](1.5,2)to [out=0, in=115] (2.5,1.4);
\draw  (3,0) to [out=90, in=115] (2.6,1.3);
\draw [fill=gray](2,-2)--(1,0)--(2.5,0)--(3.5,-2)--(2,-2);
\draw [dashed] (1.5,0)--(1.5,-1);
\node [above] at (1.2,0.8) {$\BB_{Y'}$};
\node [above] at (0.75,-1.2) {$\CC_{Y'}$};
\node [above] at (2.25, -1.2) {$\mathbf{A}_{Y'}$};

\draw [->] (5.5,0)--(6.5,0);

%\draw (8.5,2) to [out=0, in=90] (10,0); 
%\draw (8.5,2)--(10.5,2);
%\draw (10.5,2) to [out=0, in=90] (12,0)--(10,0); 

\draw [fill=gray](10,-2)--(10,0)--(11.5,0)--(11.5,-2)--(10,-2);
\draw [fill=gray](9,-2)--(8,0)--(9.5,0)--(10.5,-2)--(9,-2);

\draw  (10,0) to [out=90, in=115] (9.6,1.3);
%\draw (10,2) to [out=0, in=90] (11.5,0)--(10,0) to [out=90, in =115] (9.5,1.3); 

\draw [dashed][fill=lightgray](10,2) to [out=0, in=90] (11.5,0)--(10,0)to [out=90, in=115] (9.6,1.3) to[out=115,in=0](8.5,2); 
\draw [dashed][fill=gray] (7,0)to [out=90, in=180] (8.5,2)--(10,2) to [out=180, in=90] (8.5,0);
%\draw [dashed](8.5,2)to [out=0, in=115] (9.5,1.4);
\draw (7,0)to [out=90, in=180] (8.5,2)--(10.5,2)to [out=180,in=90](9,0)--(7,0); 
\draw (8.5,2)--(10.5,2) to [out=0, in=90] (12,0)--(10,0)to [out=90, in=115] (9.6,1.3) ;
%\draw (7,0)to [out=90, in=180] (8.5,2)--(10.5,2)to [out=180,in=90](9,0)--(7,0);
%\draw [dashed](8.5,2)to [out=0, in=115] (9.5,1.4);
\node [above] at (8.2,0.8) {$\BB_{Y'}$};
\node [above] at  (10.5, 0.8) {$\widehat{\Omega}$};
\node [above] at (10.75,-1.2) {$\CC_{Y'}$};
\node [above] at (9.25,-1.2) {$\mathbf{A}_{Y'}$};
\draw[dashed] (10,-1)--(10,-2);

\end{tikzpicture}

\begin{tikzpicture}[scale=0.8]
\footnotesize
\node [left] at (-0.5, 0) {$Y$:};
\draw [fill=gray](0,-2)--(0,0)--(1.5,0)--(1.5,-2)--(0,-2);
\draw [fill=gray](2,-2)--(1,0)--(2.5,0)--(3.5,-2)--(2,-2);
\draw [dashed] (1.5,0)--(1.5,-1);
\draw [ultra thick] (0,0)--(1.5,0);
\node [above] at (0.75,0) {$\omega$};
\node [above] at (0.75,-1.2) {$\CC_Y$};
\node [above] at (2.25, -1.2) {$\mathbf{A}_Y$};

\draw [->] (1.5,0.5)--(1.5,1.5);
\draw [->] (2.6,0.5)--(3.8,0.5);

\draw (4,-2)--(4,0)--(5.5,0)--(5.5,-2)--(4,-2);
\draw (6,-2)--(5,0)--(6.5,0)--(7.5,-2)--(6,-2);
\draw (4,0)--(4,2)--(5.5,2)--(5.5,0)--(4,0);
\draw [ultra thick] (4,0)--(5.5,0);
\draw [ultra thick] (4,2)--(5.5,2);
\node [above] at (4.75,0) {$\omega$};
\node [above] at (4.75,2) {$dual(\omega)$};
\node [above] at (4.75,-1.2) {$\CC_Y$};
\node [above] at (6.25, -1.2) {$\mathbf{A}_Y$};
\node [above] at (4.75, 0.8) {$\Omega$};

\draw [->] (6,0.5)--(7.2,0.5);

\draw [fill=gray](11,-2)--(11,0)--(12.5,0)--(12.5,-2)--(11,-2);
\draw [fill=gray](10,-2)--(9,0)--(10.5,0)--(11.5,-2)--(10,-2);

\draw [fill=lightgray](11,2) to [out=0, in=90] (12.5,0)--(11,0) to [out=90, in =0] (9.5,2); 
\draw [fill=gray](8,0)to [out=90, in=180] (9.5,2)--(11,2) to [out=180, in=90] (9.5,0)--(8,0);
\draw [ultra thick] (9.5,0)--(8,0);
\node [above] at (8.75,0) {$\omega$};
\draw [ultra thick] (12.5,0)--(11,0);
\draw [dashed] (11,-1)--(11,-2);
\node [above] at (11.75,-0.1) {$dual(\omega)$};
\node [above] at  (11.5, 0.8) {$\Omega$};
\node [above] at (11.75,-1.2) {$\CC_Y$};
\node [above] at (10.25,-1.2) {$\mathbf{A}_Y$};
\draw [->] (9.25,2.2)--(9.25,2.7);

\end{tikzpicture}
\caption{\footnotesize The shaded part is $\widehat{Y}$ (within ${Y'}$).} \label{F5}   
\end{figure}
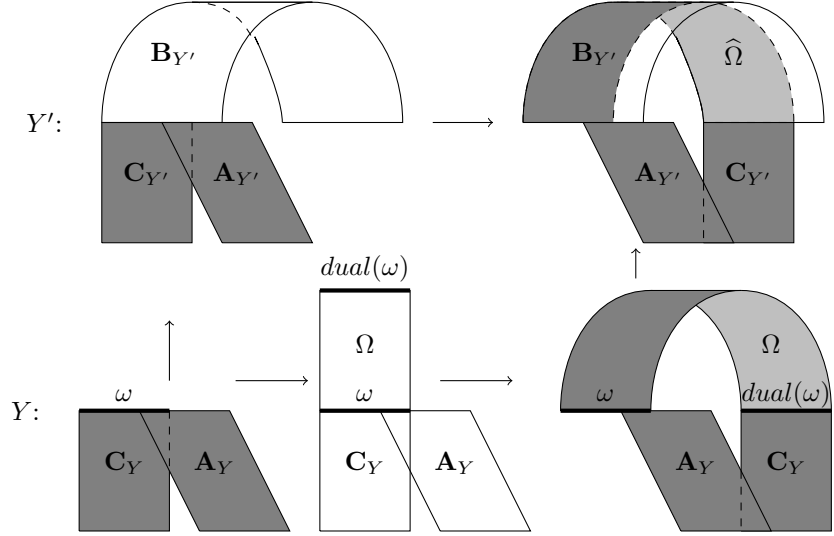

\textbf{Case 2}: $\widehat{Y}_0$ intersects $\BB_{Y'}$.
%\textit{Operation $0'$} (Subdivide) Subdivide $\widehat{Y_0}$ such that each block of $\Gamma_{\widehat{Y_0}}$ is either contained in $J$ or disjoint from the interior of $J$. That is subdividing each generalized band of $\widehat{Y_0}$ at $z'$ if its base contains $z'$ as an inner point (See Figure \ref{F3}). We only need to subdivide finitely many times since there are finitely many of such generalized bands. Then subdivide $Y_0$ correspondingly, still call them $\widehat{Y_0}$ and $Y_0$. \pw{no need to do this?}

\textit{Operation $0'$} (Subdivide) Let $\BB^1_{Y}, \dots, \BB^k_{Y}$ be generalized bands of $Y_0$, whose images under $\iota_0$ are contained in $\BB_{Y'}$.
For each $\BB^i_{Y}$, denote its base that maps into $b_{Y'}$ by $b^i_{Y_0}$. Further let the end points of $b^i_{Y}$ be $z_i$ and $z'_i$ and the block of $\Gamma_{Y_0}$ containing $b^i_{Y}$ be $K_i$. If $z_i$ or $z'_i$ is an interior point of $K_i$, subdivide $Y_0$ at that point using (M9). Denote the resulting union of bands still by $Y_0$. Each band in the new $Y_0$ is a subband of an original band in $Y_0$. So the original $\i_0$ induces a map from the new $Y_0$ to $Y'_0$. The induced map, denoted still by $\i_0$, remains an immersion since every block in the new $Y_0$ is contained in an original block of $Y_0$. %On the level of band complexes, annuli produced from subdivision are mapped to corresponding subdivided leaves in $Y'_0$.

As a result of operation $0'$, we may assume the following property for each block $K$ of $Y_0$.

    $(*_5)$ \textit{For the set of generalized bands $\{\mathbf{C}_{Y}|c_{Y}\subset K\}$, either none of $\{\i_0(\CC_{Y})\}$ intersects the interior of $\BB_{Y'}$ or exactly one of $\{\i_0(\mathbf{C}_{Y})\}$ intersects the interior of $\BB_{Y'}$. Further in the latter type, the band whose $\i_0$-image intersects $\BB_{Y'}$ is one of $\{\BB^1_{Y}, \dots, \BB^k_{Y}\}$}. 

\vspace{1pc}
\textit{Operation $1'$} (Slide) $Y_0\to Y^*_0$: In $Y_0$, for blocks of the former type in $(*_5)$, we proceed exactly as in case 1 to get $Y^*_0$ and $\i^*_0$. Otherwise, for a block $K$, let $\BB_Y$ (one of $\{\BB^i_Y\}$) be the generalized band that map into $\BB_{Y'}$ with one of its bases $b_Y=K$ mapping into $b_{Y'}$. Further let $\CC_Y\subset Y_0$ be a generalized band with one of its bases $c_Y$ contained in $b_Y$, and let the band in $Y'$ containing its $\i_0$-image be $\CC_{Y'}$. Then $Y^*_0$ is obtained by sliding every carried $\CC_Y$ across $\BB_Y$. $\i^*_0$ is defined by mapping $\CC_Y$ (possibly in a new position of $Y^*_0$) into $\CC_{Y'}$ (possibly in a new position at ${Y'_0}^*$). It is well-defined since $\CC_Y$ is carried if and only if $\CC_{Y'}$ is carried. Thus $\i^*_0$ is also an immersion.

\textit {Operation $2'$} (Collapse) $Y^*_0\to Y_1$: Define $J_i$'s as in case 1 and collapse from $J_i$'s to get $Y_1$. In particular, for a given block $K$ in the latter type of $(*_5)$, if its image under $\iota_0$ is contained in $J_{Y'}$, all bases other than $b_Y$ in this block must be carried. Therefore such $K$'s become free arcs in $Y^*_0$, and hence completely collapse. Restricting $\iota^*_0$ to $Y_1$, we get $\i_1$ which is indeed an immersion since $\i^*_0$ is. 

%\begin{example}
   %The $2$-sheeted covering surface example. $Y_0$ is a surface component in standard form, and $\i: Y_0\to Y'_0$ is an inclusion. $Y'_0$ is not in standard form. Run the relative Rips machine, we get a sequence of pairs $(Y_0\overset{\i}\to Y'_0), (Y_1\overset{\i}\to Y'_1)\dots$. Note that $Y_3$ is a 2-sheeted cover of $Y'_3$ and both $Y'_3$ and $Y_3$ are in standard form. See the last two pages for detail pictures. \pw{picture!}
%\end{example}

%Follow the description, $\iota_1$ is an immersion. And $Y_1$ is produced from $Y_0$ by sliding bands and collapse free subarcs, so $Y_1$ is still of the standard model. (\pw{make it a prop}) 

\vspace{1pc}
\begin{prop} \label{mimicgraph}
   Let $(Y_0\overset{\i_0}{\to}Y'_0)$ be a pair of components where $Y'_0$ is either of surface or thin type, and $(Y_0\overset{\i_0}{\to}Y'_0), (Y_1\overset{\i_1}{\to}Y'_1),\dots$ be a sequence of pairs of components formed by Process I and Process II. 
There exists an integer $N$ such that for all $n\geq N$, $\widehat{Y}_n\cap \BB_{Y'_n}$ is a unique sub-generalized band where $\widehat{Y}_n$ is the image of $Y_n$ in $Y'_n$ and $\BB_{Y'_n}\subset Y'_n$ is a positive weighted generalized band. Further let $\BB^1_{Y_n}, \dots, \BB^{k_n}_{Y_n}$ be generalized bands in $Y_n$ that map into $\BB_{Y'_n}$,  then $\cap_{j=1}^{k_n} \widehat{\BB}^j_{Y_n}$ contains a vertical fiber that is in the limit set $\Omega_{Y'_n}$.  
\end{prop}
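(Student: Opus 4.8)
The plan is to attach to the pair $(Y_n\overset{\i_n}{\to}Y'_n)$ a non-negative-integer complexity that no step of Process~I or Process~II can increase, take $N$ to be a stage beyond which it is constant, and then read off both assertions from the rigidity that constancy forces.

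First I would pass to a tail of the sequence. Since $Y'_0$ is of surface or thin type, after finitely many steps only one of the two processes is ever applied and $Y'_n$ is in a standard form (Definition~\ref{standardform}); by Proposition~\ref{character} and the standard-form analysis of \cite{bfstableactions} the combinatorial type of $Y'_n$ is then frozen --- its generalized bands, their weights, and the blocks they meet no longer change --- so $Y'_n$ has a fixed finite list of positive-weight generalized bands, each of which I can follow through every later step. For such a band $\BB$ of $Y'_n$ let $g_n(\BB)$ be the number of connected components of $\widehat Y_n\cap\BB$, and take the complexity of the pair to be $\bigl(c(Y_n),\ \sum_{\BB}g_n(\BB)\bigr)$ ordered lexicographically, where $c(Y_n)$ is the complexity of $Y_n$. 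Because $\i_n$ is an immersion (assumption $(A4)$, which the construction of the processes preserves), every move performed on $Y_n$ is exactly the collapse, slide, or (M11)-duplication dictated by the corresponding move on $Y'_n$, transported to the immersed copy: such moves do not raise $c(Y_n)$, and on $\widehat Y_n$ they can only merge two sub-bands inside some $\BB$, push a sub-band onto $dual(b_{Y'})$ where it coalesces with what already sits there, or delete a sub-band, while the auxiliary (M2)-subdivisions and (M11)-duplications add only weight-$0$ bands to $Y'_n$. Hence the complexity is non-increasing; being a non-increasing sequence of pairs of non-negative integers it is eventually constant, and I take $N$ to be a stage past which it is constant for all positive-weight bands of $Y'_n$ simultaneously.

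For $n\ge N$ I would first prove assertion (a). If $\widehat Y_n\cap\BB_{Y'_n}$ had two components there would be a maximal nonempty horizontal subarc $g\subset b_{Y'_n}$ with $g\times I$ disjoint from $\widehat Y_n$ but with $\widehat Y_n$ abutting $g$ on both sides. Since $\BB_{Y'_n}$ lies in a minimal component the process keeps acting near $b_{Y'_n}$, and the holonomy of the minimal lamination of $Y'_0$ on $b_{Y'_n}$ is itself minimal; tracing the collapses of free subarcs (Process~I) or of the maximal free initial segment of the carrier (Process~II), the two strips bounding $g$ are carried to positions that eventually meet, so at some later stage $g_n(\BB_{Y'_n})$ strictly drops --- contradicting constancy. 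Hence $\widehat Y_n\cap\BB_{Y'_n}$ is connected, i.e. a unique sub-generalized band of $\BB_{Y'_n}$ (or empty). Its constituent sub-bands $\widehat\BB^1_{Y_n},\dots,\widehat\BB^{k_n}_{Y_n}$ --- the images of the generalized bands $\BB^1_{Y_n},\dots,\BB^{k_n}_{Y_n}$ of $Y_n$ mapping into $\BB_{Y'_n}$ --- therefore have connected union.

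For (b), suppose $\bigcap_{j=1}^{k_n}\widehat\BB^j_{Y_n}=\emptyset$. Then some vertical fiber $F$ of the sub-band from (a) lies in the image of only a proper nonempty subcollection of the $\BB^j_{Y_n}$, so within the immersed copy the bands of $Y_n$ through $F$ could be slid to opposite sides of $F$; the next collapse would then either disconnect $\widehat Y_n\cap\BB_{Y'_n}$ or lower the number of generalized bands of $Y_n$, contradicting constancy of the complexity for $n\ge N$. So $\bigcap_j\widehat\BB^j_{Y_n}$ contains a vertical fiber $F_0$; set $f_0=F_0\cap\Gamma_{Y'_n}$. All $k_n$ bases $\i_n(b^j_{Y_n})\subset b_{Y'_n}$ contain $f_0$, and unwinding the holonomy of the corresponding bands at $f_0$ produces at least two infinite directions of $Y'_n$ there --- if there were fewer, the same pushing-apart move would again drop the complexity --- so $i_{Y'_n}(f_0)\ge 0$ and $F_0\subset\Omega_{Y'_n}$, using the characterization of the limit set in Proposition~\ref{character} (and, in the thin case, the facts on thin components collected in Appendix~\ref{thin}). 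The main obstacle is this last step together with the gap-closing argument in (a): both require converting the combinatorial stabilization of the complexity into geometric control of how Process~I and Process~II act on a positive-weight band in standard form, which is where the dynamics of the minimal lamination must be used carefully.
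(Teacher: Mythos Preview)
Your approach departs substantially from the paper's, and several of the steps do not go through as written.

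First, the assertion that once $Y'_n$ is in standard form ``its generalized bands, their weights, and the blocks they meet no longer change'' is false in the thin case. Process~I keeps running forever there: free subarcs are collapsed at every stage, $I_3$-collapses occur infinitely often, generalized bands split and merge, and widths tend to zero (Proposition~\ref{thinprop}). There is no fixed finite list of bands to ``follow through every later step'', so the complexity $\sum_{\BB} g_n(\BB)$ is not even indexed by a stable set. Even granting a reasonable re-indexing, you have not shown this sum is non-increasing: when an $I_3$-collapse splits $\BB_{Y'}$ into two bands, a single component of $\widehat Y_n\cap\BB_{Y'}$ straddling the collapsed arc becomes one component in each new band, and compensating merges with adjacent bands need not occur. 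So the monotonicity you rely on is unproven.

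Second, the argument for (b) has a genuine gap. You claim the common vertical fiber $F_0$ lies in $\Omega_{Y'_n}$ by ``unwinding the holonomy of the corresponding bands at $f_0$'', but the index $i_{Y'_n}(f_0)$ counts infinite directions in $Y'_n$, not in $Y_n$; the presence of many $\widehat\BB^j_{Y_n}$ through $F_0$ says nothing about directions in $Y'_n$ (they all sit inside the single band $\BB_{Y'_n}$). The paper handles this quite differently: it argues that if the common intersection missed the limit set, then a non-limit vertical fiber separates two of the images, and that fiber is eventually collapsed by Process~I, forcing those images into \emph{different} bands of $Y'$ --- the opposite of ``the gap closes''. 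This dynamical separation, together with an inductive check that property $(*_6)$ is preserved step-to-step, is the mechanism you are missing; for the surface case the paper instead uses the self-similar scaling of Process~II and the fact that only the component containing the limiting boundary leaf survives.
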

\begin{proof}
By construction, there exists an integer $N_0\geq 0$ such that for any pair $(Y_i\overset{\i_i}{\to}Y'_i)$ with $i\geq N_0$, $Y'_i$ is in a standard form. 
          
          First assume that there exists some $(Y_n\overset{\i_n}{\to}Y'_n)$ with $n\geq N_0$ with the desire property: 
          
          $(*_6)$ \textit{For any generalized band $\BB_{Y'_n}$ of a positive weight in $Y'_n$, $\widehat{Y}_n\cap \BB_{Y'_n}$ is a unique sub-generalized band and $\cap_{j=1}^{k_n} \widehat{\BB}^j_{Y_n} \cap \Omega_{Y'_n} \neq \emptyset$.}
          
          We now check that property $(*_6)$ also holds for $(Y_{n+1}\overset{\i_{n+1}}{\to}Y'_{n+1})$.
          
  Suppose that $(Y_{n+1}\overset{\i_{n+1}}{\to}Y'_{n+1})$ is obtained from $(Y_n\overset{\i_n}{\to}Y'_n)$ by Process~ I (i.e. $Y'_0$ is a thin component). Let $\JJ_{Y'_n}\subset Y'_n$ be the collapsed region. If $\widehat{Y}_n$ does not intersect the interior of $\JJ_{Y'_n}$, then $Y_{n+1}=Y_n$ and property $(*_6)$ is automatically preserved. Otherwise, we need to check property $(*_6)$ for \textit{images} (Notation~\ref{sequencenotation} ) of the band  $\BB_{Y'_n}$ containing $\JJ_{Y'_n}$. Let $\BB_{Y'_{n+1}}$ be an image of $\BB_{Y'_n}$ in $Y'_{n+1}$ ($\BB_{Y'_i}$ may have two images if the collapse is an $I_3$-collapse, see Section~\ref{thin} for more detail.) and $\{\BB_{Y_{n+1}}^j\}_j\subset Y_{n+1}$ be the set of $\i_{n+1}$-preimages of $\BB_{Y'_{n+1}}$. Note that each $\BB_{Y_{n+1}}^j$ is an image of some $\BB_{Y_n}^{j}$. One of the vertical boundaries of $\JJ_{Y'_n}$ is contained in $\BB_{Y'_{n+1}}$, denoted by $l_1$. According to the inductive assumption, $\cap_{j=1}^{k_n} \widehat{\BB}^j_{Y_n} \cap \Omega_{Y'_n}$ is not empty. Let $l_2$ be a vertical fiber in it. $\cap_{j=1}^{k_{n+1}} \widehat{\BB}^j_{Y_{n+1}}$ either contains $l_2$ (and we are done) or contains a neighborhood of $l_1$ in $\BB_{Y'_{n+1}}$. For the latter case, since limit set of a thin component $Y'_{n+1}$ is a prefect set, $(*_6)$ still holds.
         
          Suppose that $(Y_{n+1}\overset{\i_{n+1}}{\to}Y'_{n+1})$ is obtained from $(Y_n\overset{\i_n}{\to}Y'_n)$ by Process II (i.e. $Y'_0$ is a surface component). If $\BB_{Y'_{n}}$ is not the carrier for the step from $Y'_n$ to $ Y'_{n+1}$, $(*_6)$ holds for $\BB_{Y'_{n+1}}$ since there is a natural bijection between $\{\BB^j_{Y_{n+1}}\}_j$ and $\{\BB^j_{Y_{n}}\}_j$, and $\i_{n+1}(\BB^j_{Y_{n+1}})$ is identified with $\i_n(\BB^j_{Y_{n}})$ for all $j$. 
          Assume that $\BB_{Y'_{n}}$ is the carrier. %Then $Y'_n$ is a surface component implies that for any $z\in \mathring{b}_{Y'_{n}}$ (a base of $\BB_{Y'_{n}}$), other than $\BB_{Y'_{n}}$ there is exactly one positive weighted band containing $z$. Therefore no block of $\Gamma_{Y_n}$ has property $(*_4)$ and 
          $Y'_n$ is a surface type implies that no (M11) duplicate a segment of the real graph is applied from $Y_n\to Y_{n+1}$. In particular, 
          preimages of $\BB_{Y'_{n+1}}$ in $Y_{n+1}$ are all produced from $\BB_{Y_n}^j$ by collapsing. Follow the same argument as above for Process I, $(*_6)$ also holds.
          
\vspace{1pc}        
Secondly, we show that there exists some $n>N_0$ such that $(*_6)$ holds for $(Y_n\overset{\i_n}{\to}Y'_n)$.
\newline
%Denote $\i_m(Y_m)\subset X_m$ by $\widehat{Y}_m$. 

\textbf{Thin Case.}  In general, for a generalized band $\BB_{Y'_0}$ in $Y'_0$, $\widehat{Y}_0\cap \BB_{Y'_0}$ consists of finitely many sub-generalized bands. Given two generalized bands $\BB_{Y_0}^1, \BB_{Y_0}^2\subset Y_0$, let $\BB_{Y_i}^j$ be an image of $\BB_{Y_0}^j$ in $Y_i$ for $j=1,2$.
\newline

$(*_7)$ \textit{Claim that for $i$ sufficiently large, $\BB_{Y_i}^1$ and $\BB_{Y_i}^2$ map to the same band $\BB_{Y'_i}\subset Y'_i$ if and only if $\widehat{\BB}_{Y_i}^1\cap \widehat{\BB}_{Y_i}^2$ contains a vertical fiber in the limit set. }

 The `if' direction of the claim is clear. To prove the `only if' direction, suppose that $\BB_{Y_i}^1$ and $\BB_{Y_i}^2$ map to the same band and $\widehat{\BB}_{Y_i}^1\cap \widehat{\BB}_{Y_i}^2$ does not contain any vertical fiber in the limit set. Then there exists a vertical fiber $l'\subset \BB_{Y'_i}$, lying between the interior of $\widehat{\BB}_{Y_i}^1$ and the interior of $\widehat{\BB}_{Y_i}^2$, that is not in the limit set. %$l'$ will eventually collapse along the process.
There exists some $m>0$ such that $l'$ collapse from $X_{i+m}\to X_{i+m+1}$. Then then images of $\BB_{Y_i}^1$ and $\BB_{Y_i}^2$ in $Y_{i+m+1}$ map into different bands in $Y'_{i+m+1}$. So the claim $(*_7)$ holds.

We may pick $n$ sufficiently large such that $(*_7)$ holds for all choices of $\BB_{Y'_0}$'s and $\BB_{Y_0}$'s. Thus property $(*_6)$ follows. 
\newline
%Let the minimal distance between any two of such sub-generalized bands be $\delta$.  $\BB_{X_{i}}$ denotes an image of $\BB_{X_n}$ in $X_{i}$ ($\hookrightarrow X_{n}$ for $i>n$). 
%Then for any sufficiently large integer $N'$ that satisfies the width of every band in $X_{N'}$ is less than $\delta$, $\widehat{Y_{N'}}\cap \BB_{X_{N'}}$ is an unique sub-generalized band. There are only finitely many generalized bands in $X_n$, we may pick the largest $N'$ over all choices such that $\widehat{Y_{N'}}\cap \BB_{X_{N'}}$ is an unique band for all $\BB_{X_{N'}}\in X_{N'}$. 

%Now fix $m>N'$, assume there exists $\widehat{\BB_{Y_m}^j}$ does not intersect $ \widehat{\BB_{Y_m}^{j'}}$ where $\BB_{Y_m}^j,\BB_{Y_m}^{j'}$ are preimages of $\BB_{X_m}$. Similarly as above let $\delta'$ be the minimal distance between all disjoint pairs of $\{\widehat{\BB_{Y_m}^j}\}$'s. Then pick $N''$ large enough such that the width of every band in $X_{N''}$ is less than $\delta'$. Hence, the intersection of $\widehat{\BB_{Y_m}^j}$'s is not empty. Now pick $N$ be the largest $N''$ over all choices, then we are done.

%Further we may assume that $\mathbf{C}$ contains a segment limit leaf, otherwise $\mathbf{C}$ vanishes within finite steps. Pick sufficiently large $N$ so that all such $\mathbf{C}$ won't vanish, then we are done.

\textbf{Surface Case.} If $Y'_0$ is a surface component, then eventually only Process~II is applied. For $m>N_0$, $Y'_m$ is in a standard form and there is an infinite subsequence $m=m_1<m_2<\dots$ such that $Y'_{m_{i}}$ is a scaling down version of $Y'_{m_{i-1}}$. Thus all generalized bands in $Y'_m$ are getting thinner and their limits are the boundary leaves. Therefore, over all components of $\i_m(Y_m)\cap \BB_{Y'_m}$, only the one containing the limiting boundary (if there is any) survives till the end. Thus $\cap_{1}^{k_m}\BB_{Y_m}$ contains a neighborhood of the limiting boundary and so we are done. 
\end{proof}

\begin{definition}\label{apcover}
   Let $\i: Y\to Y'$ be a morphism between two unions of bands. $\i$ is \textbf{graph like} if each generalized band in $Y$ maps onto a generalized band in $Y'$. If $\i$ is a graph like immersion, we say $\i$ is a \textbf{partial covering map}. Further an immersion $\i$ is \textbf{an almost partial covering map} if every pair of components $(Y_0\overset{\i_0}{\to}Y'_0)\subset (Y\overset{\i}{\to}Y')$ has property $(*_6)$ described in Proposition~ \ref{mimicgraph}. 
   
  Thus Process I and Process II will eventually convert $Y'$ into a standard form and turn $\i_0: Y_0\to Y'_0$ into an almost partial covering map when $Y'_0$ is a surface or thin component. We say $(Y\overset{\i}{\to}Y')$ is eventually \textbf{stabilized} in this case.

\end{definition}

\subsection{Process III}\label{processIII}

Let $(Y_0\overset{\i_0}{\to}Y'_0)$ be a pair of components and $\BB_{Y'_0}\subset Y'_0$ be a generalized band. If $weight(\BB_{Y'_0})=1$, then all of its preimages in $Y_0$ also have the same weight. On the other hand, if $weight(\BB_{Y'_0})=0$ or $weight(\BB_{Y'_0})=\frac{1}{2}$, it is possible that instead of having the same weight as $weight(\BB_{Y'_0}$), some of its preimages in $Y_0$ have weight $1$ \footnote{Note that a preimage of a weight $0$ band (resp. $\frac{1}{2}$ band) can not be a weight $\frac{1}{2}$ band (resp. $0$ band).}. 
Such a weight $1$ preimage is called a \textbf{pre-weight $0$ band} or a \textbf{pre-weight $\frac{1}{2}$ band} (depending on the weight of $\BB_{Y'_0}$). Moreover, we say that $\i_0:Y_0\to Y'_0$ has a certain property \textbf{up to weight $0$ bands} if the restriction of $\i_0$ to the preimage of $\overline{Y'_0}$ in $Y_0$ has that property. %(i.e. $\overline{Y}_0$ omitting pre-weight $0$ bands, denote it by $\Check{Y}_0$)  

One goal of the relative Rips machine is to convert both $Y'_0$ and $Y_0$ into some normal form simultaneously. Let $(Y_0\overset{\i_0}{\to}Y'_0), (Y_1\overset{\i_1}{\to}Y'_1),\dots$ be a sequence of pairs of components formed by Process I and Process II. As from Section~\ref{processII}, $(Y_n\overset{\i_n}{\to}Y'_n)$ stabilizes for sufficiently large $n$. We would like to conclude that $Y_n$ is also in a standard form by then. 
%\footnote{$Y_0$ is in the standard form given by $(A4)$ does not imply $Y_n$ is also in the standard form since moves such as subdivision, duplicating real graphs are applied to $Y_0$ along the sequence.} 
However, $Y_n$ may fail to be standard due to the existence of pre-weight $0$ or pre-weight $\frac{1}{2}$ bands  (see Example ~\ref{covernotstandard}). In Process III, we will ``get rid'' of pre-weight $0$ bands and pre-weight $\frac{1}{2}$ bands.  Intuitively, since two bases of a pre-weight $0$ (or $\frac{1}{2}$) band have the same image in $Y'_0$, we may then collapse pre-weight $0$ and pre-weight $\frac{1}{2}$ bands by viewing them as ultra thick real graphs. More detail are discussed below.

%For a pair of components $(X_0, Y_0)$, it is clear that the image of weight $0$ bands of $Y_0$ in $X_0$ are still of zero weight. But it is possible that preimages of weight $0$ bands in $X_0$ are of positive weight in $Y_0$, i.e. $\i_0(\overline{Y}_0)$ may contain weight $0$ bands. We call such preimages in $Y_0$ \textbf{pre-weight} $0$ bands. It is clear that two bases of pre-weight $0$ do not overlap, in particular can't be a weight $\frac{1}{2}$ band.
%We say that $\i_0:Y_0\to X_0$ has certain property \textbf{up to weight $0$ bands} if the restriction of $\i_0$ on preimage of $\overline{X}_0$ in $Y_0$ (i.e. $\overline{Y}_0$ omitting pre-weight $0$ bands, denote it by $\Check{Y}_0$) has that property. 

\begin{example}\label{covernotstandard}
Let $Y'_n$ be a minimal component in a standard form and $Y_n$ be a double cover of $Y'_n$. Suppose that $\BB_{Y'_n}\subset Y'_n$ is a weight $0$ band and its two preimages in $Y_n$ are $\BB_{Y_n}^1, \BB_{Y_n}^2$. Assume that $\BB_{Y_n}^1$ and $ \BB_{Y_n}^2$ are both pre-weight $0$ bands and they form an annulus in $Y_n$ as shown in Figure~\ref{F9}. $Y_n$ is then definitely not in a standard form since the complexity of $Y_n$ goes down if we slide $\BB_{Y_n}^1$ across $\BB_{Y_n}^2$.
\end{example}

  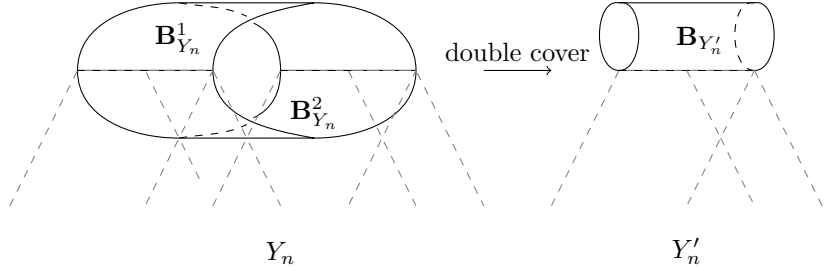
\begin{figure}[h]
   \centering
      \begin{tikzpicture}[scale=0.9]
      \footnotesize
\draw (0,3)to [out=90, in=180] (1.5,4)--(3.5,4)to [out=190,in=90](2,3)--(0,3); 
\draw (3.5,4) to [out=0, in=90] (5,3)--(3,3); 
\draw [dashed](1.5,4)to [out=-10, in=155] (2.5,3.8);
\draw  (3,3) to [out=90, in=-25] (2.6,3.7);
\draw (0,3)to [out=-90, in=-180] (1.5,2)--(3.5,2)to [out=-190,in=-90](2,3)--(0,3); 
\draw (3.5,2) to [out=0, in=-90] (5,3)--(3,3); 
\draw [dashed](1.5,2)to [out=10, in=-155] (2.5,2.2);
\draw  (3,3) to [out=-90, in=25] (2.6,2.3);
\draw[dashed] [gray] (-1,1)--(0,3)--(2,3)--(1,1);
\draw[dashed] [gray] (2,1)--(3,3)--(5,3)--(4,1);
\draw [dashed][gray] (1.8,1.4)--(1,3)--(2,3)--(3,1);
\draw [dashed][gray] (5,1)--(4,3)--(5,3)--(6,1);
\node [above] at (3,0) {$Y_n$};
\node [above] at (1.5,3.1) {$\BB_{Y_n}^1$};
\node [above] at (3.5,2) {$\BB_{Y_n}^2$};
\draw [->] (6,3)--(7,3);
\node [above]  at (6.5,3) {double cover};

\draw (8,3)--(10,3);
\draw (8,4)--(10,4);
\draw (8,3) to [out=0, in=10] (8,4);
\draw (8,3) to [out=180, in=170] (8,4);
\draw (10,3) to [out=0, in=10] (10,4);
\draw[dashed] (10,3) to [out=180, in=170] (10,4);
\draw [dashed][gray](7,1)--(8,3)--(10,3)--(9,1);
\draw [dashed][gray](11,1)--(10,3);
\draw [dashed] [gray](9,3)--(9.5,2);
\draw [dashed] [gray](9.5,2)--(10,1);
\node [above] at (9.2,3.1) {$\BB_{Y'_n}$};
\node [above] at (9,0) {$Y'_n$};
\end{tikzpicture}
\caption{\footnotesize $Y_n$ is a double cover of $Y'_n$. The preimages of a weight $0$ band $\BB_{Y'_n}$ are two pre-weight $0$ bands $\BB_{Y_n}^1$ and $\BB_{Y_n}^1$.} \label{F9}
\end{figure}

\begin{definition} \label{collapsebandnotwide}
   Let $Y$ be a union of bands and $\BB_Y\subset Y$ be a generalized band with bases $b_Y$ and $dual(b_Y)$.  We say a base $b\subset Y$ is \textbf{wide} if $b$ coincides with the block containing it, i.e. any other base intersecting $b$ is contained in $b$. $\BB_Y$ is \textbf{wide} if either $b_Y$ or $dual(b_Y)$ is wide. Suppose that $b_Y$ is wide, we may collapse $\BB_Y$ by first sliding all bases contained in $b_Y$ across $\BB_Y$, and then collapsing $\BB_Y$ from $b_Y$. This sequence of move is called \textbf{collapse a wide band}. 

      More generally, let $\BB_Y, \CC_Y \subset Y$ be two distinct generalized bands with the property that $b_Y \cap c_Y$ is non-degenerated (not a point). Using move $(M4)$ defined in Section~\ref{background} (which preserves $(A1)$ for $Y$), we can only slide $\BB_Y$ across $\CC_Y$ if $b_Y\subset c_Y$. However, if we further allow that a union of bands to have a simplicial forest as its underlying real graph, $\BB_Y$ can be slid across $\CC_Y$ as in Figure ~\ref{F8} even without the assumption that $b_Y\subset c_Y$. Let the block containing $b_Y$ and $c_Y$ be $K_{Y}$. %Assume $K_{b_Y}$ contains bases other than $b_Y$.  To make sure that after sliding, the resulting union of bands still resolves the same tree, 
In fact, we may slide every band with a base contained in $K_{Y}$ across $\CC_Y$. After all sliding,  $c_Y$ becomes a free subarc and so we collapse it. The new union of bands produced is the same as viewing $\CC_Y$ as an ultra thick segment of the underlying real graph of $Y$. This sequence of move is called \textbf{collapse a general band}.  
 %if there are two such bands in one component, pick one to be the carrier and the other one will become weight 0 or $\frac{1}{2}$. 
  % Let $Y$ be a union of bands and its underlying real graph $\Gamma_Y$ be a simplicial forest. Given a segment $c\subset \Gamma_Y$, we may duplicate $c$ by adding $c'$ to $\Gamma_Y$ and attaching $\CC$ to $Y$ as in definition \ref{new moves}. In particular for the case that $c=b_1\cap b_2$, where $b_i$ is a base of $\BB_i$, $i=1,2$. $\BB_1$,$\BB_2$ can be attached to the new $Y$ either along $c$ or $c'$. Further all the other bands with a base contained in $b_1$ or $ b_2$ can be then attached accordingly. In particular, bands with a base contained in $b_1\cap b_2$ can either attach to the new $Y$ along $c$ or $c'$, see figure \ref{F8}. 
\end{definition}

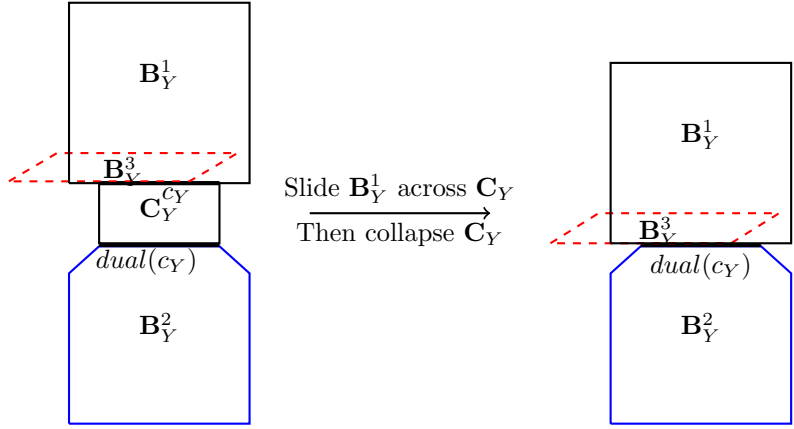
\begin{figure}[h]
   \centering
\begin{tikzpicture}[thick, scale=0.8]
\footnotesize
\draw [blue](9,-3.5)--(9,-1)--(9.5,-0.55)--(11.5,-0.55)--(12,-1)--(12,-3.5)--(9,-3.5);
\node [above] at  (10.5,-2.3) {$\BB_Y^2$};

\node [above] at  (10.5,0.9) {$\BB_Y^1$};
%\draw [pattern=north west lines] [dashed](1,0)--(1,3)--(2,3)--(2,0);
%\node [right] at (3, 0.1) {$b_1$};
%\draw [red](1,0.05)--(3,0.05);
\draw[ultra thick] (9.5,-0.53)--(11.5,-0.53);
\node [below] at  (10.5,-0.5) {$dual(c_Y)$};

\draw[red] [dashed] (8,-0.5)--(8.8,0)--(11.8,0)--(11,-0.5)--(8,-0.5);
\node [below] at (9.8, 0.1) {$\BB_Y^3$};
\draw (9,-0.5)--(9,2.5)--(12, 2.5)--(12,-0.5)--(9,-0.5);
%\draw [blue](0,-0.05)--(2,-0.05);
%\draw [red][thick](1.5,-0.05)--(2.5,-0.05);
%\node[red][below] at (2,-0.05) {$\overline{J}_2$};

\draw [->] (4,0)--(7,0);
\node [above]  at (5.5,0) {Slide $\BB_Y^1$ across $\CC_Y$ };
\node [below]  at (5.5,0) {Then collapse $\CC_Y$};

\draw [blue](0,-3.5)--(0,-1)--(0.5,-0.55)--(2.5,-0.55)--(3,-1)--(3,-3.5)--(0,-3.5);
\node [above] at  (1.5,-2.3) {$\BB_Y^2$};

\draw(0.5,0.5)--(2.5,0.5)--(2.5,-0.5)--(0.5,-0.5)--(0.5,0.5);
\draw[ultra thick] (0.5,-0.53)--(2.5,-0.53);
%\node [below] at  (1.5,-0.4) {$c'$};

\node [above] at  (1.5,-0.3) {$\CC_Y$};

\node [above] at  (1.5,1.9) {$\BB_Y^1$};
%\draw [pattern=north west lines] [dashed](1,0)--(1,3)--(2,3)--(2,0);
%\node [right] at (3, 0.1) {$b_1$};
%\draw [red](1,0.05)--(3,0.05);
\draw[ultra thick] (0.5,0.5)--(2.5,0.5);
\node [left] at  (2.2,0.3) {$c_Y$};
\node [left] at  (2.3,-0.8) {$dual(c_Y)$};

\draw[red]  [dashed](-1,0.53)--(-0.2,1)--(2.8,1)--(2,0.53)--(-1,0.53);
\node [below] at (0.9, 1.1) {$\BB_Y^3$};

\draw (0,0.5)--(0,3.5)--(3, 3.5)--(3,0.5)--(0,0.5);
%\draw[red] [dashed] (1,-0.47)--(1.8,0)--(2.8,0)--(2,-0.47)--(1,-0.47);
%\node [below] at (10.8, 0) {$\BB'_3$};
\end{tikzpicture}
\caption{\footnotesize We may slide $\BB_Y^1$ across $\CC_Y$ even if $c_Y$ does not contain $b_Y$. If the red dashed $\BB_Y^3$ exists, $\BB_Y^3$ can also be slided as shown}\label{F8}
\end{figure}

We are now ready to describe Process III which again will produce from $(X\overset{\i}{\to}X')$ another pair of band complexes $(X^* \overset{\i^*}{\to}X')$ ($X'$ remains the same). Process III will only be applied when $Y'_0 $ is in a standard form and $Y_0$ contains some pre-weight $0$ or pre-weight $\frac{1}{2}$ bands. 
\vspace{0.5pc}

Suppose that $Y_0$ contains in total $N_w$ ($N_w>0$) pre-weight $0$ and pre-weight $\frac{1}{2}$ bands. Let $\BB_{Y}\subset Y_0$ be one of them and $\BB_{Y'}\subset Y'_0$ be the weight $0$ (or $\frac{1}{2}$) band containing $\i_0(\BB_{Y})$. Further let the block of $\Gamma_{Y_0}$ containing $b_{Y}$ be $K_{b_{Y}}$ and the block of $\Gamma_{Y'_0}$ containing $b_{Y'}(=dual(b_{Y'}))$ be $K_{b_{Y'}}$. Apply either a \textit{collapse a wide band move} or a \textit{collapse a general band move} to collapse $\BB_{Y}$
(i.e. slide all bands with the property that one of whose bases is contained in $K_{b_{Y}}$, across $\BB_{Y}$ and then collapse $\BB_{Y}$). Denote the resulting component by $Y^*_0$. Let the block (may not be an edge) in $Y^*_0$ formed in this move be $K^*_{b_{Y}}$. Since $\i_0(b_{Y})=\i_0(dual(b_{Y}))$, $\i_0: Y_0\to Y'_0$ induces a well-defined morphism $\i^*_0: Y^*_0\to Y'_0$. In particular, $\i^*_0:K^*_{b_{Y}} \to K_{b_{Y'}}$ is a morphism between graphs. 
%Since $K_{b_{Y'}}$ is an edge, we can then fold $K_{b^*_{Y}}$ to an edge according to $\i^*_0$. 
Let $X^*$ be the resulting band complex corresponding to $Y^*_0$ and $\i^*: X^*\to X'$ be the morphism that equals to $\i^*_0$ when restricting to $Y^*_0$ and remains the same as $\i$ otherwise. Further, according to Proposition ~\ref{maimmersion}, up to a finite folding sequence, we may assume that $\i^*$ is an immersion. $(X^*\overset{\i^*}{\to}X')$ is then the new pair of band complexes produced from $(X\overset{\i}{\to}X')$ by Process III.

Let the total number of pre-weight $0$ and pre-weight $\frac{1}{2}$ bands in $X^*$ be $N_w^*$. It is clear that $N_w^*\leq N_w-1$ since at least $\BB_Y$ is collapsed. The inequality holds if there are further folding of bands. If other than $b_Y$, there are more pre-weight $0$ or $\frac{1}{2}$ bases in $K_{b_{Y}}$, they may become weight $0$ or $\frac{1}{2}$ in $Y^*_0$. If $N_w^*=0$, we say Process III sequence ends, and then go back to Process I with $(X^*\overset{\i^*}{\to}X')$. Otherwise, we are in the position to apply Process III again.

\begin{prop}\label{finalrelation}
   Let $(Y_0\overset{\i_0}{\to}Y'_0)$ be a pair of components and  $(Y_0\overset{\i_0}{\to}Y'_0), (Y_1\overset{\i_1}{\to}Y'_1),\dots$ be a sequence formed by the successively application of the relative Rips machine. Process III appears only finitely many times in this sequence. In particular, there exists $N>0$ such that for any $ n>N$,  $Y_n'$ is in a standard form,  $Y_n$ contains no pre-weight $0$ nor pre-weight $\frac{1}{2}$ band and $\i_n: Y_n\to Y'_n$ is an almost partial covering map when $Y'_0$ is a surface or thin component. 
\end{prop}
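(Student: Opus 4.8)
The plan is to split the machine sequence according to whether $Y'_n$ has already been carried to a standard form, and to show that Process~III occurs only in a single finite block located at that transition.

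First I would observe that deleting the Process~III steps from the sequence $(Y_i\overset{\i_i}{\to}Y'_i)$ leaves exactly the classical Rips machine run on the component $Y'_0$, since Process~III does not alter $Y'$. By the Rips machine output recalled before Definition~\ref{standardform}, the complexities of the $Y'_n$ form a non-increasing sequence of non-negative integers under Process~I and Process~II, so this sequence is eventually constant and from some stage on only one of the two processes is applied; hence $Y'_n$ is eventually in a standard form. Since Process~III is invoked only once $Y'_0$ is already in a standard form, no Process~III step occurs before $Y'_n$ reaches a standard form, so there is an index $N_0$ with $Y'_n$ in a standard form for all $n\ge N_0$, and from $N_0$ on the only non-Process-III steps are applications of the single process (Process~I if $Y'_0$ is thin, Process~II if $Y'_0$ is surface or toral) that is applied forever.

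Next I would bound the total number of Process~III steps. By the construction in Section~\ref{processIII}, one application of Process~III collapses one pre-weight $0$ (or pre-weight $\frac{1}{2}$) band $\BB_Y$, and the new count satisfies $N_w^*\le N_w-1$: the slides preceding the collapse and the folds supplied by Proposition~\ref{maimmersion} leave the image band of every generalized band unchanged, and collapsing $\BB_Y$ only merges the two blocks $K_{b_Y}$ and $K_{dual(b_Y)}$, whose $\i_0$-images already coincide, so no generalized band of $Y_0$ other than those already counted among the $N_w$ pre-weight bands can acquire a weight different from that of its image. Consequently a maximal run of Process~III beginning at any index $\ge N_0$ lasts at most $N_w$ steps and, by the stopping rule $N_w^*=0$, ends in a pair $(Y_0\overset{\i_0}{\to}Y'_0)$ with property $(*_1)$: $Y'_0$ is in a standard form and every generalized band of $Y_0$ has the same weight as its image in $Y'_0$.

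It then remains to check that $(*_1)$ is preserved by the process applied from that stage on, so that Process~III is never triggered again. The point is that in the standard-form regime the moves performed on $Y_0$ are the parallel copies of those performed on $Y'_0$: in Process~I one collapses the free subarcs $J_i$ over the free subarc $J_{Y'}$, and in Process~II (where, by the proof of Proposition~\ref{mimicgraph} for a surface $Y'_0$, no (M11)-duplication is applied from $Y_n$ to $Y_{n+1}$) one slides the carried bands over the corresponding carriers and then collapses the $J_i$ over $J_{Y'}$. Hence a generalized band $\CC_Y$ of $Y_0$ and its image $\CC_{Y'}$ in $Y'_0$ undergo the same operation, so $\CC_Y$ becomes an annulus (respectively a Mobius band, respectively is collapsed away) exactly when $\CC_{Y'}$ does, and subdivision annuli upstairs map to subdivision annuli downstairs; thus no weight $1$ band of $Y_0$ can end up over a weight $0$ or weight $\frac{1}{2}$ band of $Y'_0$, and $(*_1)$ persists. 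Therefore Process~III appears only in the finite block above, and there is $N_1\ge N_0$ such that for all $n>N_1$ the component $Y'_n$ is in a standard form and $Y_n$ contains no pre-weight $0$ nor pre-weight $\frac{1}{2}$ band. Finally, when $Y'_0$ is a surface or thin component, Proposition~\ref{mimicgraph} provides an index $N_2$ beyond which $\i_n$ satisfies $(*_6)$, i.e.\ is an almost partial covering map; taking $N=\max(N_1,N_2)$ completes the argument. The step I expect to be the main obstacle is this last verification — that the parallelism of the moves genuinely forbids a weight mismatch from reappearing — since it requires running through each shape of collapse that occurs in the standard form of Process~I and through Operations~$1'$ and $2'$ of Process~II and checking the weight bookkeeping in each case.
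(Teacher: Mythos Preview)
Your proposal is correct and follows the same overall structure as the paper: first reach a standard form on $Y'$, then exhaust the finite supply of pre-weight bands via Process~III, then argue that $(*_1)$ persists so Process~III is never triggered again, and finally invoke Proposition~\ref{mimicgraph}.

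The one place the paper proceeds differently is exactly the step you flagged as the main obstacle. Rather than tracing the parallelism of the moves through each shape of collapse and through Operations~$1'$ and $2'$ of Process~II, the paper isolates a single structural fact about $Y'$ once it is in standard form: every weight~$0$ (resp.\ weight~$\frac{1}{2}$) generalized band of $Y'_{k+i}$ is a sub-generalized band of a weight~$0$ (resp.\ weight~$\frac{1}{2}$) generalized band of $Y'_{k}$. Since at time $k+m$ the $\i$-preimages of those bands already have the matching weight, and the Process~I/II moves on the pair take place entirely over the positive-weight bands of $Y'$, no new mismatch can arise. This buys uniformity across the thin, surface, and toral cases without any case analysis; in particular it covers the (M11) duplications, which do occur in the toral case and can occur before stabilization in the surface case, and which your argument sidestepped by restricting to surface type via the proof of Proposition~\ref{mimicgraph}. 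Your route works too (the $\Omega_i$ produced by (M11) map into the positive-weight carrier, hence cannot be pre-weight~$0$ or pre-weight~$\frac{1}{2}$), but the paper's observation lets you avoid that bookkeeping entirely.
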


\begin{proof}
Suppose that $(Y_{i+1}\overset{\i_{i+1}}{\to}Y'_{i+1})$ is obtained from $(Y_i\overset{\i_i}{\to}Y'_i)$ by Process III. Then the total number of pre-weight $0$ and pre-weight $\frac{1}{2}$ bands in $Y_{i+1}$ is less than the number of $Y_i$ due to the collapse of a pre-weight $0$ or pre-weight $\frac{1}{2}$ band. Further possibly some pre-weight $0$ or $\frac{1}{2}$ bands become weight $0$ or $\frac{1}{2}$ bands and folds between bands may also be needed. Let $(Y_k\overset{\i_k}{\to}Y'_k)\to (Y_{k+1}\overset{\i_{k+1}}{\to}Y'_{k+1})$ (note $Y'_{k+1}=Y'_k$) be the first time that Process III is applied. Then after finitely many steps, say $m$ steps, the machine will return to Process~I. Furthermore, $Y'_{k}$ is in a standard form (otherwise Process III would not be applied) implies that any weight $0$ (resp. weight $\frac{1}{2}$) band in $Y'_{k+i}$ is a sub-generalized band of a weight $0$ (resp. weight $\frac{1}{2}$) generalized band in $Y'_{k}$ for any $i>0$.  As consequence, that $Y_{k+m}$ contains no pre-weight $0$ nor pre-weight $\frac{1}{2}$ band implies $Y_{k+m+i}$ contains no pre-weight $0$ nor pre-weight $\frac{1}{2}$ band for any $i>0$. Thus, Process III won't be applied again. Let $N=k+m$, and we are done.
\end{proof}

\section{Machine Output}\label{machineoutput}
\subsection{Special case}\label{special}
For a pair of components $(Y_0\overset{\i_0}{\to}Y'_0)$, in the relative Rips machine, eventually either only Process I is applied ($Y'_0$ is of thin type) or only Process II is applied ($Y'_0$ is of surface or toral type). In this section, for a fixed minimal component $Y'_0$, we will examine possible outputs for the structure of $Y_0$. 
\vspace{1pc}

When one of $\{Y_0, Y'_0\}$ is a toral component, the type of the other one can be quickly determined according to Lemma~\ref{subtoral}. Therefore our following discussion will focus more on the case of surface and thin components.

\begin{lemma}\label{subtoral}
   Let $(Y_0\overset{\i_0}{\to}Y'_0)$ be a pair of components with $\i_0$ an immersion, then 
   \begin{enumerate}
      
      \item If $Y_0$ is a toral component with rank $n>2$, then so is $Y'_0$.
      \item If $Y'_0$ is a toral component and $Y_0$ is not a simplicial component, then $Y_0$ is also a toral component .
   \end{enumerate}

\end{lemma}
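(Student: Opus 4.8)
The plan is to derive both statements from the limit--set characterisation of Proposition~\ref{character}, the key extra input being a monotonicity property of the index under immersions. Observe first that if $\i_0\colon Y_0\to Y_0'$ is an immersion of unions of bands then: (i) $\i_0$ is locally injective, hence carries distinct directions at a point $q$ to distinct directions at $\i_0(q)$; and (ii) $\i_0$ is a cellular map between compact complexes which restricts to a local isometry of the (compact) underlying real graphs, so it is finite--to--one, hence proper, hence it takes an infinite component of a leaf to an infinite component of a leaf. Combining (i) and (ii), for any $q$ with both $q\in\overline{Y_0}$ and $\i_0(q)\in\overline{Y_0'}$ one gets
\[
 i_{Y_0'}(\i_0(q))\ \ge\ i_{Y_0}(q),
\]
so $\i_0$ sends points of non--negative (resp.\ positive) index to points of non--negative (resp.\ positive) index, with finite fibres.

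For (1): ``being of a given type'' is invariant under the moves, so by Proposition~\ref{finalrelation} we may replace $(Y_0\overset{\i_0}{\to}Y_0')$ by a later pair in the sequence produced by the relative Rips machine, and thereby assume that $\i_0$ carries no pre--weight $0$ and no pre--weight $\tfrac12$ band; then $\i_0(\overline{Y_0})\subseteq\overline{Y_0'}$ and the displayed inequality applies to all of $\overline{Y_0}$. If $Y_0$ is toral of rank $n>2$ then, by Proposition~\ref{character}(2), $\Omega_{Y_0}$ contains infinitely many points of positive index; since the fibres of $\i_0$ are finite, their images form an infinite set of points of positive index in $\overline{Y_0'}$, so $\Omega_{Y_0'}$ has infinitely many such points and $Y_0'$ is toral of rank $>2$ by Proposition~\ref{character}(2) again. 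Pulling back through the moves, the original $Y_0'$ is toral of rank $>2$ as well.

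For (2): by $(A2)$ and Proposition~\ref{simormin} each component is simplicial or minimal, so $Y_0$, not being simplicial, is minimal, hence of surface, thin, or toral type (Proposition~\ref{character}); it suffices to exclude the first two. Suppose $Y_0$ were surface or thin. A toral $Y_0'$ is, up to moves, dual to an isometric action on a line $L$; precomposing a resolution $\widetilde{X_0'}\to L$ with the lifted morphism $\widetilde{\i_0}$ gives a resolution $\widetilde{X_0}\to L$ of $L$ as a $\pi_1(Y_0)$--tree (it collapses generalised leaves and embeds lifts of bases, since $\i_0$ does and $Y_0'\to L$ does), hence an equivariant map of dual trees $\phi\colon T_{Y_0}\to L$. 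Since $Y_0$ is minimal, $T_{Y_0}$ is a non--degenerate minimal tree with non--degenerate image under $\phi$, while the fact that $\i_0$ is an immersion (no band or horizontal arc is collapsed and distinct infinite leaf--directions at a point stay distinct) is meant to force $\phi$ not to fold; then $T_{Y_0}$ embeds isometrically, up to the $\pi_1(Y_0)$--action, into the line $L$, so $T_{Y_0}$ is itself a line, which makes $Y_0$ a toral — or, if the action is discrete, simplicial — component, in either case contradicting the assumption. Hence $Y_0$ is toral.

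The substantive part is (2), and the main obstacle is that the naive index monotonicity runs from $Y_0$ to $Y_0'$, i.e.\ in the wrong direction for transferring the toral structure of $Y_0'$ back to $Y_0$, so one cannot simply count positive--index points. What has to be exploited is that a toral component is dual to a \emph{line}, that this duality pulls back along $\i_0$ because $\i_0$ preserves the transverse measure, and that the immersion hypothesis together with minimality of $Y_0$ prevents the induced map of dual trees from folding, pinning $T_{Y_0}$ down to a line. Making ``$\i_0$ an immersion $\Rightarrow$ the induced map of dual trees does not fold'' precise — via the transverse measure and the correspondence between infinite leaf--directions at a point of $Y_0$ and directions of $T_{Y_0}$ — is where the work lies. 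An alternative, available when the toral component has abelian fundamental group, is to use that an immersion of unions of bands is $\pi_1$--injective, so that $\pi_1(Y_0)$ would be abelian, which is impossible for a surface or thin component, and then to reduce the general toral case to this one.
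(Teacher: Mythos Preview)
Your argument for (1) follows the paper's line---positive-index points in $\Omega_{Y_0}$ are pushed to positive-index points in $\Omega_{Y'_0}$ via the index characterisation of Proposition~\ref{character}(2)---but with more care: the paper asserts the transfer in one sentence, while you justify the index monotonicity and also invoke Proposition~\ref{finalrelation} to first eliminate pre-weight~$0$ and pre-weight~$\tfrac12$ bands. That extra step is reasonable (without it a direction through a pre-weight~$0$ band could be lost when passing to $\overline{Y'_0}$), though the paper does not make it explicit.

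For (2) you and the paper share the same starting point---the dual tree of a toral component is a line $L$---but then diverge. The paper's argument is two sentences: any minimal $\pi_1(Y_0)$-invariant subtree of $L$ is again a line (a triviality: every subtree of a line is an interval), and since $Y_0$ is not simplicial this forces $Y_0$ to be toral. You instead attempt to show that the \emph{intrinsic} dual tree $T_{Y_0}$ embeds in $L$ by proving the induced morphism of trees does not fold, and you rightly flag this as the place where the work lies---and leave it unfinished. This is harder than what is needed: the paper does not try to identify $T_{Y_0}$ with its image in $L$, only to observe that $Y_0$ resolves (a subgroup action on) a line, which is the defining feature of the axial/toral case.

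Your proposed alternative via abelian fundamental groups does not work as stated: $\pi_1(Y'_0)$, computed for the bare union of bands, is free (homotopically $Y'_0$ is a graph), not abelian. The abelian structure of a toral vertex lives in the vertex group of the associated $GD$, which depends on the $2$-cells of the ambient band complex $X'$; so $\pi_1$-injectivity of $\i_0$ on unions of bands does not force $\pi_1(Y_0)$ to be abelian.
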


\begin{proof}
(1) $Y_0$ is a toral component implies that there are infinitely many points in the limit graph of $Y_0$ (Definition ~\ref{index}) having positive indexes. This must also hold for $Y'_0$. So $Y'_0$ is a toral  component.

       (2) $Y'_0$ is a toral component implies its dual tree is a line. Therefore the minimal subtree corresponding to $Y_0$ is also a line. Thus $Y_0$ is a toral component. 
     \end{proof}

\begin{lemma}\label{simplcialimage}
Let $(Y_0\overset{\i_0}{\to}Y'_0)$ be a pair of components. Suppose that $Y'_0$ is a simplicial component, then $Y_0$ is also a simplicial component. In particular, for any pair of components $(Y_0\overset{\i_0}{\to}Y'_0)$, if $\widehat{Y}_0=\i_0(Y_0)$ is simplicial, then so is $Y_0$.
\end{lemma}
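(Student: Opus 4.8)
The plan is to use the characterization of simplicial components from Proposition~\ref{character}: a component is simplicial if and only if its limit set is empty, equivalently (by Definition~\ref{index}) no point of the component has two or more infinite directions, equivalently every leaf is compact. So the statement $Y'_0$ simplicial $\Rightarrow Y_0$ simplicial will follow if I can show that the immersion $\i_0$ forces $\Omega_{Y_0} = \emptyset$ whenever $\Omega_{Y'_0} = \emptyset$. The key local fact is that $\i_0$, being an immersion (by $(A4)$, it is locally injective near the real graph), sends the leaf through a point $q \in Y_0$ injectively into the leaf through $\i_0(q)$ in $Y'_0$, and more importantly it sends distinct directions at $q$ to distinct directions at $\i_0(q)$, carrying infinite directions to infinite directions.

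First I would make precise the claim that an immersion does not decrease the number of infinite directions at a point of $\overline{Y}_0$. Fix $q \in \overline{Y}_0$ and let $p = \i_0(q)$. Each direction $d$ at $q$ corresponds to a band of $Y_0$ containing $q$ (or more generally a component of $l_q - q$); since $\i_0$ maps each generalized band of $Y_0$ homeomorphically onto a sub-generalized band of $Y'_0$ (assumptions $(A3)$, $(A4)$), it maps the direction $d$ to a well-defined direction $\i_0(d)$ at $p$. Because $\i_0$ is locally injective near $q$, distinct directions at $q$ map to distinct directions at $p$ — this is exactly where injectivity near the real graph (Remark~\ref{realgraphnbhd}) is used. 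Moreover, since $\i_0$ restricted to the leaf $l_q$ is locally injective and maps $l_q$ into $l_p = l_{\i_0(q)}$, an infinite component of $l_q - q$ maps to an infinite subset of $l_p - p$, hence is contained in an infinite direction at $p$. Therefore
\[
\#\{\text{infinite directions at } q\} \le \#\{\text{infinite directions at } p\},
\]
so $i_{Y_0}(q) \le i_{Y'_0}(\i_0(q))$ for every $q \in \overline{Y}_0$.

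With this inequality in hand the conclusion is immediate. If $Y'_0$ is simplicial then $\Omega_{Y'_0} = \emptyset$ by Proposition~\ref{character}(1), i.e. $i_{Y'_0}(p) < 0$ for all $p \in \overline{Y'_0}$; hence $i_{Y_0}(q) \le i_{Y'_0}(\i_0(q)) < 0$ for all $q \in \overline{Y}_0$, so $\Omega_{Y_0} = \emptyset$ and $Y_0$ is simplicial by Proposition~\ref{character}(1) again. (One should note $Y_0$ is either simplicial or minimal by $(A2)$ and Proposition~\ref{simormin}, so ruling out minimality suffices; a minimal component has nonempty limit set since almost every leaf is dense and infinite.) For the "in particular" clause, apply the first part to the pair $(\widehat{Y}_0 \hookrightarrow Y'_0)$ is not needed — rather, observe that $\i_0 \colon Y_0 \to \widehat{Y}_0$ is itself a surjective immersion onto $\widehat{Y}_0$ regarded as a union of bands (it is the corestriction of $\i_0$), so if $\widehat{Y}_0$ is simplicial the same argument gives $Y_0$ simplicial.

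The main obstacle I anticipate is handling weight $0$ bands carefully: the index and limit set are defined on $\overline{Y}_0$ (weight $0$ bands omitted), but $\i_0$ need not send $\overline{Y}_0$ into $\overline{Y'_0}$ — a weight $1$ band of $Y_0$ could map to a weight $0$ band of $Y'_0$ (a pre-weight $0$ band). However, a weight $0$ band in $Y'_0$ is an annulus, so its leaves are compact circles contributing no infinite directions; thus if $\i_0(\BB)$ is weight $0$ for a band $\BB$ of $Y_0$, the image directions coming from $\BB$ are not infinite, which if anything only helps — but I must double check that such a $\BB$ cannot itself carry an infinite direction at $q$ that $\i_0$ "hides". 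Since $\i_0$ restricted to $l_q$ is locally injective into $l_p$ and $l_p$ passing through a weight $0$ band is a compact circle, the component of $l_q - q$ entering $\BB$ maps into a compact set and, being locally injectively immersed into it, must itself be finite. So infinite directions at $q$ in fact only occur along bands mapping to positive-weight bands, and the inequality $i_{Y_0}(q) \le i_{Y'_0}(\i_0(q))$ survives restriction to $\overline{Y}_0$ versus $\overline{Y'_0}$. Pinning down this compactness-pullback argument rigorously — essentially that a locally injective immersion of a (possibly non-compact) leaf segment into a compact circle has compact (finite-length) image and hence compact domain — is the one genuinely delicate point; everything else is bookkeeping with Definition~\ref{index} and Proposition~\ref{character}.
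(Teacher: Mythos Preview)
Your approach via the index inequality $i_{Y_0}(q) \le i_{Y'_0}(\i_0(q))$ is natural but contains a genuine gap at the step ``an infinite component of $l_q - q$ maps to an infinite subset of $l_p - p$, hence is contained in an infinite direction at $p$.'' Local injectivity of $\i_0$ on a leaf does not force the image of a non-compact leaf segment to be non-compact: the prototype is the covering map $\mathbb{R} \to S^1$, a locally injective immersion of an infinite graph onto a compact one. You correctly flag this as ``the one genuinely delicate point,'' but your proposed resolution (``compact image and hence compact domain'') is the wrong direction of implication. Worse, the scenario you need to exclude --- a non-compact leaf of $Y_0$ immersing into a compact leaf of $Y'_0$ --- is essentially the negation of the lemma itself, so the argument is circular.

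The paper's proof sidesteps all of this with a two-line density argument: if $Y_0$ were minimal, then a generic leaf of $Y_0$ is dense in $Y_0$, so its image under the continuous map $\i_0$ is dense in $\widehat{Y}_0$; but every leaf of $Y'_0$ (and hence of $\widehat{Y}_0$) is compact, so closed and nowhere dense, a contradiction. No indices, infinite directions, or weight-$0$ bookkeeping are needed.
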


\begin{proof}
$Y'_0$ is simplicial implies $\widehat{Y}_0$ is also simplicial. Suppose that $Y_0$ is minimal, then every leaf in $Y_0$ is dense, which implies its image is also dense in $\widehat{Y}_0$. This contradicts to that $\widehat{Y}_0$ is simplicial. 
\end{proof}

\vspace{1pc}
For a given pair of components $(Y_0\overset{\i_0}{\to}Y'_0)$, successive application of the relative Rips machine will eventually convert $\i_0$ into an almost partial covering map. The best scenario is that $\i_0$ is in fact a finite covering map. The type of $Y_0$ is then determined by Lemma~\ref{cover}. For the general case, we will first complete $Y_0$ into a finite cover as in Lemma~\ref{completecover} and Lemma~\ref{completepartialcover}, then use this constructed finite cover to study the original pair.

\begin{lemma}\label{cover}
   Let $(Y_0\overset{\i_0}{\to}Y'_0)$ be a pair of components with $\i_0$ an immersion. If further $\i_0: Y_0\to Y'_0$ is also a submersion, i.e. $\i_0$ is a local isomorphism, then $\i_0:Y_0\to Y'_0 $ is a covering map of finite degree. In particular, $Y_0$ and $Y'_0$ are of the same type. 
\end{lemma}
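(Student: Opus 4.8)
The plan is to show that a local isomorphism between unions of bands is a covering map in the usual topological sense, and then to invoke compactness to get finiteness of the degree. First I would set up the covering-space picture: since $\i_0$ is an immersion, every generalized band $\BB_{Y_0}$ of $Y_0$ maps homeomorphically onto a sub-generalized band of $Y'_0$; since $\i_0$ is also a submersion, near every point $q\in Y_0$ the map $\i_0$ is locally surjective onto a neighborhood of $\i_0(q)$. By Remark~\ref{realgraphnbhd} it suffices to analyze the behaviour near the real graphs, so the crux is to understand $\i_0$ near a point $p\in\Gamma_{Y'_0}$. Combining immersion and submersion, a band of $Y'_0$ incident to $p$ must be the image of a band of $Y_0$ incident to each preimage of $p$, and no two bands of $Y_0$ at a preimage point can fold together (that would violate injectivity). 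Hence the directions $T_qY_0$ map bijectively onto $T_pY'_0$ for every $q\in\i_0^{-1}(p)$, and a small neighborhood of $\i_0^{-1}(p)$ in $Y_0$ is a disjoint union of neighborhoods each mapped homeomorphically onto a fixed evenly-covered neighborhood of $p$. This is exactly the local triviality condition, so $\i_0:Y_0\to Y'_0$ is a covering map.

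Next I would argue the degree is finite. Since $Y'_0$ is a minimal component it is compact (a finite union of bands glued to a compact real graph), so it can be covered by finitely many evenly-covered open sets; minimality also gives connectedness of $Y'_0$, so the number of sheets is constant, call it $d$. If $d$ were infinite, $Y_0$ would fail to be a finite union of bands — but $Y_0\subset Y$ is a component of a union of bands and hence built from finitely many bands over a finite real graph, a contradiction. Therefore $d<\infty$ and $\i_0$ is a finite covering.

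Finally, to see that $Y_0$ and $Y'_0$ have the same type, I would transport the structural invariants through the covering. The foliation (measured lamination) on $Y'_0$ pulls back under the covering $\i_0$ to the foliation on $Y_0$, so leaves lift to leaves and a leaf of $Y_0$ is compact iff its image is (a compact leaf has finitely many preimage components, each a finite cover of it, hence compact; conversely the image of a compact leaf is compact): this handles the simplicial case. For the minimal cases one checks that index is a local invariant preserved by a local isomorphism, so $i_{Y_0}(q)=i_{Y'_0}(\i_0(q))$ and $\i_0$ carries the limit set $\Omega_{Y_0}$ onto $\Omega_{Y'_0}$; then the characterization in Proposition~\ref{character} — emptiness of $\Omega$ (simplicial), infinitely many points of positive index (toral), $i\equiv 0$ with $\overline{\Omega}=Y_0$ (surface), finitely many positive-index points with $\Omega\cap\Gamma$ a dense $G_\delta$ (thin) — is visibly preserved under a finite covering, since density, the $G_\delta$ property, and counts-up-to-finite-multiplicity all pass up and down a finite cover. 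The main obstacle I anticipate is the careful verification that $\i_0$ is genuinely a covering map near vertex points of the real graph, i.e. ruling out the pathology where the immersion and submersion conditions hold pointwise but the map fails local triviality because of how bands are attached; this is where Remark~\ref{realgraphnbhd} and the fact (under $(A1)$) that each edge of $\Gamma_{Y'_0}$ is a block have to be used to pin down the local model.
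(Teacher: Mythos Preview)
Your proposal is correct and follows essentially the same approach as the paper. The paper is terser: it dispatches the covering-map assertion by citing Hatcher, and for the type equivalence it invokes Lemma~\ref{subtoral} and Lemma~\ref{simplcialimage} (handling the toral and simplicial cases separately) together with Proposition~\ref{character}, whereas you argue everything directly from the index characterization in Proposition~\ref{character} and the fact that index is a local invariant preserved under a local isomorphism; the two arguments are interchangeable.
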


\begin{proof} %Let $p$ be a point in $Y'_0$. There are finitely many bands in $Y'_0$ containing $p$ and each of such bands has finitely many pre-images in $Y_0$. So $p$ has finitely pre-images in $Y_0$. Let them be $q_1, q_2,\dots, q_k$. Since $\i_0$ is a local isometry, there exist a neighborhood $U_p$ for $p$ and a neighborhood $V_{q_i}$ for each $q_i$ such that $\i_0$ maps $V_{q_i}$ homeomorphically to $U_p$. Therefore, to show $\i_0$ is a finite covering map, one only need to show that we may pick $U_p$ properly such that $V_{q_i}$'s are disjoint. Each $V_{q_i}$ intersects finitely many bands in $Y_0$ and $\i_0$ maps them one-to-one correspondingly to bands intersect $U_p$ in $Y'_0$. Thus $\i_0$ is locally isomorphic on the union of these bands. We may pick $U_p$ small enough such that the diameter of $V_{q_i}$ is less $\frac{1}{3}$ (Length of each generalized band is at least $1$). Hence a different $V_{q_j}$ is at least $\frac{1}{3}$ away from $V_{q_i}$ which implies that it is disjoint from $V_{q_i}$. 
It is easy to check that $\i_0$ is a finite covering map, see for example in \cite{hatcher}. By Proposition~\ref{character}, Lemma~\ref{subtoral} and Lemma~\ref{simplcialimage} we have the following. If one of $\{Y_0,Y'_0\}$ is simplicial, then both of them are simplicial. If one of $\{Y_0,Y'_0\}$ is toral, then both of them are toral. Moreover if one of $\{Y_0,Y'_0\}$ is a surface component, then all but finitely many points in $\Gamma_{Y'_0}$ and $\Gamma_{Y_0}$ are of zero indexes, thus both of them are surface components. If one of $\{Y_0,Y'_0\}$ is a thin component, then the limit graphs of $Y_0$ and $Y'_0$ are dense $G_{\delta}$-sets, and so both of them are thin components.
\end{proof}

%\begin{lemma}\label{preservecover}

%Let $(Y_0\overset{\i_0}{\to}Y'_0)$ be a pair of components, where $\i_0: Y_0\to X_0$ is a covering map of finite degree $n$.  Then for any given sequence $(Y_0\overset{\i_0}{\to}Y'_0), (Y_1\overset{\i_1}{\to}Y'_1), \dots$ obtained by applying the relative Rips machine, $\i_i: Y_i\to Y'_i$  remains a covering map of the same degree for all $i$.  
%\end {lemma}
%Proof. Check by definition. \pw{add detail proof! modify according to the following lemma.}
%The above lemma shows that the relative Rips machine preserves the covering property of a pair. More general, replacing``covering map" by either ``partial covering map" or``almost partial covering map", the lemma still holds.

\begin{cor}\label{standardcover}
   Let $(Y_0\overset{\i_0}{\to}Y'_0)$ be a pair of components. Suppose that $Y'_0$ is a minimal component in a standard form and $Y_0$ is a finite cover of $Y'_0$. Further assume that $Y_0$ contains no pre-weight $0$ bands nor pre-weight $\frac{1}{2}$ bands. Then $Y_0$ is also in a standard form. 
\end{cor}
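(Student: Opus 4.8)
The plan is to run the ordinary Rips machine on $Y_0$ and verify that it already displays stabilized behaviour from the very first step: that it applies only one of Process~I, Process~II throughout, and that the complexity stays constant. Since $\i_0$ is both an immersion and a submersion it is a covering map, so by Lemma~\ref{cover} $Y_0$ has the same (minimal) type as $Y'_0$; combined with the two facts above, Definition~\ref{standardform} then applies with $N=0$ and yields the claim. The first ingredient is that $\i_0$ is \emph{weight preserving on bands}. As $\i_0$ is a local isomorphism, the preimage of a weight~$1$ generalized band is a union of weight~$1$ bands; the hypothesis that $Y_0$ contains no pre-weight~$0$ and no pre-weight~$\frac12$ band says exactly that the preimage of a weight~$0$ (resp. weight~$\frac12$) band consists of weight~$0$ (resp. weight~$\frac12$) bands, using also the footnote remark that a preimage of a weight~$0$ band cannot be a weight~$\frac12$ band or conversely.

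Weight preservation, together with local bijectivity near $\Gamma_{Y_0}$, implies that a subarc $J$ of a base of $Y_0$ is free if and only if $\i_0(J)$ is a free subarc of $Y'_0$: the condition that the interior meet no other positive weight base transfers across $\i_0$ in both directions. In particular $Y_0$ has a free subarc precisely when $Y'_0$ does. Because $Y'_0$ is in a standard form it either always has a free subarc — the thin case, in which only Process~I is ever applied to it — or never has one — the surface or toral case, in which only Process~II is ever applied; by the previous sentence the same dichotomy persists for $Y_0$, so the Rips machine applies to $Y_0$ only Process~I (thin case) or only Process~II (surface/toral case). This already gives the ``unique process'' half of Definition~\ref{standardform}. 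Moreover each elementary move performed on $Y'_n$ — collapsing a maximal free subarc in Process~I, a slide followed by a collapse in Process~II — can be performed on $Y_n$ simultaneously over every $\i_n$-preimage of the bands involved; those preimages are pairwise disjoint generalized bands, each carried isometrically and weight-preservingly onto the corresponding band of $Y'_n$, so the lifted operation is a disjoint union of local copies of the downstairs one, after which $Y_{n+1}$ is again a finite cover of $Y'_{n+1}$ with no pre-weight~$0$ or $\frac12$ band. (One may reorder and reorient $\Gamma_{Y_n}$ before each step so that the carrier selected by the machine on $Y_n$ is the designated lift of the carrier selected on $Y'_n$; being in a standard form does not depend on such admissible choices.)

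It remains to see that complexity is constant along this run on $Y_0$. The complexity change produced by a single Process~I or Process~II step is governed by local data near the collapsed or slid bands — the data analysed in \cite[Proposition 7.5]{bfstableactions} for Process~II, and in Proposition~\ref{thinprop} (Appendix~\ref{thin}) for Process~I — and the fact that $Y'_0$ is in a standard form means this change is $0$ for the downstairs move. Each lifted local move on $Y_n$ is isometric, via the weight-preserving local isomorphism $\i_n$, to that downstairs move, and the lifts are performed on pairwise disjoint subcomplexes; hence each changes $Complexity(Y_n)$ by $0$, so $Complexity(Y_n)=Complexity(Y_{n+1})$ for all $n$. Together with the ``unique process'' conclusion this is Definition~\ref{standardform} with $N=0$, so $Y_0$ is in a standard form.

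I expect the main obstacle to be precisely this last complexity bookkeeping: one must check that breaking the preimage of the carrier into several disjoint bands and treating them one at a time introduces no complexity drop that is invisible downstairs — equivalently, that in a standard form \emph{every} admissible Process~I/II step, not merely the canonically selected one, preserves complexity. Since the block structure of $Y_n$ is locally isometric to that of $Y'_n$ with identical weight data, this should reduce to the block-level complexity statements in \cite{bfstableactions}, but it is the step that genuinely requires care rather than a formal transfer.
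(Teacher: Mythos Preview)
Your argument is essentially correct, and the starting observations --- Lemma~\ref{cover} giving the same type, and the no-pre-weight-$0$/$\frac12$ hypothesis forcing $\i_0$ to be weight-preserving on generalized bands --- are exactly the ones the paper uses. After that point, however, the paper takes a much shorter route than you do.

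You verify Definition~\ref{standardform} directly: you lift the Rips-machine moves from $Y'_0$ to $Y_0$, argue that the same process (I or II) is applied at every stage, and then track complexity step by step. The paper instead draws an immediate structural consequence from weight preservation: for every $q\in\Gamma_{Y_0}$ one has $i_{Y_0}(q)=i_{Y'_0}(\i_0(q))$, so the limit set $\Omega_{Y_0}$ is exactly the $\i_0$-preimage of $\Omega_{Y'_0}$, with identical index data. Since $Y'_0$ is already standard, the index/limit-set characterization of Proposition~\ref{character} then places $Y_0$ in the same class, and the paper concludes $Y_0$ is standard from that. No process is ever run on $Y_0$, and no complexity bookkeeping is needed.

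Your approach buys something: it makes explicit that the lifted Rips-machine run on $Y_0$ covers the run on $Y'_0$ at every stage, which is useful elsewhere (e.g.\ in the proof of Proposition~\ref{SOF}). The cost is exactly the issue you flag at the end --- checking that disjoint lifted moves preserve complexity, and that the choices involved are admissible for the machine on $Y_0$. The paper's index argument sidesteps this entirely: once $i_{Y_0}(q)=i_{Y'_0}(p)$ pointwise, the conclusion follows from Proposition~\ref{character} in one line.
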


\begin{proof}
% To prove that $Y_0$ is in standard form, one needs to check that if the original Rips machine takes $Y_0$ as an input, a fixed process will be applied and all the produced union of bands are of the same complexity. Let $(Y_0\overset{\i_0}{\to}Y'_0), (Y_1\overset{\i_1}{\to}Y'_1),\dots$ be a sequence of pairs of components formed by the relative Rips machine.
%$Y_0$ containing no pre-weight $0$ band nor pre-weight $\frac{1}{2}$ band implies that no Process III is applied in the above sequence. %So $Y_i$ remains an $n$-sheeted cover of $X_i$ for any $i>0$. 
%Therefore moves applied to $Y_i$ along the sequence are moves one would apply to $Y_i$ if the original Rips machine takes it as an input. Thus $Y_0, Y_1,\dots $ can be viewed as a sequence where each $Y_{i+1}$ is obtained from applying the original Rips machine (several steps applied simultaneous) to $Y_i$. Since $Y'_0$ is in standard form, a fixed process (either Process I or Process II) is applied to obtain $Y'_i$'s. Thus a fixed process is applied to obtain $Y_i$'s.
By Lemma ~\ref{cover}, $Y_0$ is a minimal component of the same type as $Y'_0$. 
The assumption that $Y_0$ contains no pre-weight $0$ band nor pre-weight $\frac{1}{2}$ band implies that every generalized band $\BB_{Y_0}$ in $Y_0$ and its image $\BB_{Y'_0}$ in $Y'_0$ have the same weight. In particular, 
for any $q\in \Gamma_{Y_0}$ and its image $\i_0(q)=p \in \Gamma_{Y'_0}$, $i_{Y_0} (q)=i_{Y'_0} (p)$, i.e. $p$ and $q$ have the same index. Since $Y'_0$ is in a standard form, by Proposition~\ref{character}, $Y_0$ is also in a standard form. 
\end{proof}

%\begin{example}
%In general, lemma \ref{standardcover} is not true when $X_0$ contains weight $0$ band or its weight $\frac{1}{2}$ bands have weight $1$ preimages. For instance,  in \pw{ figure ??}, $\BB_X$ is a weight $\frac{1}{2}$ band in $X$. A component of its preimage in $Y_0$ consists two weight $1$ bands $\BB_Y^1$ and $\BB_Y^2$. The complexity of $Y_0$ can be reduced by sliding $\BB_Y^2$ across $\BB_Y^1$ .
%\end{example}

%In light of above example, we have the follwing corollary.

%\begin{cor}\label{standardcover2}
%Let $X_0$ be a minimal component in the standard form and $X_0$ contains no weight $0$ band. Suppose the total number of weight $\frac{1}{2}$ bands in $X_0$ is $m$. Then for any $n$-sheeted cover $Y_0$ of $X_0$, within $[m\cdot n/2]$ steps of slide, we may covert $Y_0$ to the standard form. 
%\end{cor}

%Proof. Let $\BB_X$ be a weight $\frac{1}{2}$ band in $X$. Each component of its preimage in $Y_0$ is 
%either weight $\frac{1}{2}$ band or a sequence of consecutive weight $1$ bands where each band is identified with $\BB_X$ by $\i_0$. In the latter case, if a component consists an odd number of bands,  then the components homotopy equivalence to a mobius band; If a com
%We will construct $Y^*_0$ as following.

Omitting weight $0$ bands of a component does not change the type of that component.
For a pair of components $(Y_0\overset{\i_0}{\to}Y'_0)$, suppose that $\i_0$ is a local isomorphism up to weight $0$ bands. The following Lemma \ref{coveruptozero} shows that a similarly conclusion as in Lemma \ref{cover} also holds in this case.

\begin{lemma}\label{coveruptozero}
   Let $(Y_0\overset{\i_0}{\to}Y'_0)$ be a pair of components. Suppose that $\i_0$ is a local isomorphism up to weight $0$ bands. Then $\i_0:Y_0\to Y'_0$ can be extended into a finite covering map $\i'_0: Y^*_0 \to Y'_0$ where $Y_0\hookrightarrow Y^*_0$ is a union of bands. Moreover, we may construct $Y^*_0$ by attaching finitely many bands to $Y_0$ such that all of these attached bands are either weight $0$ bands or may be converted into weight $0$ bands after an application of the Rips machine. In particular, $Y_0$ is a minimal component of the same type as $Y'_0$.  
\end{lemma}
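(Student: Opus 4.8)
The plan is to argue by covering space theory, in two stages: first extend the given cover over the weight $0$ part of $Y'_0$, and then invoke Lemma~\ref{cover}. Write $Z_0:=\i_0^{-1}(\overline{Y'_0})\subseteq Y_0$ for the subcomplex spanned by $\Gamma_0$ together with the bands of $Y_0$ mapping onto positive-weight bands of $Y'_0$. By hypothesis $\i_0|_{Z_0}\colon Z_0\to\overline{Y'_0}$ is a local isomorphism, so by Lemma~\ref{cover} it is a finite covering map, of some degree $d$. The bands of $Y_0$ outside $Z_0$ are precisely its weight $0$, pre-weight $0$ and pre-weight $\frac12$ bands, each carried homeomorphically by $\i_0$ onto a subband of a weight $0$ or weight $\frac12$ band of $Y'_0$. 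I would then construct $Y^*_0$ by completing this cover over the weight $0$ and weight $\frac12$ bands of $Y'_0$: if $\BB'$ is such a band, attached to a block $\sigma\subseteq\Gamma'_0$, then $\i_0^{-1}(\sigma)$ consists of $d$ disjoint isometric copies of $\sigma$ in $\Gamma_0$, the bands of $Y_0$ already over $\BB'$ join up some of these copies, and I attach finitely many new weight $0$ bands (and, when $\BB'$ has weight $\frac12$ and the relevant monodromy reverses orientation, weight $\frac12$ M\"obius bands, which become weight $0$ after one subdivision) so that the preimage of $\BB'$ becomes a genuine degree $d$ cover of $\BB'$. Carrying this out over the finitely many weight $0$/weight $\frac12$ bands of $Y'_0$ attaches only finitely many bands and yields a union of bands $Y^*_0\supseteq Y_0$ with an induced morphism $\i'_0$ extending $\i_0$, all of whose new bands are weight $0$ or become weight $0$ after the Rips machine.

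Next I would check that $\i'_0\colon Y^*_0\to Y'_0$ is a local isomorphism. By Remark~\ref{realgraphnbhd} it suffices to verify bijectivity of directions near the real graph: at a point $q\in\Gamma_0$ the directions coming from positive-weight bands are carried bijectively onto those at $\i'_0(q)$ because $Z_0\to\overline{Y'_0}$ is a cover, and the directions coming from weight $0$ bands are carried bijectively by the way the new bands were attached. Then Lemma~\ref{cover} gives that $\i'_0$ is a finite covering map, which is the first assertion. Since $Y_0$ is connected and every new band meets $\Gamma_0$, the complex $Y^*_0$ is connected, so by Lemma~\ref{cover} it is a minimal component of the same type as $Y'_0$.

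Finally I would pin down the type of $Y_0$ itself. Apply to $Y_0$ a finite sequence of \emph{collapse a wide band} and \emph{collapse a general band} moves (Definition~\ref{collapsebandnotwide}, exactly as in Process~III of Section~\ref{processIII}) to collapse all of its pre-weight $0$ and pre-weight $\frac12$ bands; each such move strictly decreases the number of pre-weight bands, so finitely many suffice, and these are Rips moves, hence type-preserving and leaving the restriction of the induced morphism over $\overline{Y'_0}$ unchanged. For the resulting $Y^\sharp_0$ with induced morphism $\i^\sharp_0$ one now has $(\i^\sharp_0)^{-1}(\overline{Y'_0})=\overline{Y^\sharp_0}$, so $\overline{Y^\sharp_0}\to\overline{Y'_0}$ is a finite cover; by Lemma~\ref{cover}, $\overline{Y^\sharp_0}$ is a minimal component of the same type as $\overline{Y'_0}$, which by the fact that omitting weight $0$ bands does not change the type of a component is the type of $Y'_0$, and likewise the type of $Y^\sharp_0$, hence of $Y_0$. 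In particular $Y^\sharp_0$ is non-simplicial (it contains $\overline{Y^\sharp_0}$), so $Y_0$ is non-simplicial, and therefore minimal by Proposition~\ref{simormin} together with the standing assumption $(A2)$. Thus $Y_0$ is a minimal component of the same type as $Y'_0$.

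The main obstacle I expect is the combinatorial bookkeeping in the extension step: describing precisely how the weight $0$, pre-weight $0$ and pre-weight $\frac12$ bands of $Y_0$ lying over a given weight $0$ (or weight $\frac12$) band of $Y'_0$ match up the $d$ lifts of $\sigma$ — using $(A4)$ (that $\i_0$ is an immersion) to bound how many band-ends meet each lift, possibly after subdividing so that each lift is a block — and then checking that the finite collection of weight $0$ (or weight $\frac12$) bands one adjoins makes the direction count at every point of $\Gamma_0$ come out exactly right, so that Lemma~\ref{cover} applies. A secondary, more routine, point is to confirm that the pre-weight band clean-up in the last paragraph terminates and genuinely does not disturb the covering $Z_0\to\overline{Y'_0}$.
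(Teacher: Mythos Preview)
Your overall architecture---restrict to $Z_0=\i_0^{-1}(\overline{Y'_0})$, extend over the missing weight $0$ part, then read off the type---matches the paper's, but there are two points where you diverge and one of them is an actual error.

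First, the bookkeeping with weight $\frac12$ is wrong. Since $\overline{Y'_0}$ is obtained by omitting only weight $0$ bands, the weight $\frac12$ bands of $Y'_0$ lie in $\overline{Y'_0}$, so their preimages (including pre-weight $\frac12$ bands) already sit in $Z_0$ and are covered by hypothesis. The bands of $Y_0$ outside $Z_0$ are exactly the weight $0$ and pre-weight $0$ bands, nothing more; there is no extension step over weight $\frac12$ bands and no need to add M\"obius bands. (Your parenthetical that a weight $\frac12$ band ``becomes weight $0$ after one subdivision'' is also false: subdividing a M\"obius band along a vertical fiber does not make it an annulus.) The paper's proof accordingly only discusses weight $0$ bands of $Y'_0$ and how their preimages in $Y_0$ organize into weight $0$ bands and chains of pre-weight $0$ bands.

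Second, your type argument goes through Process~III collapses on $Y_0$ itself, whereas the paper argues inside the extended cover $Y_0^*$. The paper's route is shorter and avoids a genuine difficulty you have not addressed: after adding a band $\BB'$ closing up a chain of pre-weight $0$ bands, one can simply \emph{slide} $\BB'$ across that chain (an (M4) move, manifestly type-preserving) so that $\BB'$ becomes weight $0$; the result $Y_0^{**}$ differs from $Y_0$ only by weight $0$ bands, so they share a type, and $Y_0^{**}$ has the type of $Y_0^*$, which is a cover of $Y'_0$. By contrast, your Process~III collapses may force folds (Proposition~\ref{maimmersion}) that identify two bands of $Z_0$ whose bases have been pushed into the same block---this can drop the sheet count over a band of $\overline{Y'_0}$, so the claim that ``the restriction of the induced morphism over $\overline{Y'_0}$ is unchanged'' is not automatic and needs a real argument. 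What you flag as ``more routine'' is where the work lies in your approach; the paper sidesteps it entirely.
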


\begin{proof}
We may assume that $\i_0$ is a partial covering map (i.e. In $Y'_0$, the image of each weight $0$ or pre-weight $0$ band of $Y_0$ is a generalized band, not a proper sub-generalized band). Otherwise, we may archive this by subdividing weight $0$ bands in $Y'_0$ and their preimages in $Y_0$ correspondingly. For a given weight $0$ band $\BB_{Y'_0}$ with base $b_{Y'_0}$ in $Y'_0$, let $b_{Y_0}^1, \dots, b_{Y_0}^n\subset \Gamma_{Y_0}$ be preimages of $b_{Y'_0}$ in the real graph of $Y_0$. Every connected component of the union of preimages of $\BB_{Y'_0}$ in $Y_0$ is either a weight $0$ band or a consecutive sequence of pre-weight $0$ bands. 

If $\i_0$ is in fact a local isomorphism, let $Y^*_0=Y_0$ and we are done by Lemma~\ref{cover}. Otherwise there exists some weight $0$ band $\BB_{Y'_0}$ in $Y'_0$ such that $\i_0$ is not surjective near some $b_{Y_0}^k$. Since $\i_0$ is a partial covering map, there are only two possibilities. One is that no band in $Y_0$ with base $b_{Y_0}^k$ maps to $\BB_{Y'_0}$. We may fix this by adding a weight $0$ band $\BB^k_{Y_0}$ to $Y_0$ along $b^k_{Y_0}$ and defining $\i^*_0$ to be the map that maps $\BB^k_{Y_0}$ onto $\BB_{Y'_0}$. 

The other case is that there exits exactly one pre-weight $0$ band $\BB$ with a base $b^k_{Y_0}$ mapping onto $\BB_{Y'_0}$. Then $\BB$ must be one end of a consecutive sequence of pre-weight $0$ bands mapping onto $\BB_{Y'_0}$. One end base of this sequence is $b_{Y_0}^k$ and the other end base must be one of $\{b_{Y_0}^1, \dots, b_{Y_0}^n\}$, say $b^{k'}_{Y_0}$. We may fix this situation by adding a band $\BB'$ to $Y_0$ with one of its bases attached along $b^k_{Y_0}$ and the other base attached along $b^{k'}_{Y_0}$ (orient in the same direction) and defining $\i^*_0$ to be the map that maps $\BB'$ onto $\BB_{Y'_0}$. 

Apply above attaching rule to every such $b^k_{Y_0}$ (for a fixed $\BB_{Y'_0}$) and for every weight $0$ band in $Y'_0$. Let the resulting union of bands be $Y^*_0$. $\i^*_0: Y^*_0\to Y'_0$ is a local isomorphism by construction and so $Y^*_0$ is a finite covering space of $Y'_0$. Moreover in $Y^*_0$, we may alter each $\BB'$ to a weight $0$ band by sliding one of its bases across the consecutive sequence of pre-weight $0$ bands containing $\BB$. Denote the resulting union of bands by $Y^{**}_0$. Since $Y_0$ only differs from $Y^{**}_0$ by weight $0$ bands,  $Y_0$ has the same type as $Y^{**}_0$, and further the same as $Y^*_0$ and $Y'_0$. 
\end{proof}

\vspace{1pc}
In the proof of Lemma~\ref{coveruptozero}, for a union of bands $Y'_0$, we started with a special partial cover $Y_0$ and completed it into a finite cover $Y^*_0$. More generally, mimicking the argument for completing a partial cover of graphs in \cite{stal83}, the following lemma shows that a similar completing process can be done for any partial cover of band complexes.

\begin{lemma}\label{completecover}
   Let $(Y_0\overset{\i_0}{\to}Y'_0)$ be a pair of components with $\i_0:Y_0\to Y'_0$ a partial covering map. Then by adding finitely many arcs to the real graph $\Gamma_{Y_0}$ (using move $(M7)$) and attaching finitely many new bands to $Y_0$ (using move $(M6)$), we may extend $\i_0$ into a finite covering map. Moreover, assume that $Y_0$ contains no pre-weight $\frac{1}{2}$ bands (resp. pre-weight $0$ bands), then the newly constructed $Y_0$ also contains no pre-weight $\frac{1}{2}$ bands (resp. pre-weight $0$ bands). In particular, on the level of fundamental groups, $\pi_1(Y_0)$ is either a finite index subgroup of $\pi_1(Y'_0)$ or a free factor of a finite index subgroup of $\pi_1(Y'_0)$. 
\end{lemma}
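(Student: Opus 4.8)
The statement is a band-complex analogue of Stallings' completion of a partial covering of graphs, so I would mimic that argument one dimension up. The key observation is that a partial covering map $\i_0 \colon Y_0 \to Y'_0$ fails to be a covering only because of two kinds of local defects: first, at a point $q \in \Gamma_{Y_0}$ with $\i_0(q) = p$, there may be a band $\BB_{Y'_0}$ at $p$ with no preimage band at $q$ (a ``missing direction''); second, a consecutive chain of pre-weight-$0$ (or pre-weight-$\frac12$) bands over a single band of $Y'_0$ may have a free end, i.e. a base $b^k_{Y_0}$ where the chain stops but the corresponding band of $Y'_0$ continues. In both cases the remedy is exactly the one already used in the proof of Lemma~\ref{coveruptozero}: attach a new band by (M6) (after, if necessary, adding an arc to $\Gamma_{Y_0}$ by (M7) to provide a base for it), mapped by the extended $\i^*_0$ homeomorphically onto the offending band of $Y'_0$, pairing up free ends as in the graph case. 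Since $Y'_0$ has only finitely many bands and $\i_0$ is a partial covering map of finite-sheeted type over each, there are only finitely many defects, so finitely many (M6)/(M7) moves suffice.

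First I would set up the bookkeeping: for each band $\BB_{Y'_0}$ of $Y'_0$, look at the preimage $\i_0^{-1}(\BB_{Y'_0})$, which by the partial-covering hypothesis is a finite disjoint union of bands and consecutive chains of bands each mapping homeomorphically onto a sub-band of $\BB_{Y'_0}$; because $\i_0$ is graph-like on the real graph, over each preimage $b^k_{Y'_0}$-point the ``missing band'' defect is detected purely on $\Gamma_{Y_0}$, as noted in Remark~\ref{realgraphnbhd}. Then for each point of $\Gamma_{Y_0}$ where a direction is missing, either a whole band of $Y'_0$ has no representative (attach a fresh band onto the existing base, or onto a newly (M7)-added arc if no base is present there) or a pre-weight chain terminates prematurely (attach a band joining the two loose ends $b^k_{Y_0}$, $b^{k'}_{Y_0}$, oriented compatibly). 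Having done this for every defect, the resulting $\i^*_0 \colon Y^*_0 \to Y'_0$ is both an immersion and a submersion near $\Gamma_{Y^*_0}$, hence by Remark~\ref{realgraphnbhd} a local isomorphism, so by Lemma~\ref{cover} it is a finite covering map. The ``moreover'' about weights is immediate: every band we add is a weight-$1$ band mapping onto a band of $Y'_0$ of the same weight as the chain it completes, so if $Y_0$ had no pre-weight-$\frac12$ (resp.\ pre-weight-$0$) bands then the defects we were repairing all lay over weight-$0$ (resp.\ weight-$\frac12$ and weight-$1$) bands and the new bands inherit those weights, introducing no new pre-weight-$\frac12$ (resp.\ pre-weight-$0$) bands. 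Finally the $\pi_1$ statement: $Y_0 \hookrightarrow Y^*_0$ is an inclusion obtained by attaching bands (and possibly arcs), so $\pi_1(Y_0)$ is a free factor of $\pi_1(Y^*_0)$ if any arcs or non-separating completing bands were needed and equals $\pi_1(Y^*_0)$ otherwise; and $\pi_1(Y^*_0)$ is a finite-index subgroup of $\pi_1(Y'_0)$ since $\i^*_0$ is a finite covering map. Combining, $\pi_1(Y_0)$ is either a finite-index subgroup of $\pi_1(Y'_0)$ or a free factor of one.

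**The main obstacle.** The delicate point is not the topology but keeping the ambient structural assumptions intact and making the two-dimensional pairing of loose ends genuinely finite and consistent. In the graph case one simply matches up, over each edge and each sheet, the vertices with missing lifts; here one must match \emph{bases} $b^k_{Y_0}$ lying in possibly different blocks of $\Gamma_{Y_0}$, and after attaching a completing band one must re-inspect whether $(A1)$ still holds for $\Gamma_{Y^*_0}$ — it typically will not, since (M7) produces a simplicial forest rather than a disjoint union of edges — so one invokes the replacement procedure of Proposition~\ref{wideoverlap} / Proposition~\ref{maimmersion} to restore $(A1)$ and re-immerse, exactly as was flagged in the remark preceding Proposition~\ref{wideoverlap}. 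One must check that this restoration does not re-open a defect (it does not: folding and subdividing preserve the covering property near the real graph) and does not create new pre-weight-$\frac12$ or pre-weight-$0$ bands beyond those permitted by the hypothesis. So the real content is a careful finiteness-and-termination argument threaded through the assumptions $(A1)$–$(A5)$, with the genuinely new 2-dimensional ingredient — attaching completing bands by (M6) — being routine once the chain/defect structure over each band of $Y'_0$ is laid out.
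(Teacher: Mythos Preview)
Your approach is the paper's: regard blocks as vertices and generalized bands as edges, then run Stallings' completion of a partial cover of graphs, treating bands whose two bases lie in the same block via the chain-pairing argument of Lemma~\ref{coveruptozero} and bands whose bases lie in distinct blocks by pairing up unpaired preimage blocks and attaching a band between each pair.

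Two corrections. First, your ``main obstacle'' about restoring $(A1)$ via Propositions~\ref{wideoverlap}/\ref{maimmersion} is not needed: the arcs added by (M7) are fresh disjoint edges serving as extra preimage blocks, so $\Gamma_{Y^*_0}$ stays a disjoint union of edges and no folding enters. The paper's proof never invokes that machinery. Second, your weight argument is misstated. You assert that every band you add has weight $1$; but a weight-$1$ band mapping onto a weight-$\frac12$ (resp.\ weight-$0$) band of $Y'_0$ is by definition a pre-weight-$\frac12$ (resp.\ pre-weight-$0$) band, precisely what the ``moreover'' clause forbids. The correct argument is that over a weight-$\frac12$ or weight-$0$ band of $Y'_0$ you attach the completing band as a M\"obius band or annulus of the \emph{same} weight; this is possible because, under the hypothesis that $Y_0$ has no pre-weight-$\frac12$ (resp.\ pre-weight-$0$) bands, all existing preimages of such a band already have the matching weight, and the missing sheets can be filled in likewise.
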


\begin{proof}

$\Gamma_{Y_0}$ and $\Gamma_{Y'_0}$ contain only finitely many blocks. Note that $\i_0$ is a finite covering map is equivalent to that $\i_0$ is a local isomorphism near every block. $Y'_0$ and $Y_0$ can be viewed as graphs by considering each block as a vertex and each generalized band as an edge. $\i_0$ then can be considered as an immersion between finite graphs with the property that each edge in $Y_0$ is mapped exactly onto one edge in $ Y'_0$.

The preimages of a block $K \subset \Gamma_{Y'_0}$ are finitely many blocks $K_1,\dots,$ $ K_{n(K)}$ in $\Gamma_{Y_0}$. Different blocks in $\Gamma_{Y'_0}$ may have different numbers of preimages ($n(K)$ depends on $K$). Let $n=\max\{n(K)\}_K$. For blocks in $\Gamma_{Y'_0}$,  whose number of preimages is less than $n$, add extra arcs (move $M7$) to $\Gamma_{Y'_0}$ to complete $\i_0$ as a covering map on the level of real graphs. Then we need to complete $\i_0$ for the level of bands. Let $\BB_{Y'_0}\subset Y'_0$ be a generalized band. If $b_{Y'_0}$ and $dual(b_{Y'_0})$ are contained in the same block, processed exactly as in Lemma~\ref{coveruptozero}. Otherwise, let the block containing $b_{Y'_0}$ be $K$, the block containing $dual(b_{Y'_0})$ be $K'$ and $K\neq K'$. At each preimage $K_i$ of $K$, there is either no band or exactly one band $\BB^i_{Y_0}$ maps to $\BB_{Y'_0}$ since $\i_0$ is an immersion. The band $\BB^i_{Y_0}$ uniquely determines one preimage of $K'$. Thus each preimage of $\BB_{Y'_0}$ in $Y_0$ groups one block in $\{K_1, \dots, K_n\}$ with one 
block in $\{K'_1,\dots, K'_n\}$. After paring up like this, there are the same numbers of $\{K_i\}$ and $\{K'_i\}$ left unpaired. We may then pair up these left blocks randomly. For each of these new pairs, $(K_i, K'_j)$, attach a new band $\BB^*_{Y_0}=b^*_{Y_0}\times I_N$ with the property that $m(b^*_{Y_0})=m(b_{Y'_0})$ and $N=l(\BB_{Y'_0})$ to $Y_0$ in the following way:
glue $b^*_{Y_0}$ to $b_{Y'_0}$'s preimage in $K_i$ and glue $dual (b^*_{Y_0})$ to $dual (b_{Y'_0})$'s preimage in   $K'_i$. Further extend $\i_0$ into $\BB^*_{Y_0}$ by mapping it to $\BB_{Y'_0}$. Do the above process for all generalized bands in $Y'_0$. It is easy to check that the resulting union of band is then a finite cover of $Y'_0$. 

In the case that $Y_0$ contains no pre-weight $\frac{1}{2}$ (resp. pre-weight $0$) band,  the set of preimages of weight $\frac{1}{2}$ (resp. weight $0$) bands in $Y'_0$ contains only weight $\frac{1}{2}$ (resp. weight $0$) bands in $Y_0$. Restricting $\i_0$ to the union of weight $\frac{1}{2}$ (resp. weight $0$) bands in $Y'_0$, $Y_0$ can be convert to a finite covering by adding only weight $\frac{1}{2}$ (resp. weight $0$) bands. Thus the new constructed $Y_0$ contains no pre-weight $\frac{1}{2}$ (resp. pre-weight $0$) band.
\end{proof}

%\begin{lemma}
   %If $(Y_0, X_0)$ are of the same minimal type, and both of them are in the standard form, then there is no pre-weight 0 band in $Y_0$.
%\end{lemma}

%The following lemma is proved in section \ref{thin}. 

%\begin{lemma}
  % Let $X_0$ be a thin component, $\BB$ be a subband of $X_0$, either $\BB$ collapse within finite steps or $\BB$ contains points in the limit set.
%\end{lemma}

%\vspace{1pc}

%Let $(X_0, Y_0), (X_1, Y_1), \dots$ be pairs of components formed by the relative Rips machine. Intuitively, if we could show the claim that eventually $Y_n$ is a partial cover of $X_n$ up to weight $0$ bands, then the output of the machine follows from previous lemmas. To be more precise, if $Y_n$ is actually a finite cover of $X_n$ up to weight $0$ bands, then they are the same type by lemma \ref{coveruptozero}. Otherwise if there is no pre-weight $0$ bands in $Y_0$, apply lemma \ref{completecover} to $(Y_n, X_n)$ to get $\widetilde{Y}_n$. $\widetilde{Y}_n$ is in the standard form then by lemma \ref{notonto}, $Y_n$ is simplicial since it is a proper subband complex of $\widetilde{Y}_n$. For the case where $Y_0$ contains pre-weight $0$ bands, we may apply lemma \ref{withpreweight0}. However, the claim is not true in general. We will show a very close statement as follows.

In the proof of Proposition~\ref{SOF}, we will need to complete an almost partial cover into a cover. The following lemma shows that we may first complete an almost partial cover into a partial cover and then, using Lemma~\ref{completecover}, complete the partial cover into a finite cover. 

\begin{lemma} \label{completepartialcover}
Let $(Y_0\overset{\i_0}{\to}Y'_0)$ be a pair of components and $\i_0:Y_0\to Y'_0$ be an almost partial covering map. Then there exists a pair of components $(Y^*_0\overset{\i^*_0}{\to}Y'_0)$ with the property that $\i^*_0$ is a partial covering map, $Y_0\subset Y^*_0$ and $\i^*_0|_{Y_0}=\i_0$. In particular, $Y^*_0$ can be constructed by adding finitely many arcs to the real graphs $\Gamma_{Y_0}$ and attaching finitely many bands to $Y_0$. %This is called  extending $\i_0$ to a partial covering map.
\end{lemma}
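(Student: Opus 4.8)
The plan is to imitate the graph-theoretic completion of an immersion to a cover (Stallings), carried out band by band over $Y'_0$, with property $(*_6)$ of Proposition~\ref{mimicgraph} supplying the combinatorial control that is automatic for ordinary graphs. The only obstruction to $\i_0$ being graph like (hence a partial covering map in the sense of Definition~\ref{apcover}) is that a generalized band of $Y_0$ may map onto a \emph{proper} sub-generalized band of a band of $Y'_0$; $(*_6)$ says these images are nested into a single sub-band per band of $Y'_0$, so the defect is localized and finite, and it can be removed together with a Stallings-type completion over the weight-$0$ part.

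First I would \emph{refine so that $\i_0$ becomes graph like}. Fix a positive-weight generalized band $\BB_{Y'_0}\subset Y'_0$ with preimage bands $\BB^1_{Y_0},\dots,\BB^{k}_{Y_0}$ in $Y_0$; by $(*_6)$ their images $\widehat\BB^j_{Y_0}$ are sub-generalized bands of $\BB_{Y'_0}$ whose union is a single sub-generalized band and which share a vertical fiber of $\Omega_{Y'_0}$. Viewing $b_{Y'_0}$ as an interval, the $\widehat\BB^j_{Y_0}$ sit over subintervals with a common point, so the finitely many endpoints of these subintervals form a finite subset of $\Gamma_{Y'_0}$. Using $(M10)$, subdivide $X'$ (hence $Y'_0$, compatibly with $Y_0$) at all these points, for every positive-weight band of $Y'_0$; this is a harmless move suppressed from the notation, as in the proof of Lemma~\ref{coveruptozero} — it changes neither the pair of resolved trees nor the type of $Y'_0$ — and after it each generalized band of $Y_0$ breaks into sub-bands each mapping \emph{onto} a generalized band of $Y'_0$. (Here $(*_6)$ is exactly what makes the refinements coming from different $\BB^j_{Y_0}$ mutually consistent.) The weight-$0$ and weight-$\tfrac12$ bands carry no $(*_6)$ constraint, but $\i_0$ is still an immersion, so $\widehat Y_0\cap\BB_{Y'_0}$ is a finite union of sub-bands for such $\BB_{Y'_0}$, and we subdivide at their finitely many endpoints in the same way. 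Afterwards $\i_0$ is a graph-like immersion, but $Y_0$ need not yet be a cover.

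Next I would \emph{complete over the weight-$0$ and weight-$\tfrac12$ part}, which is the construction in the proof of Lemma~\ref{coveruptozero} carried out now for all weight-$0$ (resp.\ weight-$\tfrac12$) bands of $Y'_0$ rather than under a local-isomorphism hypothesis: for each weight-$0$ band $\BB_{Y'_0}$ and each preimage $b^k_{Y_0}\subset\Gamma_{Y_0}$ of its base over which no band of $Y_0$ maps onto $\BB_{Y'_0}$, either attach (via $(M6)$, after an arc added by $(M7)$ if needed) a new weight-$0$ band along $b^k_{Y_0}$ mapping onto $\BB_{Y'_0}$, or, if $b^k_{Y_0}$ is an end of a consecutive chain of pre-weight-$0$ bands projecting onto $\BB_{Y'_0}$, close that chain with a single new band joining its two ends; do the same for weight-$\tfrac12$ bands. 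As in Lemma~\ref{coveruptozero}, these added bands are genuine weight-$0$ (resp.\ weight-$\tfrac12$) bands, or become so after an application of the Rips machine, so no pre-weight band of a new kind is created. This introduces the finitely many new arcs and bands asserted in the statement; call the resulting pair $(Y^*_0\overset{\i^*_0}{\to}Y'_0)$. Finally I would \emph{restore the immersion property}: $\i^*_0$ is graph like and restricts to $\i_0$ on $Y_0$ (we only added cells), but two of the new bands, or a new band and an old one, may have a wide overlap; by Proposition~\ref{maimmersion} a finite sequence of folds of graphs and of bands turns $\i^*_0$ into an immersion, and since folds only identify generalized bands with the same image they preserve graph-like-ness, preserve that $Y_0$ embeds (the folds only absorb newly added bands), and preserve $\i^*_0|_{Y_0}=\i_0$; one then returns $\Gamma_{Y^*_0}$ to form $(A1)$ using Proposition~\ref{wideoverlap} and $(M3)$ if necessary.

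I expect the main obstacle to be the bookkeeping of pre-weight-$0$ and pre-weight-$\tfrac12$ bands in the completion step: one must, for each weight-$0$ band of $Y'_0$, correctly identify the consecutive chains of pre-weight-$0$ bands lying over it and close up each chain exactly once, so that the result is simultaneously graph like and free of newly created pre-weight bands — essentially re-running the analysis of Lemma~\ref{coveruptozero} in the presence of the Step-one refinement. A secondary technical point is verifying that the suppressed subdivisions of $Y'_0$ and the band attachments do not interfere with one another, and that after folding the real graph can indeed be put back into the standard form $(A1)$ without disturbing what has been arranged.
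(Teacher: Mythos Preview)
Your route diverges from the paper's in a basic way, and the divergence causes a genuine problem with the statement as written.

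The paper does \emph{not} touch $Y'_0$. Instead, for each generalized band $\BB_{Y_0}\subset Y_0$ that maps properly into a band $\BB_{Y'_0}\subset Y'_0$, it simply \emph{widens} $\BB_{Y_0}$: if the block containing $b_{Y_0}$ (resp.\ $dual(b_{Y_0})$) already contains a segment mapping onto $b_{Y'_0}$ (resp.\ $dual(b_{Y'_0})$), use it; otherwise add a short arc to $\Gamma_{Y_0}$ via $(M7)$; then attach a new band extending $\BB_{Y_0}$ so that the enlarged band maps onto $\BB_{Y'_0}$. The almost-partial-covering hypothesis is used exactly once, and in a very local way: it guarantees that no \emph{other} band with a base in the same block as $b_{Y_0}$ maps into $\BB_{Y'_0}$ (such a band would overlap $\BB_{Y_0}$, contradicting that $\i_0$ is an immersion), so the widening creates no new failures of injectivity. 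Finitely many such widenings give $Y_0^*$ and $\i_0^*$, and $Y'_0$ is untouched.

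Your Step~1 instead subdivides $Y'_0$. That does make $\i_0$ graph like, but over a \emph{different} target; the conclusion of the lemma is a pair $(Y_0^*\overset{\i_0^*}{\to} Y'_0)$ with the same $Y'_0$, and the ``in particular'' clause asserts that $Y_0^*$ is obtained from $Y_0$ by $(M6)$ and $(M7)$ alone. Subdividing $Y'_0$ cannot be undone afterwards without destroying graph-like-ness, so your construction does not prove the statement as phrased (it would suffice for the application in Proposition~\ref{SOF}, but that is a weaker claim). Two smaller points: weight-$\tfrac12$ bands \emph{are} positive weight, so $(*_6)$ does apply to them; and your Steps~2--3 are unnecessary in any case, since a partial covering map is merely a graph-like immersion (Definition~\ref{apcover}) and requires no local surjectivity over the weight-$0$ part --- you appear to be importing the completion-to-a-cover argument from Lemma~\ref{completecover}, which is the \emph{next} step in Proposition~\ref{SOF}, not this one.
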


\begin{proof}
Let $\BB_{Y'_0}$ be a generalized band in $Y'_0$ and $\BB_{Y_0}$ be a generalized band in $Y_0$ that maps properly into (not onto) $\BB_{Y'_0}$. Further, let $b_{Y_0}$ and $dual(b_{Y_0})$ be bases of $\BB_{Y_0}$, $b_{Y'_0}$ and $dual(b_{Y'_0})$ be bases of $\BB_{Y'_0}$. Then at least one of the endpoints of $b_{Y_0}$, denote it by $q$, maps to an interior point $p\in b_{Y'_0}$. $\i_0$ is an almost partial covering map implies that there is no other band with its base(s) contained in the same block as $b_{Y_0}$ or $dual(b_{Y_0})$ mapping into $\BB_{Y'_0}$. Otherwise that band must have some overlap with $\BB_{Y_0}$ which contradicts to $\i_0$ is an immersion. Further we may assume that within the block containing $b_{Y_0}$ (resp. $dual(b_{Y_0})$), there is a segment $b^*_{Y_0}$ (resp. $dual(b^*_{Y_0})$) maps onto $b_{Y'_0}$ (resp. $dual(b_{Y'_0})$). Otherwise, we may obtain $b^*_{Y_0}$ or $dual(b^*_{Y_0})$ by adding new arcs to $\Gamma_{Y_0}$ ($\gamma'$ in Figure~\ref{F6}). Then by attaching a new band to $Y_0$, $\BB_{Y_0}$ can be extended into a new band whose bases are $b^*_{Y_0}$ and $dual({b}^*_{Y_0})$.
Let the resulting band be $\BB^*_{Y_0}$. We then define $\i^*_0$ be the same as $\i_0$ on $Y_0$ and maps $\BB^*_{Y_0}$ onto $\BB_{Y'_0}$. Finally since there are only finitely many generalized bands, $Y_0$ can be completed into a finite partial cover $Y^*_0$ within finite steps. 
\end{proof}

\begin{figure}[h]
\centering
\begin{tikzpicture}[thick, scale=0.9]
\footnotesize
\draw [fill=lightgray](0,-2)--(0,0)--(2,0)--(2,-2)--(0,-2);
\draw (0,0)--(2,0)--(2, 2)--(0,2)--(0,0);

\draw [fill=lightgray] (0.5,0)--(0.5,2)--(2, 2)--(2,0)--(0.5,0);

\coordinate (q) at (0.5,0);
\filldraw(q) circle (2pt);
\node [above] at (0.4,0) {$p$};
\node [right] at (2, 0) {$b_{Y'_0}$};
\node [above] at (1, 0.8) {$\BB_{Y'_0}$};
\node [right] at (2, 2) {$dual(b_{Y'_0})$};
\node [left] at (-1,0) {$Y'_0$:};

\draw [fill=lightgray] (7,-2)--(7,0)--(9,0)--(9,-2)--(7,-2);

\draw [fill=lightgray] (7,0)--(9,0)--(9, 2)--(7,2)--(7,0);
\node [above] at (8, 0.8) {$\BB_{Y'_0}$};
\end{tikzpicture}

\begin{tikzpicture}[thick, scale=0.9]
\footnotesize
\draw (0,-2)--(0,0)--(2,0)--(2,-2)--(0,-2);
\draw  (0.5,0)--(0.5,2)--(2, 2)--(2,0)--(0.5,0);

\coordinate (p) at (0.5,0);
\filldraw(p) circle (2pt);
\node [above] at (0.4,0) {$q$};
\draw [red] (0,-0.05)--(2,-0.05);
\node [right][red] at (2, 0) {$b^*_{Y_0}$};
\node [above] at (1.25,2) {$dual(b_{Y_0})$};
\node [above] at (1.25,0) {$b_{Y_0}$};
\node [above] at (1.25, 0.8) {$\BB_{Y_0}$};
\node [left] at (-1,0) {$Y_0$:};

\draw [->] (1,2.8)--(1,3.5);
\draw [->]  (4,1)--(6,1);
\draw [->](8, 2.8)--(8, 3.5);

\draw  (7,-2)--(7,0)--(9,0)--(9,-2)--(7,-2);
\draw  (7,0)--(9,0)--(9, 2)--(7,2)--(7,0);
\node [above] at (8,0.8) {$\BB_{Y_0}$};

\draw [ultra thick] (7,2)--(7.5,2);
\node [above] at (7.25,2) {$\gamma'$};
\end{tikzpicture}
\caption{\footnotesize The gray part within $Y'_0$ is $\widehat{Y}_0$.} \label{F6}      
\end{figure}
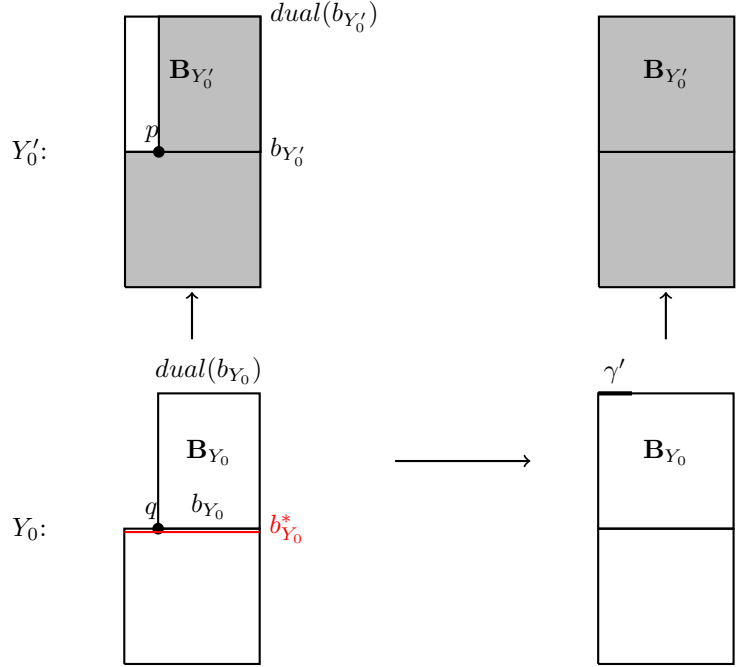

For a pair of components $(Y_0\overset{\i_0}{\to}Y'_0)$, we will now start to discuss the type of $Y_0$ when $Y_0$ is not a finite cover of $Y'_0$. In Lemma~\ref{notonto}, we will show that if $Y'_0$ is a surface or thin component and the relative Rips machine does not convert $\i_0$ into a map which is surjective (i.e. $ \widehat{Y}_0\subsetneq Y'_0$), then $Y_0$ must be simplicial. In Proposition~\ref{SOF}, we will discuss the case where $\i_0$ is surjective but not locally surjective.

\begin{lemma}\label{notonto}
   Let $(Y_0\overset{\i_0}{\to}Y'_0)$ be a pair of components with $\i_0$ an immersion. Further assume that $Y'_0$ is either of thin type or surface type and it is in a standard form. If there exists a point $p$ in $Y'_0$ with $i_{Y'_0}(p)\geq 0$ such that it has no pre-image in $Y_0$, then $Y_0$ is simplicial. In particular, the fundamental group of $Y_0$ is an infinite index subgroup of the fundamental group of $Y'_0$. 
\end{lemma}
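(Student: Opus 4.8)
Here is my plan for proving Lemma~\ref{notonto}.

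The plan is to exploit the standard form of $Y'_0$ together with the dynamics of the foliation. Suppose for contradiction that $Y_0$ is not simplicial; by assumption $(A2)$ and Proposition~\ref{simormin}, $Y_0$ is then a minimal component, so every leaf of $Y_0$ is dense in $Y_0$. First I would observe that since $Y'_0$ is a surface or thin component in standard form, the limit graph $\Omega_{Y'_0}\cap\Gamma_{Y'_0}$ is dense in $\Gamma_{Y'_0}$ (item (3) or (4) of Proposition~\ref{character}), and in particular points $p$ with $i_{Y'_0}(p)\geq 0$ are dense. The key point is that a point $p\in Y'_0$ with $i_{Y'_0}(p)\geq 0$ has at least two infinite directions, so through $p$ there passes a leaf $\ell_p$ which is infinite (in fact, for surface/thin components, dense) in at least two directions. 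I would then show that if a single such $p$ has no preimage in $Y_0$, the same is true for an open dense set of points, or at least for enough points to force $\widehat Y_0 = \i_0(Y_0)$ to miss a ``large'' pushing-saturated-type subset of $Y'_0$.

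The main step: I would argue that $\i_0(Y_0) = \widehat Y_0$ is a band subcomplex (or sub-union-of-bands) of $Y'_0$ that is invariant under the foliation in the sense that it is a union of leaves, up to the real-graph bookkeeping — more precisely, since $\i_0$ is an immersion and maps leaves to leaves, the image of a leaf of $Y_0$ is contained in a leaf of $Y'_0$, and the closure of $\widehat Y_0$ is a pushing-saturated subset of $Y'_0$. Now if $Y_0$ were minimal, $\widehat Y_0$ would contain the image of a dense leaf; but a leaf of a minimal component $Y_0$ is dense in $Y_0$, hence its image is a connected foliated subset whose closure is pushing-saturated in $Y'_0$. Because $Y'_0$ is itself minimal, the only pushing-saturated subsets of $Y'_0$ that are ``substantial'' (meet a band in more than a measure-zero set) are dense in $Y'_0$. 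This would force $\widehat Y_0$ to be dense in $Y'_0$, hence to contain points arbitrarily close to $p$; combined with the local structure near $p$ (where $\i_0$ is a homeomorphism on bands onto subbands, by $(A3)$, so the image locally contains a neighborhood's worth of band material around any image point), I would derive that $p$ itself, or at least a point with the same index, lies in $\widehat Y_0$ — contradicting that $p$ has no preimage. The cleanest way to run this is probably: the set of points of $Y'_0$ with a preimage in $Y_0$ is both closed (image of the compact-leaf-free decomposition is closed on the band level) and, if $Y_0$ is minimal, open near $\Gamma_{Y'_0}$; connectedness of $Y'_0$ then gives surjectivity, a contradiction.

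For the final sentence, once $Y_0$ is simplicial, $\pi_1(Y_0)$ is free of finite rank (it is an $I$-bundle over a compact leaf by Proposition~\ref{simormin}), while $\pi_1(Y'_0)$ is the fundamental group of a minimal surface or thin band complex, which contains a surface group or a free group carrying a nontrivial measured lamination — in either case $\pi_1(Y_0)$ cannot have finite index, since a finite-index subgroup of $\pi_1(Y'_0)$ would act on the dual $\R$-tree $T'_0$ minimally with the same (non-simplicial) dynamics, contradicting that $Y_0$ resolves a simplicial action; alternatively, a finite cover of a minimal component is a minimal component of the same type (noted just before Theorem~\ref{t:main}, Part II), so finite index would force $Y_0$ non-simplicial. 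The hardest part, and the step I would spend the most care on, is making precise the claim that the image $\widehat Y_0$ of a minimal component under an immersion is pushing-saturated in $Y'_0$ and hence dense — getting the interaction between the leaf-level density in $Y_0$ and the band-level local homeomorphism onto subbands exactly right, rather than hand-waving ``image of dense is dense.''
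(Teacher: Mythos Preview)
Your instinct to argue by contradiction --- assume $Y_0$ minimal and show $\widehat Y_0$ must then be all of $Y'_0$ --- points in the right direction, but the mechanism you propose does not work. The claim that $\widehat Y_0$ (or its closure) is pushing-saturated in $Y'_0$ is exactly the step that fails, and you are right to flag it as the hardest part. Concretely: $\widehat Y_0$ is a finite union of sub-generalized bands of $Y'_0$. At a point $q=\i_0(\tilde q)\in\widehat Y_0\cap\Gamma_{Y'_0}$ there may be a band $\BB'$ of $Y'_0$ with $q$ in its base but with no band of $Y_0$ through $\tilde q$ mapping into $\BB'$; one can then push $N(q,\epsilon)$ into $\BB'$ and leave $\widehat Y_0$ immediately. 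Minimality of $Y_0$ tells you leaves of $Y_0$ are dense in $Y_0$, but says nothing about how their $\i_0$-images sit inside the (typically strictly larger) leaves of $Y'_0$. For the same reason the ``closed and open'' variant does not go through: nothing forces $\widehat Y_0$ to be open near $\Gamma_{Y'_0}$.

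The paper bypasses this by arguing leafwise rather than setwise. First reduce (via Lemma~\ref{simplcialimage}) to showing $\widehat Y_0$ is simplicial. Since $\widehat Y_0$ is closed and misses $p$, there is an entire sub-generalized band $\BB_{Y'_0}\ni p$ disjoint from $\widehat Y_0$. If $\widehat Y_0$ were not simplicial it would be surface or thin (Lemma~\ref{subtoral}), and the key structural fact from standard form (Proposition~\ref{character}) is that uncountably many leaves of such a component are $2$-ended trees. Choose a $2$-ended leaf $l\subset\widehat Y_0$ whose ambient leaf $l'\subset Y'_0$ is also $2$-ended. Because $i_{Y'_0}(p)\geq 0$, the line $l'\cap\Omega_{Y'_0}$ crosses $\BB_{Y'_0}$ infinitely often, so every component of $l'\cap\widehat Y_0$ is at most $1$-ended --- contradicting that $l$ is such a component and is $2$-ended. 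This ``$2$-ended leaf chopped infinitely often by the missing band'' is the idea your plan lacks.

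For the infinite-index conclusion, the paper's argument is shorter than your covering route: once $Y_0$ is simplicial, translation lengths of elements of $\pi_1(Y_0)$ on the dual tree are bounded below by some $\epsilon>0$, while $\pi_1(Y'_0)$ (being minimal) contains elements of arbitrarily small translation length; hence the index is infinite.
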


\begin{proof}
 Let $\widehat{Y}_0=\i_0(Y_0)$. By Lemma~\ref{simplcialimage}, we only need to show that $\widehat{Y}_0$ is simplicial. 

$\widehat{Y}_0$ is a band sub-complex (a closed set) in $Y'_0$. By the assumption, $p$ is contained in the complement of $ \widehat{Y}_0$ in $Y'_0$. So there exists a neighborhood of $p$ that is contained in this complement. Without loss, we may assume that $p$ is an interior point of some band in $Y'_0$. Hence, there exists a sub-generalized band $\BB_{Y'_0}$ in $Y'_0$ containing $p$ which is disjoint from $\widehat{Y}_0$.

Assume that $\widehat{Y}_0$ is not simplicial, then by Lemma~\ref{subtoral}, $\widehat{Y}_0$ is either a surface component or a thin component. According to Proposition~\ref{character}, since $Y'_0$ is in a standard form, there is a collection of uncountably many leaves in $\widehat{Y}_0$ which are $2$-ended trees. In particular, they are quasi-isometric to lines \footnote{In the case of surface, all but finitely many of leaves are q.i. to lines, see \cite[section 8]{bfstableactions}.}. Further in $Y'_0$, leaves containing these leaves are also $2$-ended trees expect for finitely many. Therefore we may pick a leaf $l$ in $\widehat{Y}_0$ which is a $2$-ended tree such that the leaf $l'$ in $Y'_0$ containing it is also a $2$-ended tree. $l'$ is quasi isometric to a line contained in the limit set. Let the line be $\overline{l}'$($=l'\cap \Omega_{Y'_0}$). $i_{Y'_0}(p)\geq 0$ implies that $\overline{l}'$ intersects $\BB_{Y'_0}$ infinitely often. Hence, each component of $l'\cap \widehat{Y}_0$ is either a finite tree or an $1$-ended tree. This contradicts to our choice that $l$ is a $2$-ended tree. Thus $\widehat{Y}_0$ must be simplicial, and so is $Y_0$.

The immersion $\i_0: Y_0\to Y'_0$ induces a monomorphism between their fundamental groups. Since $Y_0$ is simplicial, the minimal translation length of elements in the fundamental group of $Y_0$ is bounded below by some positive number $\epsilon$. On the other hand, $Y'_0$ is a minimal component, so there exists sequence of elements in the fundamental group of $Y'_0$ whose translation lengths converge to $0$. Therefore, $\pi_1(Y_0)$ is an infinite index subgroup of $\pi_1(Y'_0)$. 
\end{proof}

\begin{prop}\label{SOF}
   Let $(Y_0\overset{\i_0}{\to}Y'_0)$ be a pair of components. %with $\i_0$ an almost partial covering map. 
   Suppose that $Y'_0$ %is in the standard form and 
   is a surface or thin component. %Further assume that $Y_0$ contains no pre-weight $0$ or $\frac{1}{2}$ band. 
   Then either $Y_0$ is simplicial or omitting weight $0$ bands, the relative Rips machine eventually converts $Y_0$ into a finite cover of $Y'_0$.
\end{prop}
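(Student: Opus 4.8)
The plan is to run the relative Rips machine until the pair stabilizes and then split into two cases according to whether the image $\widehat Y_0$ fills up $\overline{Y'_0}$. First I would apply Proposition~\ref{finalrelation} to reduce to the situation in which $Y'_0$ is already in a standard form, $Y_0$ contains no pre-weight $0$ nor pre-weight $\tfrac12$ band, and $\i_0\colon Y_0\to Y'_0$ is an almost partial covering map; in particular every positive-weight band of $\overline Y_0$ maps onto a positive-weight band of $\overline{Y'_0}$, so $\overline{\i_0}\colon\overline Y_0\to\overline{Y'_0}$ is well defined, and by $(A2)$ and Proposition~\ref{simormin} the component $Y_0$ is either simplicial or minimal. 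Since $Y'_0$ is a surface or thin component in standard form, its limit set $\Omega_{Y'_0}$ is dense in $\overline{Y'_0}$ (Proposition~\ref{character}, together with the fact that a point in the interior of a positive-weight band of a minimal component lies on an infinite leaf and hence has index $\ge 0$). Hence, if $\widehat Y_0=\i_0(Y_0)$ does not contain all of $\overline{Y'_0}$, its complement is a nonempty open set and therefore contains a point $p\in\Omega_{Y'_0}$; then $i_{Y'_0}(p)\ge 0$ and $p$ has no $\i_0$-preimage, so Lemma~\ref{notonto} applies and gives that $Y_0$ is simplicial. This is the first alternative of the statement.

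It remains to treat the case $\widehat Y_0\supseteq\overline{Y'_0}$, i.e. $\overline{\i_0}$ is a surjective almost partial covering map. First note that then $Y_0$ is necessarily minimal: if it were simplicial its image would be simplicial by Lemma~\ref{simplcialimage}, contradicting that $Y'_0$ is surface or thin (and $Y_0$ is not toral by Lemma~\ref{subtoral}). Now apply Lemma~\ref{completepartialcover} to extend $\overline{\i_0}$ to a partial covering map $\i^*_0\colon Y^*_0\to\overline{Y'_0}$ with $\overline Y_0\hookrightarrow Y^*_0$, and then Lemma~\ref{completecover} to extend further to a finite covering map $\i^{**}_0\colon Y^{**}_0\to\overline{Y'_0}$; since $\overline Y_0$ contains no pre-weight $0$ nor pre-weight $\tfrac12$ band, neither does $Y^{**}_0$. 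All the extra arcs and bands used in this completion are attached by moves (M6), (M7), (M8), so $(Y^{**}_0\overset{\i^{**}_0}{\to}Y'_0)$ is obtained from $(Y_0\overset{\i_0}{\to}Y'_0)$ by the relative Rips machine, and $\overline{Y^{**}_0}$ is a finite cover of $\overline{Y'_0}$. This is the second alternative; moreover Lemma~\ref{cover} and Corollary~\ref{standardcover} then identify $Y^{**}_0$ as a standard minimal component of the same type as $Y'_0$, which is the form of the conclusion needed later for Theorem~\ref{t:main}.

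The main obstacle is the surjective case, and specifically the assertion that the completion of the previous paragraph genuinely turns \emph{$Y_0$ itself} (and not some strictly larger complex) into a finite cover. What has to be checked is that, because $\overline{\i_0}$ is \emph{surjective}, the bands added by Lemmas~\ref{completepartialcover} and \ref{completecover} are all weight-$0$ bands, or can be converted to weight-$0$ bands by the Rips machine, exactly as in the proof of Lemma~\ref{coveruptozero}; this is what guarantees $\overline{Y^{**}_0}=\overline{Y_0}$ up to weight-$0$ bands, so that the type of $Y_0$ is unchanged. The input for this is that no positive-weight band $\BB_{Y'_0}$ of $\overline{Y'_0}$ can be ``under-covered'' by $\overline Y_0$: using property $(*_6)$ of the almost partial cover (Proposition~\ref{mimicgraph}) one sees that the preimage bands of $\BB_{Y'_0}$ share a common image fibre in $\Omega_{Y'_0}$ and, by surjectivity, their images union to all of $\BB_{Y'_0}$, so that the completion over $\BB_{Y'_0}$ only glues in redundant copies and no new relation is created — the delicate bookkeeping here is, as in the proof of Proposition~\ref{mimicgraph}, the place where the difference between the perfect-set behaviour of $\Omega_{Y'_0}$ in the thin case and the uniform scaling of Process~II in the surface case really enters. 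A secondary (routine) point is to record that moves (M6)–(M8), although they change $\pi_1$, are used here only in the combinations already sanctioned (cf. (M11) and Remark~\ref{newmoves}), so the machine output still resolves the ambient pair of trees.
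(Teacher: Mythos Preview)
Your first case (the image $\widehat Y_0$ misses a point of $\overline{Y'_0}$, hence a limit point, hence Lemma~\ref{notonto} gives simplicial) is fine and matches the paper. The gap is in the surjective case, and it is a real one.

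You assert that when $\overline{\i_0}$ is surjective the completion of Lemmas~\ref{completepartialcover} and~\ref{completecover} adds only weight-$0$ (or weight-$0$--convertible) material, by analogy with Lemma~\ref{coveruptozero}. This is not justified, and in general is false. Property~$(*_6)$ only says that for each band $\BB_{Y'_0}$ the several preimage images $\widehat\BB^j_{Y_0}$ share a limit fibre and their union is a single sub-band; it does \emph{not} say each $\widehat\BB^j_{Y_0}$ already equals $\BB_{Y'_0}$. So Lemma~\ref{completepartialcover} may genuinely have to enlarge positive-weight bands, and Lemma~\ref{completecover} may add further positive-weight bands to balance degrees. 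Nothing in your argument shows these additions can be slid to weight~$0$. Relatedly, the moves (M6)--(M8) used in the completion are auxiliary constructions, not steps of the relative Rips machine, so the completed complex $Y^{**}_0$ is not ``obtained from $Y_0$ by the machine'' in the sense the proposition requires; the statement asks that \emph{further running the machine on the original pair} eventually makes $\overline Y_n$ a cover of $\overline{Y'_n}$.

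The paper closes this gap differently. It first completes to a finite cover $\widetilde Y^*_0\to Y'_0$ and then applies the dichotomy to the inclusion $Y_0\hookrightarrow\widetilde Y^*_0$ (not to $\i_0$): either $Y_0$ misses a limit point of $\widetilde Y^*_0$ and Lemma~\ref{notonto} gives simplicial, or every sub-band $\CC\subset\widetilde Y^*_0\setminus Y_0$ misses the limit set. In the latter case the component must be thin (a surface component has full limit set, so $\CC$ could not exist), and then the image of each such $\CC$ in $Y'_0$ also misses the limit set, hence by Proposition~\ref{thinprop}(3) it fully collapses after finitely many Process~I steps. Running the machine simultaneously on the three pairs $(Y_0\hookrightarrow\widetilde Y^*_0)$, $(\widetilde Y^*_0\to Y'_0)$, $(Y_0\to Y'_0)$, one gets $Y_n=\widetilde Y^*_n$ for large $n$, which is the desired finite cover. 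This collapsing argument is the missing ingredient in your proposal.
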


\begin{proof}
By Proposition~\ref{finalrelation}, we may assume that $Y'_0$ is in a standard form, $\i_0$ is an almost partial covering map and $Y_0$ contains no pre-weight $0$ bands nor pre-weight $\frac{1}{2}$ bands. In particular, $\i_0(\overline{Y}_0)\subset \overline{Y}'_0$. Therefore we may ignore weight $0$ bands and only work with $(\overline{Y}_0 \overset{\i_0}{\to} \overline{Y}'_0)$ instead. To simplify the notation, without loss, we will assume that both $Y_0$ and $Y'_0$ do not contain any weight $0$ bands. 

First $(Y_0\overset{\i_0}{\to}Y'_0)$ can be extended into $(Y^*_0\overset{\i^*_0}{\to}Y'_0)$ such that ${\i}^*_0$ is a partial covering map as in Lemma~\ref{completepartialcover}. Then by Lemma~\ref{completecover}, $\i^*_0$ can further be extended into a finite covering map $\widetilde{\i}^*_0: \widetilde{Y}^*_0\to Y'_0$. Lemma~\ref{cover} implies that $ \widetilde{Y}^*_0$ has the same type as $Y'_0$ and Corollary~\ref{standardcover} shows that $ \widetilde{Y}^*_0$ is also in a standard form.

For $Y_0\hookrightarrow Y_0^*\hookrightarrow  \widetilde{Y}^*_0 $, we may assume that $Y_0\subsetneq \widetilde{Y}^*_0$, or we are done. If the image of $Y_0$ in $ \widetilde{Y}^*_0$ misses any point in the limit set of $ \widetilde{Y}^*_0$, $Y_0$ is simplicial by Lemma~\ref{notonto}. Otherwise, each 
sub-generalized band $\CC \in \widetilde{Y}^*_0-Y_0$ does not contain any point in the limit set of $\widetilde{Y}^*$ and so image of $\CC$ in $Y'_0$ must also do not contain any point in the limit set of $Y'_0$. In this case, $ \widetilde{Y}^*_0$ must be a thin type component since all but finitely many leaves of a surface component are in the limit set.

By construction, the following diagram commutes.

$$\begindc{\commdiag}[20]
\obj(1,1)[Y0]{$Y_0$}
\obj(3,1)[Y0*]{$Y_0^*$}
\obj(5,1)[Y0']{$\widetilde{Y}_0^*$}
\obj(1,3)[X0]{$Y'_0$}
\mor{Y0}{Y0*}{}[\atright,\injectionarrow]
\mor{Y0*}{Y0'}{}[\atright,\injectionarrow]
\mor{Y0}{X0}{$\i_0$}
\mor{Y0'}{X0}{$\widetilde{\i}^*_0$}[\atright,\solidarrow]
\enddc  $$

Now consider three pairs of components $(Y_0\overset{\i_0}{\to}Y'_0)$, $(Y_0\hookrightarrow \widetilde{Y}_0^*)$ and $(\widetilde{Y}_0^*\overset{\widetilde {\i}^*_0}{\to} Y'_0)$. 
For any given sequence $(Y_0\overset{\i_0}{\to}Y'_0), (Y_1\overset{\i_1}{\to}Y'_1),\dots$ formed by the relative Rips machine, the procedure of obtaining $(Y_1\overset{\i_1}{\to}Y'_1)$ from $(Y_0\overset{\i_0}{\to}Y'_0)$ can be viewed as a combination of obtaining $(\widetilde{Y}_1^*\overset{\widetilde {\i}^*_1}{\to} Y'_1)$ from $(\widetilde{Y}_0^*\overset{\widetilde {\i}^*_0}{\to} Y'_0)$ and then obtaining $(Y_1\hookrightarrow \widetilde{Y}_1^*)$ from $(Y_0\hookrightarrow \widetilde{Y}_0^*)$. 

Let $\widetilde{Y}^*_i$ be the induced intermediate finite cover of $Y'_i$.
By above analysis, images of $\CC$'s in $Y'_0$ does not contain any point in the limit set and so fully collapse within finitely many steps (Proposition ~\ref{thinprop}). As a consequence, $\CC$'s in $\widetilde{Y}^*_0$ also fully collapse within finitely many steps. Therefore $Y_n=\widetilde{Y}^*_n $ for sufficiently large $n$. Thus eventually $Y_n$ is a finite cover of $Y'_n$. 
\end{proof}

Following immediately from Proposition ~\ref{finalrelation}, Lemma ~\ref{subtoral}, Lemma ~\ref{cover} and Proposition~ \ref{SOF}, we have: 

\begin{thm}\label{t:main}
Let $(X\overset{\i}{\to}X')$ be a pair of band complexes and $(Y_0\overset{\i_0}{\to}Y'_0)\subset (X\overset{\i}{\to}X')$ be a pair of components. Suppose that $Y'_0$ is a minimal component, then $Y_0$ is either a minimal component of the same type as $Y'_0$ or $Y_0$ is simplicial. Moreover, $Y_0$ and $Y'_0$ are both surface or thin components if and only if the relative Rips machine eventually converts $(Y_0 \overset{\i} {\to} Y'_0)$ into a pair $(Y_0^* \overset{\i^*} {\to} Y_0^{'*})$ with the property that $\overline{Y_0^*}$ is a finite cover of $\overline{Y_0^{'*}}$.

 %In addition, for sufficiently large $n$, $Y'^n$ is in a standard form and every surface or thin component of $Y^n$ is also in a standard form.

%Let $(Y\overset{\i}{\to}Y')$ be a pair of unions of bands and $(Y^0\overset{\i^0}{\to}Y'^0)=(Y\overset{\i}{\to}Y'), (Y^1\overset{\i^1}{\to}Y'^1),\dots$ be a sequence of unions of bands formed by successive application of the relative Rips machine. For each pair of components $(Y_0\overset{\i_0}{\to}Y'_0)\subset (Y\overset{\i}{\to}Y')$ where $Y'_0$ is a minimal component, $Y_0$ is either a minimal component of the same type as $Y'_0$ or $Y_0$ is simplicial. Moreover, let the pair of components corresponding to $(Y_0\overset{\i_0}{\to}Y'_0)$ in $(Y^n\overset{\i^n}{\to}Y'^n)$ be $(Y_n\overset{\i_n}{\to}Y'_n)$. Then $Y_0$ and $Y'_0$ are both surface or thin components if and only if omitting weight $0$ bands, $Y_n$ is a finite cover of $Y'_n$ for sufficiently large $n$. In addition, for sufficiently large $n$, $Y'^n$ is in a standard form and every surface or thin component of $Y^n$ is also in a standard form.
\end{thm}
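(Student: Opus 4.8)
The plan is to assemble Theorem~\ref{t:main} from Proposition~\ref{finalrelation}, Lemma~\ref{subtoral}, Lemma~\ref{cover} and Proposition~\ref{SOF}, organised according to the type of the minimal component $Y'_0$. Since $Y'_0$ is minimal, it is of toral, surface or thin type. First I would dispose of the toral case: if $Y'_0$ is toral then Lemma~\ref{subtoral}(2) says that $Y_0$ is either simplicial or toral, and in the latter case it is a toral component of the same type as $Y'_0$; this already gives the dichotomy when $Y'_0$ is toral. For the remaining cases I would invoke Proposition~\ref{finalrelation} to run the machine until $Y'_0$ is in a standard form, $\i_0$ is an almost partial covering map, and $Y_0$ has no pre-weight $0$ nor pre-weight $\frac{1}{2}$ band, so that we are in a position to apply Proposition~\ref{SOF}.

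Suppose then that $Y'_0$ is a surface or thin component. By Proposition~\ref{SOF}, either $Y_0$ is simplicial, or the relative Rips machine eventually converts $\overline{Y}_0$ into a finite cover of $\overline{Y}'_0$. In the second case, applying Lemma~\ref{cover} to this finite covering map shows that $\overline{Y_0^*}$ and $\overline{Y_0^{'*}}$ have the same type; since omitting weight $0$ bands does not change the type of a component and the moves of the Rips machine induce a type-preserving correspondence between minimal components (and, by assumption $(A2)$ together with Proposition~\ref{simormin}, every component is simplicial or minimal), it follows that $Y_0$ is a minimal component of the same type as $Y'_0$. This completes the dichotomy. The forward implication of the ``moreover'' equivalence is now immediate: if $Y_0$ and $Y'_0$ are both surface or thin then $Y_0$ is in particular not simplicial, so the second alternative of Proposition~\ref{SOF} must hold, i.e.\ the machine eventually produces $(Y_0^*\overset{\i^*}{\to}Y_0^{'*})$ with $\overline{Y_0^*}$ a finite cover of $\overline{Y_0^{'*}}$.

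For the converse, assume the machine eventually produces a pair $(Y_0^*\overset{\i^*}{\to}Y_0^{'*})$ with $\overline{Y_0^*}$ a finite cover of $\overline{Y_0^{'*}}$. Lemma~\ref{cover}, again combined with the fact that weight $0$ bands and machine moves do not change type, forces $Y_0$ and $Y'_0$ to have the same type; this type is not simplicial since $Y'_0$ is minimal, so it suffices to rule out that $Y_0$ and $Y'_0$ are both toral. This is the step I expect to be the main obstacle. The plan is to argue that a finite-cover relationship cannot persist, even eventually, for a toral pair: running the ordinary Rips machine on $\overline{Y}_0$ alone reduces it to a toral standard form whose rank (equivalently, the rank of the abelian group it resolves) and whose transverse measure are intrinsic to the $H$-tree being resolved, and for a proper finite-index subgroup these invariants are incompatible with $\overline{Y}_0$ being a finite cover of $\overline{Y}'_0$ in the limit --- the obstruction being exactly the phenomenon recorded in Remark~\ref{e:infinitetoralcover}. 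Hence the finite-cover hypothesis excludes the toral case, and together with the dichotomy this yields that $Y_0$ and $Y'_0$ are both surface or thin, completing the proof.
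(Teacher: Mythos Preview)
Your assembly of Part~I and the forward implication of the ``moreover'' clause from Proposition~\ref{finalrelation}, Lemma~\ref{subtoral}, Lemma~\ref{cover} and Proposition~\ref{SOF} is exactly the paper's approach; the paper's own proof is the one-line citation of these four results, and your first two paragraphs simply unpack that citation correctly.

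Where your proposal has a gap is the converse. You are right to flag ruling out the toral case as the obstacle, but the argument you sketch does not do it. Remark~\ref{e:infinitetoralcover} exhibits \emph{one} toral pair that the machine fails to convert to a finite cover; it does not show that this failure is universal, which is what the converse would require. Your fallback to rank and transverse-measure invariants is also not sound: a finite-index subgroup of $\mathbb{Z}^n$ again has rank $n$, so rank cannot separate a cover from its base, and your phrasing ``for a proper finite-index subgroup'' leaves the degree-one case (the identity map on a toral component, which the relative machine visibly preserves via Case~2 of Process~II since $\widehat{Y}_0=Y'_0$) entirely unexamined. The paper does not elaborate on the converse beyond the blanket citation either, so you are not overlooking a trick recorded elsewhere in the text; but you should not present Remark~\ref{e:infinitetoralcover} or an unsubstantiated invariant comparison as settling the matter.
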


\begin{remark}\label{e:infinitetoralcover}
It is possible that both $Y_0$ and $Y'_0$ are of toral type but the relative Rips machine does not convert $Y_0$ into a finite cover of $Y'_0$ omitting weight $0$ bands. For example, let $Y_0$ be a toral component that is dual to the action of $\Z^{3}=\langle a\rangle \times \langle b\rangle \times \langle c\rangle $ on a real line where $a\to 1, b\to e, c\to \pi$. Its infinite index subgroup $\langle a\rangle \times \langle b\rangle \simeq \Z^2$ also acts on the real line and is dual to a toral component. 
\end{remark}

%\begin{thm}\label{output}
   % Let $H<G$ be two finitely presented groups and $T_G$ be a $G$-tree, $T_H\subset T_G$ be a minimal $H$-tree. Further let $(U,V)$ be a pair of band complexes resolving $(T_G,T_H)$ and $(X,Y)$ are their underlying unions of bands.  For every pair of components $(X_0, Y_0)\subset (X,Y)$ where $X_0$ is either a surface or thin type, let $(X_0, Y_0), (X_1, Y_1), \dots$ be a sequence formed by the relative Rips machine. Then either for $n$ sufficiently large $\i_n:\overline{Y}_n\to \overline{X}_n$ is a finite covering map and so $X_n$ and $Y_n$ are the same type of minimal component or $Y_0$ is simplicial. In particular, the relative Rips machine will alter both $U$ and $V$ into the standard form.
%\end{thm}

%Proof.  By proposition \ref{finalrelation}, there exists an integer $N>0$ such that $\i_n: Y_n\to X_n$ is an almost partial covering map and $Y_n$ contains no pre-weight $0$ or $\frac{1}{2}$ bands for any $n>N$. Then applying lemma \ref{SOF} to $(X_n, Y_n)$, we are done. $\Box$

\begin{cor}\label{output2}
Let $H<G$ be two finitely presented groups. Further let $T_G$ be a $G$-tree with trivial edge stabilizers and $T_H\subset T_G$ be a minimal $H$-subtree. Suppose that $(X\overset{\i}{\to}X')$ is a pair of band complexes, that $X$ and $X'$ resolve $T_H$ and $ T_G$ correspondingly, that $Y$ and $Y'$ are single minimal components of either surface or thin type and  that $\pi_1(Y)$ generates $H$, $\pi_1(Y')$ generates $G$. Then $[G:H]$ is finite. 
\end{cor}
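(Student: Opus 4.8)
The plan is to deduce Corollary~\ref{output2} from Theorem~\ref{t:main} together with a translation-length argument on the trees $T_H$ and $T_G$. First I would invoke Theorem~\ref{t:main}: since $Y'=Y_G$ is a single minimal component of surface or thin type and $Y=Y_H$ is (by hypothesis) also such a component, the ``moreover'' clause applies, so after running the relative Rips machine on the pair $(X\overset{\i}{\to}X')$ we may assume we are in a situation where $\overline{Y}_H^{*}$ is a finite cover of $\overline{Y}_G^{*}$, say of degree $d$. By (A2)--(A5) and the discussion preceding the theorem, passing through the machine does not change the pair of trees resolved, nor the isomorphism type of the fundamental groups; in particular $\pi_1(Y_H^{*})$ still generates $H$ and $\pi_1(Y_G^{*})$ still generates $G$ inside $\mathrm{Isom}(T_G)$, and the morphism $\i^{*}$ still induces the inclusion $H\hookrightarrow G$.

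Next I would extract the group-theoretic consequence of the covering $\overline{Y}_H^{*}\to\overline{Y}_G^{*}$. A finite cover of band complexes induces a finite-index inclusion on fundamental groups up to the weight-$0$ correction already discussed in Lemma~\ref{completecover}: $\pi_1(\overline{Y}_H^{*})$ is a finite-index subgroup of $\pi_1(\overline{Y}_G^{*})$ (or a free factor of one, which for our purposes still has the property we need). The point where the hypothesis that $T_G$ has trivial edge stabilizers enters is in comparing $\pi_1(Y)$ with $\pi_1(\overline{Y})$: omitting weight $0$ bands corresponds, on the level of the $GD$ $\Delta(X')$, to collapsing the annular (edge) pieces, and when edge stabilizers are trivial this changes the group only by a free product with a free group, so $H$ (resp.\ $G$) is obtained from $\pi_1(\overline{Y}_H^{*})$ (resp.\ $\pi_1(\overline{Y}_G^{*})$) by the same kind of modification. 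Chasing these identifications, the index $[\pi_1(\overline{Y}_G^{*}):\pi_1(\overline{Y}_H^{*})]=d<\infty$ translates into $[G:H]<\infty$.

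More directly, I would argue as follows to keep control of the index. The inclusion $H\hookrightarrow G$ is realized by the morphism $\i^{*}:X_H^{*}\to X_G^{*}$, and on the underlying unions of bands this is an immersion that, after the machine, is a finite-degree covering of $\overline{Y}_G^{*}$ by $\overline{Y}_H^{*}$. A finite-sheeted covering of band complexes $\rho:\overline{Y}_H^{*}\to\overline{Y}_G^{*}$ has a well-defined degree $d$, and since $T_G$ has trivial edge stabilizers the graph-of-groups $\Delta(X_G^{*})$ reconstructing $G$ has the vertex group of the foliated (surface or thin) vertex equal to a surface-or-free group together with trivial edge groups, so $G$ is a free product of $\pi_1(\overline{Y}_G^{*})$ with a free group, and similarly for $H$ with $\pi_1(\overline{Y}_H^{*})$, the free factors matching up under $\rho$. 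Then the covering gives $[G:H]=d$, which is finite. I would close by remarking that this recovers, for arbitrary finitely presented $H<G$, the surface/thin case of Reynolds' theorem in \cite{rey}, as claimed after the corollary statement.

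The main obstacle I expect is the bookkeeping around weight-$0$ bands and the passage between $\pi_1(Y)$ and $\pi_1(\overline{Y})$: one has to verify carefully that the triviality of edge stabilizers of $T_G$ forces the discarded weight-$0$ pieces to contribute only free factors (rather than, say, HNN-type identifications that could make the index infinite even when $\overline{Y}_H^{*}\to\overline{Y}_G^{*}$ is a finite cover), and that the free-factor ambiguity allowed by Lemma~\ref{completecover} does not actually occur here because $Y$ itself is already a single minimal component with $\pi_1(Y)=H$. Handling the ``free factor of a finite index subgroup'' alternative in Lemma~\ref{completecover} — ruling it out, or showing it still yields finite index in our setting — is the delicate step; everything else is a direct application of Theorem~\ref{t:main} and standard covering-space/graph-of-groups facts.
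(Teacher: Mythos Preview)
Your first step---invoking Theorem~\ref{t:main} to reduce to the situation where $\overline{Y}^{*}$ finitely covers $\overline{Y}'^{*}$---matches the paper exactly. The divergence, and the gap, is in how you pass from that covering to $[G:H]<\infty$.

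You try to decompose $G$ and $H$ as free products $\pi_1(\overline{Y}'^{*})*F_G$ and $\pi_1(\overline{Y}^{*})*F_H$ and then transfer the index through the covering. But even granting such decompositions, $[\pi_1(\overline{Y}'^{*}):\pi_1(\overline{Y}^{*})]=d$ does not by itself give $[G:H]<\infty$: you would need to know exactly how $F_H$ sits inside $G$, and ``the free factors matching up under $\rho$'' is not something you have established. You flag this yourself as the main obstacle, and indeed your outline does not close it.

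The paper avoids this bookkeeping entirely with two cleaner moves. First, the hypothesis of trivial edge stabilizers in $T_G$ is used to \emph{remove} weight~$0$ bands altogether (\cite[Section~8]{bfstableactions}), so that $Y^{*}$ itself is a finite cover of $Y'^{*}$---no $\overline{Y}$ versus $Y$ discrepancy to manage. Second, the hypothesis that $\pi_1(Y)$ generates $H$ and $\pi_1(Y')$ generates $G$ means $X^{*}$ and $X'^{*}$ are obtained from $Y^{*}$ and $Y'^{*}$ just by attaching $2$-cells; one then checks directly that the $2$-cells also form a cover (a $2$-cell in $X^{*}$ maps to one in $X'^{*}$ since $H<G$, and conversely each $2$-cell in $X'^{*}$ lifts to finitely many in $X^{*}$ since loops in $Y'^{*}$ have finitely many preimages in the finite cover $Y^{*}$). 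Hence $X^{*}\to X'^{*}$ is itself a finite cover, and $[G:H]=[\pi_1(X'^{*}):\pi_1(X^{*})]$ is finite. This sidesteps Lemma~\ref{completecover} and its free-factor alternative entirely.
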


%\begin{cor}
   %Let $H<G$ be two finitely presented groups, $T_G$ be a $G$-tree and $T_H$ be a minimal $H$-tree. Suppose that both $T_G$ and $T_H$ contain only one component. Further assume that $T_G$ has trivial edge stabilizers and admits a surface or thin resolution. Then $[G:H]$ is finite if and only if $T_H$ can be resolved by a minimal type of band complex. In particular we may construct a pair $(U,V)$ resolving $(T_G,T_H)$ such that $V$ is a covering space of $U$, and so $V$ and $U$ are of the same type. 
%\end{cor}

\begin{proof}
Apply the relative Rips machine to $(X\overset{\i}{\to}X')$. Since $Y$ is a surface or thin component, according to Theorem \ref{t:main}, the machine will eventually convert $(X\overset{\i}{\to}X')$ into a new pair $(X^*\overset{\i}{\to}{X'}^*)$ such that $\overline{Y}^*$ is a finite cover of $\overline{Y}'^*$. Since $T_G$ has trivial edge stabilizers, we may assume that $Y^*$ and ${Y'}^*$ contain no weight $0$ bands (\cite[section 8]{bfstableactions}). Thus $Y^*$ is a finite cover of ${Y'}^*$.
       
       %Further attaching proper $2$-cells to $Y^*$ and $X^*$, we may recover two band complexes $V^*$ and $U^*$ resolving $T_H$ and $T_G$. 
 Claim that $X^*$ is also a finite cover of ${X'}^*$. We may assume that $X^*$ and ${X'}^*$ are obtained from $Y^*$ and ${Y'}^*$ by attaching $2$-cells due to the fact that $\pi_1(Y)$ generates $H$ and $\pi_1(Y')$ generates $G$. 
 Because $H$ is a subgroup of $G$. If a $2$-cell is attached along a loop $l$ in $Y^*$, there must be a corresponding $2$-cell is attached along the image of $l$ in ${Y'}^*$. %Otherwise $l$ is contained in the torsion of $G$. 
 Moreover every loop in ${Y'}^*$ has finitely many preimages in $Y^*$. If there is a $2$-cell attached along some loop in ${Y'}^*$, there must be finitely many $2$-cells attached along all its preimages in $Y^*$. Therefore $X^*$ is also a finite cover of ${X'}^*$. $G=\pi_1({X'}^*)$, $H=\pi_1(X^*)$ and so $[G:H]$ is finite. 
\end{proof}

In \cite{rey}, Reynold proves that for a given $\F_n$-tree $T$ with \textit{very small} action, suppose that $T$ is \textit{indecomposable} and  $H<\F_n$ is finitely generated, the action $H$ on its minimal invariant subtree $T_H$ has dense orbits if and only if $H$ has finite index in $\F_n$. Notice that a band complex consisting a single surface or thin component is dual to a indecomposable (geometric) tree. Thus Corollary~\ref{output2} is a weaker version of Reynolds's result for all finitely presented groups.

\subsection{General case}\label{generalcase}
   In previous sections, for a given pair of band complexes $(X \overset{\i}{\to}X')$, we always assume $(A5)$: $\i$ is injective between the sets of components in $Y$ and in $Y'$, to simplify our description. In fact, the assumption $(A5)$ is not necessary for the relative Rips machine. Suppose that there are several components $Y_0^1, \dots, Y_0^k\subset Y$ map to the same component $Y'_0\subset Y'$. The relative Rips machine described in section \ref{process} for a pair can also be applied to a $k+1$-tuple $(Y_0^1,\dots, Y_0^k, Y'_0)$. Roughly speaking, in each step, we may apply proper moves to $Y'_0$ first, then modify each $Y_0^i$ separately. 
   
   In more detail, if $Y'_0$ contains some free subarc, Process I will be applied. First in $Y'_0$, collapse from a free subarc $J_{Y'}$ to obtain $Y'_1$. Similarly as in Section~\ref{processI}, let $J_1^j, \dots, J_{n_j}^j$ be pre-images of $J_{Y'}$ in $Y_0^j$. Then collapse from $J_1^j, \dots, J_{n_j}^j$ in $Y_0^j$ for $j=1, \dots, k$ to obtain $(Y_1^1,\dots, Y_1^k, Y'_1)$. 
   
   If $Y'_0$ contains no free subarc, Process II will be applied. Considering each $(Y_0^j\overset{\i_0^j}{\to}Y'_0)_j$ as a pair, $Y_1^j$ is obtained by modifying $Y_0^j$ as described in Section~ \ref{processII}. A sequence $(Y_0^1,\dots, Y_0^k, Y'_0),(Y_1^1,\dots, Y_1^k, Y'_1),\dots $ produced by Process I and Process II will eventually \textit {stabilize} as proved in Proposition ~\ref{mimicgraph}. Now for a stabilized $k+1$-tuple  $( Y_i^1,\dots, Y_i^k, Y'_i)$, if none of $Y_i^j$ contains pre-weight $0$ bands or pre-weight $\frac{1}{2}$ bands, continue with Process I. Otherwise, apply Process III to $(Y_i^1,\dots, Y_i^k, Y'_i)$. $(Y_{i+1}^1,\dots, Y_{i+1}^k, Y'_{i+1})$ can be obtained as in Section ~\ref{processIII} by considering each $(Y_i^j\overset{\i_i^j}{\to}Y'_i)_j$ as a pair. Then by Proposition ~\ref{finalrelation}, Process III appears only finitely many times. 
   
  Following exactly the same arguments, all results proved in Section~ \ref{special} hold for pairs of band complexes without assuming $(A5)$.
      
 %Let $(Y_0^1,\dots, Y_0^k, Y'_0),(Y_1^1,\dots, Y_1^k, Y'_1),\dots$ be a sequence formed by the relative Rips machine described above.  
 
 %Theorem \ref{output} , there exists an integer $N$ such that for any $n>N$ the follows are true. In $(X_n, Y_n^1,\dots, Y_n^k)$, either $Y_n^i$ is simplicial (so is $Y_0^i$) or $\overline{Y}_n^i$ is a finite cover of $\overline{X}_n^i$ ($Y_0^i$ is of the same type as $X_0$). 

\section{A partial order on band complexes}\label{s:app}   
We define a partial order on band complexes in this section. 

\begin{definition}\label{d:orderofbc}

A morphism $\i: X\to X'$ induces a cellular map $\phi: \Delta(X)\to \Delta(X')$ between their $GD$'s. We say that a vertex of $\Delta(X)$ \textbf{changes type under} the map $\phi$, if the vertex and its image have different types. For example, a simplicial component maps into a minimal component.

For a vertex $v$ of $\Delta(X)$, let its stabilizer $\pi_1(v)$ be the conjugacy class of the subgroup of $\pi_1(X)$ generated by the union of bands contained in $v$. We say $v$ \textbf{gets bigger} if the stabilizer of the image of $v$ strictly contains the image of the stabilizer of $v$, i.e. $\phi(\pi_1(v))\subset \pi_1(\phi(v))$. For example, a surface component maps to another surface component as a 2-sheeted cover.
\end{definition}

\begin{definition}\label{complexity}
   For band complexes $X$ and $X'$, we write $X<X'$ if there is a morphism $\i: X\to X'$ such that: 
   \begin{enumerate}
              \item $\Delta(X)$ has more vertices than $\Delta(X')$; or
             \item $\Delta(X)$ and $\Delta(X')$ have the same number of vertices and under the morphism either a vertex changes type or a surface or thin vertex gets bigger.
                  
   \end{enumerate}
\end{definition}

\begin{prop}\label{FSC}
   Every sequence of band complexes, $X_1<X_2<\dots$, eventually stabilizes. 
\end{prop}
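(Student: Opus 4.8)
The plan is to argue by a monovariant that strictly decreases along the sequence, so it cannot decrease forever. The first coordinate of the monovariant is the number of vertices of $\Delta(X_n)$. By condition (1) of Definition~\ref{complexity}, each step $X_n < X_{n+1}$ either strictly increases this count going from $X_{n+1}$ to $X_n$ (equivalently, $\Delta(X_n)$ has strictly more vertices than $\Delta(X_{n+1})$) or keeps it equal. Since the number of vertices is a nonnegative integer and is nonincreasing as $n$ grows, it must eventually stabilize; so after discarding an initial segment we may assume every $\Delta(X_n)$ has the same number of vertices $V$, and every step is of type (2).

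Under a type-(2) step, the morphism $\i\colon X_n \to X_{n+1}$ is a bijection on vertices of the $GD$'s (same finite count $V$, and the induced cellular map $\phi\colon \Delta(X_n)\to\Delta(X_{n+1})$ is surjective on vertices by the standing assumption in Section~\ref{s:app} that every component of the target has nonempty preimage), and either some vertex changes type or some surface/thin vertex gets bigger. For the second coordinate of the monovariant I would count the number of foliated vertices of $\Delta(X_n)$ that are \emph{not} of simplicial type, i.e. the number of minimal (surface, toral, thin) foliated vertices. By Theorem~\ref{t:main}, a minimal vertex of the target has a preimage that is either minimal of the same type or simplicial; conversely a simplicial vertex can only map to a vertex that is simplicial or minimal (Lemma~\ref{simplcialimage} rules out a minimal vertex mapping into a simplicial one, but a simplicial vertex of $X_n$ may well map into a minimal vertex of $X_{n+1}$). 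Hence along a type-(2) step, under the vertex-bijection $\phi$, simplicial vertices can only be created (never destroyed) in passing from $X_n$ to $X_{n+1}$; equivalently the number of minimal foliated vertices is nonincreasing in $n$. If at some step a vertex genuinely changes type, then — since a type change under $\phi$ between corresponding vertices must, by Theorem~\ref{t:main} and Lemma~\ref{subtoral}, be a minimal-to-simplicial change (a minimal vertex of $X_n$ mapping to a simplicial vertex of $X_{n+1}$ is impossible, so it is the reverse) — the count of minimal foliated vertices strictly drops. Thus after finitely many further steps this second count also stabilizes, and from then on no vertex changes type: every step is of type (2) realized purely by "a surface or thin vertex gets bigger".

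For the final coordinate, I would use the characterization of "gets bigger" from Definition~\ref{d:orderofbc} together with the sharp form of Theorem~\ref{t:main}: a surface or thin foliated vertex $v$ of $\Delta(X_n)$ gets strictly bigger under $\phi$ precisely when, running the relative Rips machine on the pair $(\overline{Y}_v \to \overline{Y}_{\phi(v)})$, the component $\overline{Y}_v$ is converted into a \emph{proper} (nontrivial, degree $>1$) finite cover of its image — this is exactly the third bullet in the definition of $<$ in the introduction. Assign to each such vertex $v$ the covering degree $d(v)\in\mathbb{Z}_{\geq 1}$ of $\overline{Y}_v$ over $\overline{Y}_{\phi(v)}$ in the stabilized pair, and take the monovariant to be $\prod_{v\ \mathrm{surf/thin}} d(v)$ (product over the, now fixed in number, surface and thin foliated vertices, matched by the vertex-bijection $\phi$). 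A step of this final kind strictly increases at least one $d(v)$ (since $\phi|_{\pi_1(v)}$ is a proper inclusion of finite-index subgroups, the index multiplies by a factor $>1$), hence strictly increases the product, when read from $X_n$ toward $X_{n+1}$; read from $X_{n+1}$ back toward $X_n$ the product strictly decreases. This product is a positive integer, so it cannot decrease indefinitely; therefore only finitely many such steps occur, and the sequence stabilizes.

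The main obstacle I anticipate is not the combinatorial packaging above but verifying cleanly that the three coordinates behave monotonically \emph{simultaneously} — in particular, that a "gets bigger" step does not secretly decrease the vertex count or un-create a simplicial vertex, and that the covering degrees $d(v)$ are well defined and multiplicative under composition of the machine-moves. This rests on the compatibility statements already established: that moves of the relative Rips machine induce isomorphisms on the relevant $GD$-data (the bullet list after Proposition~\ref{treehasresolution}, extended to pairs in the Remark after Definition~\ref{pairoftrees}), and that "$\overline{Y_0^*}$ is a finite cover of $\overline{Y_0^{'*}}$" in Theorem~\ref{t:main} is an invariant of the pair once the machine has stabilized. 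Granting those, the lexicographic monovariant (number of $GD$-vertices, number of minimal foliated vertices, product of surface/thin covering degrees) is strictly decreasing along $X_1 < X_2 < \cdots$ and takes values in a well-ordered set, so the sequence must terminate.
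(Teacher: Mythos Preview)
Your reduction through the first two coordinates is essentially fine (though in the second step you have the direction reversed: a simplicial vertex of $X_n$ mapping to a minimal vertex of $X_{n+1}$ means the count of minimal foliated vertices \emph{increases} with $n$; this is harmless for termination since the count is still bounded, now above by $V$). The genuine gap is in your third coordinate.

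The quantity $\prod_v d(v)$ you propose is a function of the \emph{step} $X_n\to X_{n+1}$, not of $X_n$ alone: each $d(v)$ is the index of $\phi(\pi_1(v))$ in $\pi_1(\phi(v))$, and there is no a~priori relation between the product attached to the step $n\to n+1$ and the product attached to the step $n+1\to n+2$. The sentence ``read from $X_{n+1}$ back toward $X_n$ the product strictly decreases'' does not define a quantity on the sequence $\{X_n\}$ that is well-ordered and monotone, so nothing forces termination. (Tracking the cumulative degree from $X_1$ to $X_n$ does not help either: that quantity increases without any evident upper bound.)

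What is missing is an \emph{intrinsic} invariant of each surface or thin component that behaves multiplicatively under finite covers. This is exactly what the paper uses: for a surface component one takes $|\chi(\overline{Y}_v)|$, and for a thin component one takes $|\chi|$ of the finite graph whose vertices are the islands and whose edges are the long bands (Section~\ref{s:thinstructure}). If $\overline{Y}_v$ is a degree-$d$ cover of $\overline{Y}_{\phi(v)}$ with $d>1$, then $|\chi(\overline{Y}_v)| = d\cdot|\chi(\overline{Y}_{\phi(v)})|$, so $|\chi|$ strictly decreases along the sequence. Since it is a positive integer, the chain of proper covers has length bounded by $|\chi|$ of the first term. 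Replacing your product of degrees by the sum (or product) of these Euler characteristics over the surface/thin vertices gives a genuine monovariant, and your lexicographic scheme then goes through.
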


\begin{proof}

Let $Y_i$ be the underlying union of bands for $X_i$  and $\i_i: X_i\to X_{i+1}$ be a morphism such that $X_i<X_{i+1}$ holds.  It is enough to consider a special sequence $Y_0<Y_1< Y_2< \dots$ with the property that each $Y_i$ is a single component of surface or thin type. $(Y_i \overset{\i_i}{\to} Y_{i+1})$ is then a pair of components. According to Theorem~ \ref{t:main}, up to omitting weight $0$ bands,  the relative Rips machine eventually converts $Y_i$ into a finite cover of $Y_{i+1}$. In the case of surface components, the length of a sequence of finite covers is bounded by the absolute value of the Euler characteristic of $Y_1$. In the case of thin components, associated to $Y_i$, there is a graph according to Section ~\ref{s:thinstructure}: consider each island as a vertex and each long band as an edge. The length of a sequence is then bounded by the absolute value of the Euler characteristic of the graph associated to $Y_1$. 
\end{proof}

\begin{remark}
Let $(X \overset{\iota} \to X')$ be a pair of band complexes. Assume that the preimage of each component $Y'_{0}$ of $Y'$ is not empty. Then either $X<X'$; Or $\i^{-1}(Y'_{0})$ is a single component, denoted by $Y_{0}$. Moreover, if $Y'_{0}$ is either a surface or thin component, then $(Y_{0} \overset{\iota} \to Y'_{0})$ further has the property that the relative Rips machine eventually converts it into a pair $(Y^*_{0} \overset{\iota^*} \to {Y'_{0}}^*)$ where $\overline{Y_{0}^*}$ is homeomorphic to $\overline{{Y'_{0}}^*}$.

In the case of $X<X'$, we may build a finite collection $\{X^*\}$ of band complexes following the instruction below such that either $X^*=X'$ or $X^*$ is an ``intermediate'' band complex between $X$ and $X'$, i.e. $X<X^*<X'$.

Here is a brief description on how to construct $X^*$'s. Such a $X^*$ is called an \textit{enlargement} of $X_H$. More detail of enlargements will be discussed in \cite{definable2}.

If $\i^{-1}(Y'_{0})$ contains two or more components, $X^*$ is obtained by gluing two of these components together. Let $Y^1_{0}$ and $Y^2_{0}$ be two distinct components that map into $Y'_{0}$ and let $J$ be an arc in the underlying real graph of $Y'_{0}$ with a positive transverse measure. By picking $J$ small enough, we may assume that $J$ has finitely many pre-images $J^1_1\dots J^{1}_{n_1}$ in $Y^1_{0}$ and finitely many pre-images $J^2_1\dots J^2_{n_2}$ in $Y^2_{0}$ with the property that $\i$ maps each $J^i_j$ isometrically onto $J$. We may then build $X^*$ by attaching a band $B=J\times I$ to $Y$ via a measure preserving map $J\times \{0\}\to J^1_m$ and $J\times \{1\}\to J^2_{m'}$ for some $m\in \{1,\dots, n_1\}, m'\in \{1,\dots, n_2\}$ \footnote{We may add one band for each pair of segments in $\{J^1_1\dots J^{n_1}_1, J^1_2\dots J^{n_2}_2\}$ to get a bigger intermediate band complex.} (add extra $2$-cells if necessary).  %and $Y_{H^*}$ be the new component formed by the union of $Y^1_H$, $Y^2_H$ and $B$. 
%It is clear that $X_{H'} >X_H$ since $\Delta (X{_H'})$ has less vertices than $\Delta(X_H)$. 
%Since $X^{\infty}_H=\lim X^i_H$, $X^i_{H'}$ can be built similarly by adding an extra band $B$ for $X^i_H$ for large $i$. Extend $\mathcal{H}$ to a generating set $\mathcal{H}'\subset \mathcal{G}$ for $H'$. For every $\alpha \in \mathcal{H}'$, we may read off interactions between $l_{\alpha}$ and $X^i_{H'}$. let the map obtained be $h'_i:H'\to \F$ which is an extension of $h_i$. 
%Moreover, we may add extra $2$-cells to $X_{H'}$ if necessary to achieve $H'<G$. $X_{H'}$ is then a desire enlargement. 

If $Y'_0$ is a surface or thin component and $\i^{-1}(Y'_0)$ is a single component $Y_0$ which is either simplicial or a proper cover of $Y'_0$ (omitting weight $0$ bands), $X^*$ is obtained from $X$ by ``replacing'' $Y_0$ with $Y'_0$. Let $J$ be an small arc of the underlying real graph contained in $\i(Y_0)$ and $J'$ be one of $J$'s preimages in $Y_0$. Since $Y'_0$ is either a surface or thin component, it has a band complex structure over $J$, i.e. $Y'_0$ can be viewed as a union of finitely many bands glued to $J$. Now we may glue a copy of $Y'_0$ to $Y_0$ by identifying $J$ and $J'$. Then adding extra $2$-cells if necessary, the resulting complex is $X^*$. 
%By construction, for large $i$, $X^i_G$ have similar band complex structure as $X^{\infty}_G$. Thus cutting the component of $X^{i}_G$ containing $J$ open along $J$, we will obtain a band complex structure similarly as in $Y_G$ (may be a little off towards boundaries of bands). By glueing these bands to $X^{i}_{H}$, we may build $X^{i}_{H'}$. Proceed as in the previous case, we will obtain $h'_i$. 
%Further, attach extra $2$-cells to $X_{H'}$ if necessary to obtain $H'<G$. The only thing left to check is that $X_{H'}>X_H$. Indeed, either the vertex corresponding to $Y^H_0$ changes type (simplicial to the same type as $Y^G_0$) or gets bigger (Ignoring weight $0$ bands, $Y^H_0$ is a proper cover of $Y^G_0$ whereas $Y^{H'}_0$ is homeomorphic to $Y^G_0$ after application of the relative Rips machine). 

\end{remark}

\appendix

\section{Thin Type components} \label{thin}

\subsection{Review.}\label{thinreview} \hspace*{\fill} 

Fix $X$ be a band complex with underlying union of bands $Y$. Let $Y_0\subset Y$ be a component. As described in Section~\ref{background}, $Y_0$ is of thin type if eventually only Process I is applied. In this section, we will review Process I and restate some properties proved in \cite{bfstableactions}  for our convenience. 
 
\begin{definition}\label{generalizedband}
Let $B_1, B_2, \dots, B_n$ be a sequence of weight 1 bands in $Y$. We say that they form a \textbf{generalized band} $\BB$ provided that :
   \begin{itemize}
      \item the top of $B_{i}$ is identified with the bottom of $B_{i+1}$ and meets no other positive weight bands for $i=1, 2, \dots, n-1$, and
      \item the sequence of bands is maximal with respect to above property.
   \end{itemize}
$B_i$ and $B_{i+1}$ are called \textit{consecutive bands}. The bottom of $B_1$ and the top of $B_n$ are \textit{bases} of $\BB$. %denoted also by $b$ and $dual(b)$. 
For a generalized band $\BB=b\times I_n$ where $I_n=[0,n]$, a \textit{vertical fiber} is a set of the form $\{point\}\times I_n$. 
A sequence of consecutive bands $B_i, B_{i+1},\dots, B_{i+k}$ is called a \textit{section} of $\BB$, where $i\geq 1, i+k\leq n$. The \textit{length} of $\BB$, denoted by $l(\BB)$, is $n$ (the number of bands it contains). The \textit{width} of $\BB$, denoted by $w(\BB)$, is the transverse measure of its base $b$. Let $c\subset b$ be a sub-interval, $\CC=c\times I_n$ is a \textit{sub-generalized band} of $\BB$. Similarly as for bands, we may talk about weight of a generalized band.
\end{definition}   

\begin{definition}
For a given band, the midpoint of a base divides the base into \textbf{halves}. A weight 1 base is \textbf{isolated} if its interior does not meet any other positive weight base. A half $h$ of a weight $\frac{1}{2}$ base $b$ is \textbf{isolated} if the interior $h$ meets no positive weight base other than $b$ and $\text{dual}(b)$. A weight $1$ base $b$ is \textbf{semi-isolated} if a deleted neighborhood in $b$ of one of its endpoints meets no other positive weight base. A half $h$ of a weight $\frac{1}{2}$ base $b$ is \textbf{semi-isolated} if a deleted neighborhood in $h$ of one of its endpoints meets no positive weight base other than $b$ and $\text{dual}(b)$.
\end{definition}   

\vspace{1pc}
We are now ready to describe Process I of the Rips machine. 
\vspace{1pc} 

\noindent\textit{Process I}. Fix a minimal component $Y_0$ in $X$. We define $X'$ to be the band complex obtained from $X$ by the following operation. Find, if possible, a maximal free subarc $J$ of a base $b$ in $Y_0$. If such a $J$ does not exist, define $X'=X$ and go on to Process~II. Now use $(M5)$ to collapse from $J$. If there are several $J$'s to choose from, abide by the following rules:
   
   \begin{enumerate}
      \item If there is an isolated (half-) base $c$, set $J=c$. This is called an $I_1$-collapse.
      \item If there is no isolated (half-) base, but there is a semi-isolated (half-) base $c$ then choose $J$ so that it contains an endpoint of $c$.  This is called an $I_2$-collapse.
      \item If there are no isolated or semi-isolated (half-bases), we can use any free subarc as J. This is called an $I_3$-collapse.
      \item Generalized bands are treated as units. More precisely, if $J$ is contained in a base of a generalized band, the collapse is preformed all the way along the entire generalized band.
         \end{enumerate}

\vspace{1pc}
\begin{notation}\label{sequencenotation}
Let $X=X^0, X^1,\dots$ be an infinite sequence of band complexes formed by successive applications of Process I, i.e. $X^{i+1}=(X^i)'$. Further let $Y^i$ be the underlying union of bands for $X^i$ and $Y_i\subset Y^i$ be the component corresponding to $Y_0$. Denote the collapsing map from $Y_i$ to $Y_{i+1}$ by $\delta_i$.
Omitting weight $0$ bands, there is a  natural inclusion $\overline{Y}_{i+1}\hookrightarrow \overline{Y}_{i}$, denoted by $\i_i$.

In $Y_i$, let the number of generalized bands that have positive weight be $a_i$. We say a generalized band $\BB_{i+1}\subset \overline{Y}_{i+1}$ is an \textbf{image} of a generalized band $\BB_{i}\subset \overline{Y}_{i}$, if a \textit{section} of $\BB_{i+1}$ is contained in $\BB_{i}$ as a sub-generalized band under $\i_i$. In particular, after collapsing a free subarc in $\BB_{i}$, $\BB_{i}$ has no image if it is an $I_1$-collapse, has exactly one image if it is an $I_2$-collapse and has at most two images if it is an $I_3$-collapse. It is clear that in all cases $l(\BB_{i})\leq l(\BB_{i+1})$ when $\BB_{i+1}$ is an image.  A collapse $\delta_i$ is said to be \textbf{increasing} if the length of some generalized band in $Y_i$ is strictly less than the length of one of its images in $Y_{i+1}$.
\end{notation}

\begin{lemma}\label {increasing}
Follow the above notation. For a given collapse $\delta_i: Y_i\to Y_{i+1}$,  we have:
   \begin{itemize}
      \item If $\delta_i$ is an $I_1$-collapse, then $a_{i+1}=a_i-1$ and $\delta_i$ is not increasing.
      \item If $\delta_i$ is an $I_2$-collapse, either $a_{i+1}=a_i$ or $a_{i+1}=a_i-1$. Moreover, $\delta_i$ is an increasing collapse if and only if $a_{i+1}=a_i-1$.
      \item If $\delta_i$ is an $I_3$-collapse, then $a_{i+1}=a_i+1$ or $a_{i+1}=a_i$ or $a_{i+1}=a_i-1$. Moreover, $\delta_i$ is an increasing collapse if and only if $a_{i+1}\neq a_i+1$. 
   \end{itemize}

\end{lemma}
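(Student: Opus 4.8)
The plan is to analyze each type of collapse separately, tracking how the number of positive-weight generalized bands changes and when a generalized band's length strictly increases. Throughout I will use that generalized bands are treated as units (rule (4) of Process I), so a collapse along a free subarc $J$ contained in a base of a generalized band $\BB_i$ is performed along the entire $\BB_i$, and its effect on the base structure of $Y_i$ is local to $\BB_i$ and the bands meeting the collapsed base. First I would set up the basic dictionary: collapsing from $J\subset b$ with $b$ the base of a generalized band $\BB_i$ of length $n$ replaces $\BB_i$ by at most two new generalized bands (the two ``leftover'' pieces of $b$ outside $J$, pushed up to $\mathrm{dual}(b)$), and each such piece, being a consecutive band that may now be merged with bands above $\mathrm{dual}(b)$, has length $\geq n$; this is exactly Notation~\ref{sequencenotation}. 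So an image has length $\geq l(\BB_i)$, and $\delta_i$ is increasing iff for some generalized band some image is strictly longer, which by maximality of generalized bands happens precisely when a leftover piece of $\mathrm{dual}(b)$ gets absorbed into a longer run of consecutive bands, i.e. when $\mathrm{dual}(b)$ (or the relevant half) becomes non-isolated from exactly one side in a way that forces concatenation.

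Next I would do the case analysis. For an $I_1$-collapse, $J=c$ is an entire isolated (half-)base, so the whole of $c$ collapses: the generalized band $\BB_i$ with base $c$ disappears entirely (it has no image), and no other positive-weight base meets the interior of $c$, so no other generalized band is affected except possibly a merge at $\mathrm{dual}(c)$. Crucially, collapsing all of $c$ does not create a new positive-weight band, and $\mathrm{dual}(c)$'s neighborhood structure is only altered by removing $\BB_i$; one checks directly that the remaining generalized bands keep their lengths (any merge that could occur at $\mathrm{dual}(c)$ would already have been a single generalized band by maximality, since $c$ isolated means $\mathrm{dual}(c)$ meets no new positive weight bases after the collapse that it did not already interact with). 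Hence $a_{i+1}=a_i-1$ and $\delta_i$ is not increasing. For an $I_2$-collapse, $J$ contains exactly one endpoint of a semi-isolated (half-)base $c$, so after the collapse $c$ is replaced by a single shorter base (one leftover piece), meaning $\BB_i$ has exactly one image. The count $a_{i+1}$ either stays equal to $a_i$ (the leftover piece remains a standalone generalized band, no merge) or drops by one (the leftover piece of $\mathrm{dual}(c)$ now lies interior to a run and merges with neighbors, shortening the band count); and the merge is exactly the event that makes the image strictly longer. So $\delta_i$ increasing $\iff a_{i+1}=a_i-1$. For an $I_3$-collapse, $J$ is an interior free subarc of $b$ (no isolated or semi-isolated bases around), so $b$ is split into two leftover pieces, giving $\BB_i$ at most two images; depending on whether each leftover piece merges upward, the net change in the number of positive-weight generalized bands is $+1$ (both pieces survive as new bands, no merging, lengths unchanged), $0$, or $-1$. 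The point is that $a_{i+1}=a_i+1$ is exactly the non-merging case, which is the only case in which no image is strictly longer, so $\delta_i$ increasing $\iff a_{i+1}\neq a_i+1$.

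The main obstacle I expect is making the ``merge $\iff$ increasing'' equivalences genuinely airtight rather than picture-driven: one has to rule out, for instance, an $I_2$-collapse in which the band count stays constant yet some \emph{other} generalized band (not the image of $\BB_i$) unexpectedly lengthens, and conversely that $a_{i+1}=a_i-1$ can only be caused by the image merging and not by some separate coincidence. The way I would close this is to observe that collapsing from a free subarc of a generalized band $\BB_i$ only changes the complex in a neighborhood of $\BB_i$ and of $\mathrm{dual}(b)$ — all bases away from this region, and their adjacency/isolation relations, are untouched — so the only generalized band whose length can change is the image of $\BB_i$, and its length changes precisely when the leftover base at $\mathrm{dual}(b)$ loses an endpoint-isolation it had before (which is what a merge means). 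The bookkeeping of $a_i$ then follows by counting: each leftover piece contributes $+1$ to the would-be count, the disappearance of the original $\BB_i$ (when it fully collapses, $I_1$) or its replacement (when it partially collapses, $I_2$, $I_3$) contributes $-1$, and each upward merge absorbs one band and contributes another $-1$. Combining the two gives all three bulleted statements simultaneously. This is essentially the content already recorded in \cite[Section 6]{bfstableactions}, so I would cite it for the routine verifications and only spell out the increasing/merging correspondence.
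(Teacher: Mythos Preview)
Your approach is essentially the same as the paper's: analyze each collapse type locally, track whether the leftover piece(s) of the collapsed generalized band concatenate with a neighboring generalized band at $\mathrm{dual}(b)$, and identify ``increasing'' with ``a merge occurs.'' The paper's proof is terser---it declares the $I_1$ case clear, spells out the $I_2$ dichotomy with a picture (the remaining piece $\CC'_i$ either equals its image $\CC_{i+1}$ or is properly contained in it, the latter being a concatenation with exactly one other generalized band), and says the $I_3$ case is argued in the same fashion---but your explicit locality remark (only the image of $\BB_i$ can change length, since the collapse alters nothing away from $\BB_i$ and $\mathrm{dual}(b)$) is precisely the content behind the paper's brevity.
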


\begin{proof}
The first item is clear since a whole generalized band vanishes in an $I_1$-collapse. 

For the second item, let $\CC_i$ be a generalized band in $Y_i$. Suppose that one of its bases $c_i$ is a semi-isolated base and $\delta_i$ collapses from a maximal free subarc $J\subset c_i$. In $Y_{i+1}$, let the remaining part of $\CC_i$ be $\CC'_i$ and the image of $\CC_i$ \footnote{$\CC_i$ has exactly one image since $\delta_i$ is an $I_2$-collapse.} be $\CC_{i+1}$ (i.e. the generalized band containing $\CC'_i$), see Figure~\ref{Ft1}. If $\CC'_{i}=\CC_{i+1}$, then $a_{i+1}=a_i$ and $\delta_i$ is not increasing. If $\CC'_i\subsetneq \CC_{i+1}$, then $\CC_{i+1}$ must be the concatenation of $\CC'_{i}$ and another generalized band (can not concatenate with more than one band due to no proper compact subset of each leaf).
In this case, $a_{i+1}=a_i-1$ and $\delta_i$ is increasing. The third item can be argued in the same fashion. 

\end{proof}
 
 \begin{figure}[h!]
   \centering
      \begin{tikzpicture}[thick, scale=0.8]
      \footnotesize
\draw (0,0)--(2,0)--(2,2)--(0,2)--(0,0);
\draw (-0.5,3)--(0,2)--(1.5, 2)--(1,3);
\draw (2,3)--(1.5,2)--(2, 2)--(2.5,3);
\draw (1,-1.5)--(1,0)--(2, 0)--(2,-1.5);
\draw [ultra thick] (0,0)--(1,0);
\node [below] at (0.5,0) {$J$};

\draw [dashed] [red] (2,0)--(2.5,0)--(2.5,-1.5);
%\draw [red] (0,0.1)--(2,0.1);
\node [left] at (0,0) {$c_i$};
%\node [below] at (0.8, -0.5) {$\BB_Y^1$};
\node [above] at (1, 0.7) {$\CC_i$};
\node [left] at (0, 2) {$dual(c_i)$};

\node [above] at (1.5, -1) {$\mathbf{A}_i$};
\node [below] at (1, -2) {$Y_i$};

\draw [->] (4,0)--(6,0);
\node [above]  at (5,0) {Collapse $J$};

\draw (9,-1.5)--(9,2)--(8,2)--(8,-1.5);
\draw (6.5,3)--(7,2)--(8.5, 2)--(8,3);
\draw (9,3)--(8.5,2)--(9, 2)--(9.5,3);
\draw [dashed] [red] (9,0)--(9.5,0)--(9.5,-1.5);
\draw [dashed] (8,0)--(9,0);
\node [above] at (8.5, 0.7) {$\CC'_i$};
\node [left] at (8, -.03) {$\CC_{i+1}$};
\node [above] at (8.5, -1) {$\mathbf{A}'_i$};
\node [below] at (8, -2) {$Y_{i+1}$};
\end{tikzpicture}
\caption{\footnotesize Without the red dashed lines, the picture shows the case that $\CC'_i\subsetneq \CC_{i+1}$ and $a_{i+1}=a_i-1$. In particular, $\CC'_i$ and $\mathbf{A}'_i$ form a longer generalized band $\CC_{i+1}$ in $Y_{i+1}$. If $\mathbf{A}_i$ has the red dashes as its boundary, then $\CC'_i=\CC_{i+1}$ and $a_{i+1}=a_i$.} \label{Ft1}
\end{figure}
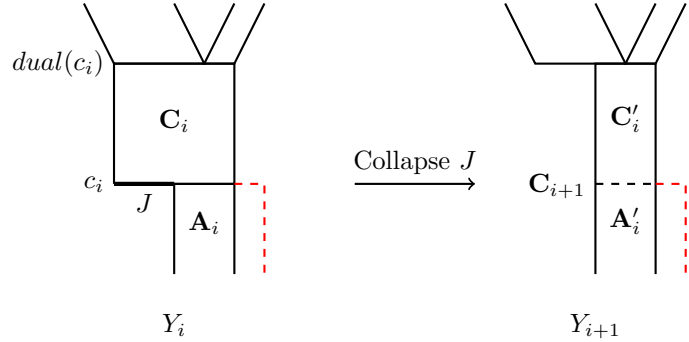

   \begin{prop}\label{thinprop} 
   Let $X$ be a band complex with underlying union of bands $Y$ and $Y_0\subset Y$ be a minimal component of thin type.
   Suppose that $Y_0, Y_1,\dots$ is an infinite sequence of components formed by application of Process I. Then the following holds.
      
      \begin{enumerate}
         \item $\textit{Complexity}(Y_{i+1})\leq \textit{Complexity}(Y_i)$ for $i=0, 1, 2\dots$. In particular, eventually the complexity is a fixed number $\mathcal{C}$.

         \item All bands in $\overline{Y}_i$ can be organized into at most $N_B=6\mathcal{C}+1$ generalized bands for all sufficiently large $i$.
         
         \item Let $L$ be the induced lamination on $\overline{Y}$ and the limiting lamination be $L_{\infty}=L\cap \bigcap_{i}^{\infty} \overline{Y}_i$. Then each subband of a band in $\overline{Y}$ either fully collapses within finitely many steps or meets $L_{\infty}$ in infinitely many vertical fibers.
         
         %\item Let $L$ be the induced lamination on $Y$ and $L_{\infty}=L\cap \bigcap_{i}^{\infty} Y_i$. Then every leaf of $L$ disjoint from $L_{\infty}$ is a $1$-ended tree. And every compact subset of each $1$-ended tree eventually collapses. 
         
      \end{enumerate}

   \end{prop}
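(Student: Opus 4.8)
The three assertions are proved in order; (1) and (2) are the Process~I specializations of results in \cite{bfstableactions}, which I would reprove by unwinding the relevant moves, while (3) carries the genuinely new bookkeeping. \emph{For part (1):} Process~I applies only the move (M5), and the preliminary (M2)/(M4) steps inside (M5) act solely on weight~$0$ bands, which contribute $0$ to every block weight-sum and leave the block decomposition of the positive-weight bands unchanged; so it suffices to check that the core collapse of $J\times I$, for $J$ a free subarc of a weight~$1$ base $b$, does not raise the complexity of Definition~\ref{weight}. I would run the case analysis of \cite[section 7]{bfstableactions}: when $J$ is interior to $b$, the band $B$ is replaced by at most two weight~$1$ bands whose bases cover $b\setminus\operatorname{int}(J)$, and the identification of $J$ with $\operatorname{dual}(J)$ forced by the collapse does not merge two distinct blocks; when $J$ contains an endpoint of $b$, the band $B$ is replaced by at most one weight~$1$ band, and the weight~$1$ lost by this suppression absorbs the only possible block merging, that of the block of $b$ with the block of $\operatorname{dual}(b)$. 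Either way the sum of the block complexities does not increase, which is the Process~I content of \cite[Proposition 7.5]{bfstableactions}. Since $\operatorname{Complexity}(Y_i)$ is then non-increasing and takes values in the well-ordered set $\tfrac12\Z_{\ge 0}$, it is eventually a fixed number $\mathcal C$.

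\emph{For part (2):} fix $i$ large enough that $\operatorname{Complexity}(Y_i)=\mathcal C$ and $Y_i$ is in the standard form of a thin component, and bound the number $a_i$ of positive-weight generalized bands of $\overline{Y}_i$. Assigning to each generalized band its two bases produces $2a_i$ base endpoints, and by the maximality clause of Definition~\ref{generalizedband} every such endpoint is either an endpoint of an edge of the real graph of $Y_i$ or a point where the weight sum exceeds~$1$, hence a point contributing to the complexity of its block. I would count these points against the formula of Definition~\ref{weight}; equivalently, this is the index bookkeeping of \cite[section 8]{bfstableactions}, where one shows that the total index $\sum_{q\in\Omega_{Y_i}}\max\{i_{Y_i}(q),0\}$ of the limit set is at most $6\mathcal C$ and that $a_i\le 1+\sum_{q}\max\{i_{Y_i}(q),0\}$, yielding $a_i\le 6\mathcal C+1$. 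I expect this counting to be the fussiest point of the write-up.

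\emph{For part (3):} set $\Gamma_\infty:=\Gamma\cap\bigcap_i\overline{Y}_i$, so that $\Gamma\setminus\Gamma_\infty$ is the increasing union of the relatively open regions of $\Gamma$ collapsed at the finite stages, and let $S$ be a subband of a band $B\subset\overline{Y}$ with base $c\subseteq b$. If $c\cap\Gamma_\infty=\emptyset$, then $c$ is compact and covered by the increasing open family of finite-stage collapsed regions, so $c$ lies inside the region collapsed by some stage $N$, and $S$ fully collapses within $N$ steps. If $c\cap\Gamma_\infty\neq\emptyset$, I claim that $c\cap\Gamma_\infty$ is infinite; granting this, $S$ is not fully collapsed at any finite stage and its infinitely many vertical fibers over the points of $c\cap\Gamma_\infty$ interior to $c$ survive in $\overline{Y}_\infty$, hence meet $L_\infty$, which is the conclusion sought. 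To prove the claim I would invoke the standard-form structure of thin components in \cite[section 8]{bfstableactions}: together with part~(2) it shows that the collapse pattern on $\overline{Y}_i$ is eventually periodic, whence $\Gamma_\infty$ is locally a Cantor set and $c\cap\Gamma_\infty$ has no isolated points, so it is empty or infinite. The main obstacle is precisely this last step — making ``eventually periodic'' precise and verifying that $c\cap\Gamma_\infty$ inherits perfectness — which I would settle by appeal to \cite[section 8]{bfstableactions} rather than reconstructing the entire analysis.
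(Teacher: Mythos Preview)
Your argument for (1) is essentially the paper's, and fine. The differences are in (2) and (3).

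For (2), the paper does not bound $a_i$ via an index sum. Instead it uses Lemma~\ref{increasing}: $a_{i+1}>a_i$ can only occur when $\delta_i$ is an $I_3$-collapse, and at such a moment $Y_i$ has no isolated or semi-isolated (half-)bases. This constraint forces every complexity-$0$ block to consist of two coinciding weight-$1$ bases, or a pair of weight-$\tfrac12$ bases coinciding with one weight-$1$ base; from this one argues that each generalized band has a base in a positive-complexity block, or is weight~$\tfrac12$ paired with a weight-$1$ base in a complexity-$0$ block. Since there are at most $\mathcal C$ positive-complexity blocks, one gets $a_i\le 6\mathcal C$, hence $a_{i+1}\le 6\mathcal C+1$. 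Your proposed inequality $a_i\le 1+\sum_q\max\{i_{Y_i}(q),0\}$ is not obviously true for an arbitrary $Y_i$ in the sequence and you give no argument for it; the trick you are missing is to bound $a_i$ only at the special moments where it is about to increase.

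For (3) there is a genuine gap. The collapse pattern for a thin component is \emph{not} eventually periodic --- that is the hallmark of surface type, not thin --- so your appeal to periodicity to deduce that $\Gamma_\infty$ is perfect does not go through. Worse, perfectness of $\Gamma_\infty$ is essentially equivalent to the statement you are trying to prove, so invoking it from \cite{bfstableactions} section~8 is circular unless you can point to a specific result there that establishes it independently. The paper's argument is direct and avoids this: by \cite[Proposition~7.2]{bfstableactions}, $I_3$-collapses occur infinitely often, and each $I_3$-collapse that hits a surviving subband of $B$ strictly increases the number of connected components of $B\cap\overline{Y}_i$. Hence that number tends to infinity, and since each component (being a nested intersection of nonempty compacta) meets $L_\infty$, the subband meets $L_\infty$ in infinitely many vertical fibers. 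The key input you are missing is the infinite recurrence of $I_3$-collapses, which is what actually drives the Cantor-like structure rather than the other way around.
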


\begin{proof}  
(1) This follows from the definition of complexity.
Suppose that $\delta_i: Y_{i}\to Y_{i+1}$ is an $I_3$-collapse. Let the collapsed free subarc be $J_i$, the block containing $J_i$ be $\b_i$ and the block containing $\text{dual}(J_i)$ be $\b^*_i$. 
$\b_i$ is necessarily split into two blocks $\b_{i+1}^1,\b_{i+1}^2 $ contained $ \Gamma_{i+1}$. Denote the remaining of $\b^*_i$ in $\Gamma_{i+1}$ by $\b^*_{i+1}$, which may be a single block or a union of two blocks. Then number of bases contained in the union $\b_{i+1}^1\cup \b_{i+1}^2\cup \b^*_{i+1} $ is at most two more than the number of bases contained in $\b_i\cup \b^*_{i}$. Therefore: $$\text{Complexity}(\b_{i+1}^1)+\text{Complexity}(\b_{i+1}^2)+\text{Complexity}(\b^*_{i+1})$$
$$\leq \text{Complexity}(\b_i)+\text{Complexity}(\b_{i}^*)$$
Thus $\text{Complexity}(Y_{i+1})\leq \text{Complexity}(Y_i)$. Similar analysis can be done for both $I_1$- and $I_2$-collapse.

          (2) According to the first item, without loss, we may assume that $Y_i$ has the fixed complexity $\mathcal{C}$. Therefore, in $Y_i$, the number of blocks that have positive complexity is bounded by $\mathcal{C}$.
         Recall that the number of generalized bands in $\overline{Y}_i$ is denoted by $a_i$. Suppose that $a_{i+1}>a_i$. This occurs only if $\delta_i: Y_i\to Y_{i+1}$ is an $I_3$-collapse (Lemma~\ref{increasing}). Thus $Y_i$ contains no isolated or semi-isolated bases. Therefore, each block of complexity 0 consists of two coinciding bases of weight $1$, or of a pair of weight $\frac{1}{2}$ bases that coincides with a base of weight 1. So for a given generalized band $\BB$,  either (at least) one of its bases is contained in a block of positive complexity or $\BB$ has weight $\frac{1}{2}$ and its bases form a block of complexity 0 along with another weight 1 base. It follows that $a_i$ is bounded above by $6\mathcal{C}$, and so $a_{i+1}\leq 6\mathcal{C}+1$. %If $Y_{i+1}\to Y_{i+2}$ is an $I_1$ or $I_2$-collapse, then $a_{i+2}\leq a_{i+1}\leq 6C+1$. If it is an $I_3$-collapse, then the same analysis shows that $a_{i+1}\leq 6C, a_{i+2}\leq 6C+1$. Therefore, the number of generalized bands in $Y_i$ is bounded by $N=6C+1$. 

          (3) Assume that $B\subset \overline{Y}$ is a subband that does not fully collapse within finitely many steps. According to \cite[Proposition 7.2]{bfstableactions}, $I_3$-collapses occur infinitely often along the application of the Rips machine. So the number of connect components of the intersection between $B$ and $\overline{Y}_i$ goes to infinity with $i$. Each of these components then have a nonempty intersection with $L_{\infty}$ and we are done. 
          \end{proof}
          
         %In particular the intersection between bases of $B$ and $L_{\infty}$ is a perfect set. 
          %(3) Recall the following results from section 7 of \cite{bfstableactions},
          
          %(i). $I_1$-collapse occurs only finitely many times since it leads to reduction of complexity.
          
          %(ii). $I_3$-collapse occurs infinitely many times. Since if only $I_2$-collapses occurs, it leads to reduction of number of generalized bands.
          
          %Now we will start to construct a sequence satisfies the requirement.
          %In $Y'_0=Y$, there are finitely many free subarcs, call them $J^{0}_{1}, \dots, J^{0}_{k_0}$. From $Y'_0$ to $Y'_1$, collapse one of these free arcs. The rest either remains the same or possibly maps into a free subarcs with greater width, still call them by their old names. In $Y'_1$, there are also finitely many free subarcs, some of them are carried from $Y'_0$ and some of them are new (created by the collapse). Name these new ones by $J^{1}_{1},\dots, J^{1}_{k_1}$. Inductively, we may name all free subarcs appear at any stage. Suppose now we need to produce $Y'_i$ from $Y'_{i-1}$, if there are several free subarcs to choose from, besides follow the order -choose from $I_1$ through $I_3$, we add one rule : for the same type of free subarcs, take the one with smaller upper index first. Then all free subarcs will eventually get collapsed. Furthermore, we may define $N_i$ to be the least integer such that all free subarcs with upper index $i$ get collapsed by stage $N_i$ $\Box$. 

   \vspace{1pc} 
Let $X$ be a band complex with underlying union of bands $Y$. For a fixed thin component $Y_0\subset X$,  let $X=X^0, X^1, \dots $ be an infinite sequence of band complexes formed by application of the Rips machine with respect to $Y_0$. In general, it is possible that the process of the Rips machine that is applied to $X^i$ to obtain $X^{i+1}$, bounces between Process I and Process II for a while before it eventually stabilizes with Process I. Moreover, the complexity of $X^i$ may actually decrease at the beginning stages. Nonetheless, there exists some integer $A$ such none of these above situations happens after $X^A$. So without loss, for the rest of Section ~\ref{thin}, we always make the assumption that in such a sequence $X=X^0, X^1, \dots $, only Process I occurs and the complexity is fixed for all $X^i$'s.  

Let $B\subset Y$ be a band of positive weight. We say $B$ \textit{vanishes} along the process if $B\cap Y^i=\emptyset$ for sufficiently large $i$ ($\overline{Y}^i \hookrightarrow \overline{Y}^0$). It is clear that bands only vanish in $I_1$-collapses. Since $I_1$-collapses reduce the complexity, under our fixed complexity assumption, we may further assume that only $I_2$-collapses and $I_3$-collapses occur along a sequence $X^0, X^1, \dots$. In particular, no band vanishes.

\subsection {Structure of Thin Type.}\hspace*{\fill}\label{s:thinstructure}

We will continue with the same notation as in Section~\ref{thinreview}. 
Denote the union of bases of all generalized bands in $\overline{Y}_i$ by $E_i$. Then $E_i$ is a union of at most $2N_B$ closed intervals where $N_B$ is the uniform upper bound of the number of generalized bands as in Proposition~\ref{thinprop}. $\overline{Y}_{i+1}\subset \overline{Y}_i$ implies that the sequence $E_1\supset E_2\supset \dots $ is nested. According to  \cite[proposition 8.12]{bfstableactions}, $\max\{\text{widths of generalized bands of } Y_i\} \to 0$ as $i\to \infty$. Therefore, the intersection $\displaystyle \cap_{i=0}^{\infty} E_i$ consists of at most $2N_B$ points. 
Denote these points by $e_1,e_2,\dots, e_n$ where $n\leq 2N_B$. Note that if a vertical fiber of a generalized band in $Y_i$ contains some $e_j$, $e_j$ must be an endpoint of that fiber.

A vertical fiber of a generalized band in $\overline{Y_i}$ is \textbf{short} if both its endpoints are contained in $\{e_1,e_2,\dots, e_n\}$. 

%Since leaves of the limiting lamination $L_{\infty}$ contain no loops (Proposition 8.12 \cite{bfstableactions}), 
%if there are two or more vertical fibers from $e_i$ to $e_j$, $i\neq j$, then at most one of them is contained in the interior of a band in $Y_i$. Therefore, the total number of \textit{short} vertical fibers is bounded above by ${n\choose 2}+2N_B$. 
Let $s_1, s_2, \dots, s_m$ be the list of \textit{short} vertical fibers. Since widths of bands in $Y_i$ converge to $0$ with $i$, each generalized band in $Y_i$ contains at most one $s_j$ for $i$ sufficiently large. In particular, $m\leq N_B$. %Since leaves of the limiting lamination $L_{\infty}$ contain no loops (Proposition 8.12 \cite{bfstableactions}),  we have the following.
%Let $K$ be the least integer such that every $s_j$ is already a vertical fiber in $Y_K$. 
By definition of short fibers, we have the following.

\begin{lemma}\label{si}
In $\overline{Y}_i$, every loop in a leaf is contained in $\displaystyle \mathcal{S}=\cup_{j=1}^{m} s_i$. 
\end{lemma}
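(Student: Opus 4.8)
The plan is to exploit the rigidity of loops in leaves under Process~I: a loop never meets a collapsed region, so it survives unchanged into $\overline{Y}_j$ for every $j\ge i$, and this forces the endpoints of its fibers into $\bigcap_j E_j=\{e_1,\dots,e_n\}$, i.e. forces every fiber of the loop to be short.

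First I would pin down the combinatorics of a loop. A leaf of $\overline{Y}_i$ is a union of points of $\Gamma_i$ and vertical fibers of bands, and the leaf cannot branch at an interior gluing point of a generalized band, since such a point meets no other positive-weight base (and there are no weight $0$ bands in $\overline{Y}_i$). Hence whenever a leaf enters a generalized band through one of its two bases it runs straight across to the opposite base. Consequently, after tightening it in its leaf, we may assume a loop $\ell$ in a leaf of $\overline{Y}_i$ is a cyclic concatenation $g_1,\dots,g_k$ of full vertical fibers of generalized bands, where consecutive fibers meet at \emph{transition points} $p_1,\dots,p_k\in\Gamma_i$; by maximality of generalized bands each $p_t$ lies on the bases of two distinct generalized bands, the only exception being $k=1$, when $\ell$ is the core fiber of a weight $\frac{1}{2}$ (Mobius) generalized band and $p_1$ lies on its base. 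In every case each $p_t$ lies on a base of a generalized band, so $p_t\in E_i$.

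Next I would show that a Process~I collapse leaves $\ell$ untouched. If the collapse is performed from a maximal free subarc $J$ of a base $b$, the collapsed region consists of the fibers lying over $\mathrm{int}(J)$, together with the fiber over an endpoint of $b$ in the degenerate case of move (M5). By Definition~\ref{freesubarc}, $\mathrm{int}(J)$ meets no positive-weight base other than $b$ --- for a weight $\frac{1}{2}$ base one first subdivides so that $J$ lies in a weight $1$ base, and the midpoint is excluded from $\mathrm{int}(J)$. Therefore a fiber over $\mathrm{int}(J)$ has an endpoint lying on no positive-weight base other than $b$, so the leaf terminates there and this fiber cannot be part of a loop; and a transition point $p_t$, lying on at least two positive-weight bases, cannot lie in $\mathrm{int}(J)$ at all. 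Hence no $g_t$ and no $p_t$ is removed: the image of $\ell$ in $\overline{Y}_{j+1}$ is again $\ell$, still a cyclic concatenation of full generalized-band fibers with the same transition points, and each $p_t$ still lies on at least two positive-weight bases. Iterating over $j\ge i$ gives $p_t\in E_j$ for all $j$, hence $p_t\in\bigcap_j E_j=\{e_1,\dots,e_n\}$. Thus each fiber $g_t$ of $\ell$ has both endpoints in $\{e_1,\dots,e_n\}$, i.e. is short, and $\ell\subset s_1\cup\dots\cup s_m=\mathcal{S}$.

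The main obstacle is the bookkeeping in the second step for the degenerate collapses: when the free subarc contains an endpoint of its base (so the fiber over that endpoint collapses and two generalized bands merge into one) and when weight $\frac{1}{2}$ bases are present. One must check, using the maximality of generalized bands together with the precise form of (maximal) free subarcs and of move (M5), that a transition point is never swallowed --- neither into a collapsed fiber nor into the interior of a generalized band after a merge --- so that $\ell$ persists with its transition points intact; once this is in place the inductive claim that $p_t$ stays on at least two positive-weight bases, hence stays in every $E_j$, is routine.
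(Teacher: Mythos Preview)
Your argument is correct and follows the only natural line of reasoning, which is also the one the paper has in mind. The paper, however, gives no actual proof: it simply records the lemma with the comment ``By definition of short fibers, we have the following,'' taking for granted that a loop in a leaf survives every Process~I collapse and hence that its transition points lie in $\bigcap_j E_j=\{e_1,\dots,e_n\}$. Your write-up unpacks precisely this implicit step.

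Two remarks on the edge cases you flag. First, the paper's standing assumptions at the end of Section~\ref{thinreview} (fixed complexity, hence no $I_1$-collapses) already rule out the case where an entire band is removed, which is the only way a full fiber of a loop could genuinely disappear. Second, for the degenerate $I_2/I_3$ case where $J$ contains an endpoint $x$ of $b$ and the boundary fiber $\{x\}\times I$ is collapsed: since $\mathrm{int}(J)$ meets no other positive-weight base, any other base through $x$ must lie on the opposite side of $x$ from $b$, so $x$ is a block endpoint and the leaf cannot continue at $x$; hence no transition point of $\ell$ sits there. With these observations your inductive claim ``each $p_t$ lies on at least two positive-weight bases in $\overline{Y}_j$ for every $j\ge i$'' goes through cleanly, and the obstacle you anticipated does not materialize.
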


\begin{definition}\label{short}
   %Following the above notation
    In $Y_i$, we say a generalized band of positive weight is \textbf{short} if one of its vertical fibers is a subset of a \textit{short} vertical fiber. Otherwise, we say a generalized band is \textbf{long}. In particular, for $i$ sufficiently large, a generalized band is short if and only if it contains one of $\{s_j\}_j$.
\end{definition}

\begin{prop} \label{thin structure}
Following the same notation as in Proposition~\ref{thinprop}, we have that in $Y_i$'s: 
   \begin{enumerate}
      \item Lengths of short bands are bounded above by $L_s=\max_{j\in \{1,\dots, m\}}\{l(s_j)\}$.
      \item There exists an integer $M_s$ such that for any $i>M_s$, $Y_i$ contains exactly $m$ short bands. Further, for all $i>M_s$, there is a natural bijection between short bands in $Y_i$ and $Y_{i+1}$. More precisely, every short band $\BB_{Y_{i+1}}$ in $Y_{i+1}$ is a sub-generalized band of a short band $\BB_{Y_i}$ in $Y_i$ (induced by the inclusion $\overline{Y}_{i+1}\hookrightarrow \overline{Y}_i$).
      \item $Y_i$ contains at least one long band, for all large $i$.
      \item For any fixed number $k$, there exists a number $N(k)$ such that the lengths of long bands in $Y_i$ are all greater than $k$ for $i>N(k)$. In particular, lengths of long bands are going to $\infty$ as $i\to \infty$ in $Y_i$.

   \end{enumerate}

\end{prop}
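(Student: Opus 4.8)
\subsection*{Proof proposal}

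The plan is to prove the four assertions in turn, in each case exploiting the finiteness of the set $\{e_1,\dots,e_n\}=\bigcap_i E_i$ together with Proposition~\ref{thinprop} and Lemma~\ref{increasing}. Part (1) is immediate from the definitions: if $\BB$ is a short generalized band in $Y_i$, then one of its vertical fibers is contained in some short fiber $s_j$, and since every vertical fiber of $\BB$ has length $l(\BB)$, we get $l(\BB)\le l(s_j)\le L_s$.

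For Part (2), I would fix $M_s$ large enough that each generalized band of $Y_i$ contains at most one of the $s_j$; this is possible because $\max\{\text{widths of generalized bands of }Y_i\}\to 0$. Each $s_j$ has endpoints in $\bigcap_k E_k$, hence lies in $E_i$ for every $i$, so $s_j$ is a subfiber of a unique generalized band $\mathcal{B}_i^{(j)}$ of $Y_i$; this band is short, and for $i>M_s$ every short band of $Y_i$ arises this way since then a band is short iff it contains one of the $s_j$. Thus $Y_i$ has exactly $m$ short bands, labelled by $j\in\{1,\dots,m\}$, and $\mathcal{B}_i^{(j)}\mapsto\mathcal{B}_{i+1}^{(j)}$ is the asserted bijection. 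That $\mathcal{B}_{i+1}^{(j)}$ is a sub-generalized band of $\mathcal{B}_i^{(j)}$ follows from the inclusion $\overline Y_{i+1}\hookrightarrow\overline Y_i$ and the local picture of an $I_2$- or $I_3$-collapse (rule (4) of Process~I, cf.\ Figure~\ref{Ft1}): under our standing assumptions no positive-weight generalized band vanishes, images of short bands are short (the fiber $s_j$ persists into $\overline Y_{i+1}$), and lengths are bounded by $L_s$, so the only way $\mathcal{B}_{i+1}^{(j)}$ can be obtained from $\mathcal{B}_i^{(j)}$ is by narrowing its base.

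Part (3) I would prove by contradiction. If $Y_i$ contained no long band for all large $i$, then by Part~(2) all bands of $Y_i$ are among the $m$ short bands; tracking each along the bijection, its length is non-decreasing and bounded by $L_s$, hence eventually constant, so no $\delta_i$ is increasing for $i$ large. On the other hand $I_3$-collapses occur infinitely often (as in the proof of Proposition~\ref{thinprop}(3), via \cite[Proposition 7.2]{bfstableactions}), and by Lemma~\ref{increasing} an $I_3$-collapse is increasing unless $a_{i+1}=a_i+1$, while any collapse with $a_{i+1}<a_i$ is increasing. Since the $a_i$ are bounded, they cannot increase infinitely often without decreasing infinitely often, so infinitely many collapses are increasing --- a contradiction. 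Hence some $Y_i$, and therefore all later $Y_i$, contains a long band.

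Part (4), which I expect to be the main obstacle, I would handle by a König-type argument on the forest of ``lineages'' of long bands. Fix $k$ and suppose, for contradiction, that for arbitrarily large $i$ the component $Y_i$ contains a long band of length $\le k$. Form the directed forest whose nodes at level $i>M_s$ are the long bands of $Y_i$ of length $\le k$, with an edge from a node of $Y_i$ to each of its images in $Y_{i+1}$ of length $\le k$; since images of short bands are short and lengths are non-decreasing along images (Notation~\ref{sequencenotation}), every node at level $i+1$ has a predecessor at level $i$, while each level has at most $N_B$ nodes. If this forest meets infinitely many levels, König's lemma yields an infinite path $\BB_{i_0}\to\BB_{i_1}\to\cdots$ of long bands of length $\le k$, whose lengths are non-decreasing and bounded, hence eventually equal to a fixed $\ell$. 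But a long band contains no $s_j$, and it cannot meet $L_\infty$ in a vertical fiber: any fiber in $L_\infty=L\cap\bigcap_i\overline Y_i$ persists in every $\overline Y_i$, so its endpoints lie in $\bigcap_i E_i=\{e_1,\dots,e_n\}$ and it is a short fiber. By Proposition~\ref{thinprop}(3) the band must then fully collapse within finitely many steps, contradicting the infinite path. Hence the forest is finite, so there is $N(k)$ with no long band of $Y_i$ of length $\le k$ for $i>N(k)$; letting $k\to\infty$ gives the final sentence. The delicate point throughout will be keeping the bookkeeping of ``images'' of generalized bands consistent across $I_2$- and $I_3$-collapses, especially the claim that images of short bands are short and that long bands of bounded length genuinely fail to meet $L_\infty$.
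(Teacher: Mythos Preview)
Your proposal is correct and follows the paper's approach closely. Parts (1)--(3) are essentially identical to the paper's argument: the paper simply says (1) and (2) ``follow directly from Definition~\ref{short}'' (you supply the details), and for (3) both you and the paper derive a contradiction from the existence of infinitely many increasing collapses, obtained by combining Lemma~\ref{increasing} with the boundedness of $a_i$ and the fact that $I_3$-collapses occur infinitely often.

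For Part (4) you take a mild detour. The paper argues directly: once the minimal long-band length stabilizes at $l^*$, one extracts (by the same finite-branching reasoning you call K\"onig's lemma) a nested sequence $\BB_{Y_{i+1}}\hookrightarrow\BB_{Y_i}$ of long bands of length exactly $l^*$, and since widths go to zero the intersection $\bigcap_i\BB_{Y_i}$ is a single vertical fiber whose endpoints lie in $\bigcap_i E_i=\{e_1,\dots,e_n\}$, hence is one of the $s_j$ --- contradicting that the $\BB_{Y_i}$ are long. Your route via Proposition~\ref{thinprop}(3) and $L_\infty$ reaches the same contradiction but adds an extra layer; in particular, your assertion that a vertical fiber of a long band lying in $L_\infty$ has endpoints in $\bigcap_i E_i$ needs the observation that a base point of a generalized band in $Y_{i_0}$ which survives into $\overline Y_i$ remains in the real graph, hence in $E_i$. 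This is true but tacit in your write-up; the paper's direct nested-intersection argument sidesteps the issue entirely.
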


\begin{proof}
$(1)$, $(2)$ follow directly from Definition~\ref{short}.

(3) Since the number of generalized bands in $Y_i$ is uniformly bounded, we only need to show that there are infinitely many \textit{increasing} $\delta_i$'s in the the sequence $Y_0\overset{\delta_0} \to Y_1 \overset {\delta_1}\to Y_2 \dots$. We are going to construct an infinite subsequence of $\{Y_i\}_i$ such that for each $Y_i$ in that subsequence, its corresponding $\delta_i$ is increasing. Firstly, since $a_i$, the number of generalized bands in $Y_i$, is bounded by $N_s$. There exists a subsequence  $A=\{Y_{k_1}, Y_{k_2},\dots\}$ consists all $Y_{k_j}$'s with the property that for any fixed $j$,  $a_{k_j}\geq a_i$ for all $i\geq k_j$. Starting with $Y_{k_1}$, if $\delta_{k_1}$ is increasing, we continue to check $\delta_{k_{2}}$. Otherwise, by Lemma ~\ref{increasing}, either $\delta_{k_1}$ is an $I_2$-collapse with $a_{k_1+1}=a_{k_1}$, in which case we delete $Y_{k_1}$ from $A$ ($Y_{k_1+1}$ must also in $A$); Or $\delta_{k_1}$ is an $I_3$-collapse with $a_{k_1+1}>a_{k_1}$, which contradicts to the choice of $Y_{k_1}$. Since $I_3$-collapses happen infinitely often in $\{Y_0, Y_1,\dots\}$ (\cite[Proposition 7.2]{bfstableactions}), $A$ is an infinite set. 

(4) Let $l_i$ be the length of a shortest long bands in $Y_i$. Then the sequence $\{l_i\}_i$ is non-decreasing as $i\to \infty$. We need to prove that the sequence is indeed increasing. Assume that it is bounded above. Then for all sufficiently large $i$, $l_i$ is a fixed number $l^*$. Note that each long band in $Y_{i+1}$ with length $l^*$ is a sub-generalized band of some long band in $Y_i$, which also has length $l^*$. This implies that there exists a sequence of long bands $\{\BB_{Y_i}\}_i$ such that $\BB_{Y_{i+1}}\hookrightarrow \BB_{Y_i}$ and $l(\BB_{Y_i})=l^*$ for all $i$'s. Thus $\displaystyle \cap_{i}^{\infty} \BB_{Y_i}\in \{s_1, s_2,\dots, s_m\}$. But this contradicts the assumption that $\BB_{Y_i}$'s are long bands.\end{proof}

According to the above proposition, a generalized band in $Y_i$ is short if and only if its length is less than $L_s$, for $i>\max\{M_s, N(L_s)\}$. Otherwise it is long (thus the name).

\begin{definition}\label{island}
   $\{e_1,\dots, e_n\}$ are defined as above. In $Y_i$, an \textbf{island} is either 
   \begin{itemize}
       \item \textit{Type 1} a connected component of the union of short bands along with the blocks it intersects, see Figure~\ref{fgisland},
       or
       \item \textit{Type 2} a block containing an $e_j$ which is not contained in an type 1 island.
   \end{itemize}

   %Let $R=\{e_1,\dots, e_n\}\cup \{s_1,\dots s_m\}\subset Y_i$.  In each $Y_i$, every connected component of $R$ determines an \textbf{island} as following:
   %\begin{itemize}
      %\item If the component of $R$ consists a single point $e_k$, then call the block of $\Gamma$ containing $e_k$ an island;
      %\item If the component of $R$ consists a subset of $\{s_1,\dots s_m\}$, then call the corresponding union of short bands an island.
   %\end{itemize}
   An island is \textbf{contractible} if it is homotopy equivalent to a point. In particular, every type 2 island is contractible.
\end{definition}

\begin{figure}[h!]
\centerline{\includegraphics[width=0.4\textwidth]{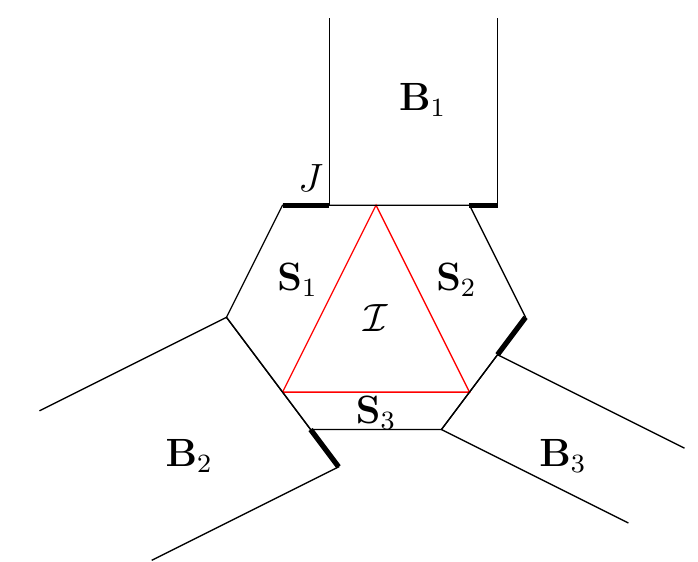}}
\caption{\footnotesize A type 1 island $\mathcal{I}$ formed by the union of short bands $\mathbf{S}_1,\mathbf{S}_2,\mathbf{S}_3$ (determined by the union of \textit{short fibers} showing in red). $\BB_1,\BB_2$ and $\BB_3$ are long bands, one of whose bases is contained $\mathcal{I}$. Suppose that $Y_{n+1}$ is obtained from $Y_n$ by collapsing from $J$. This preserves the homeomprhism type of $\mathcal{I}$.} \label{fgisland}
\end{figure}

\begin{prop}\label{islandprop}
    Let $X$ be a band complex with underlying union of bands $Y$ and $Y_0\subset Y$ be a component of thin type.
   Suppose that $Y_0, Y_1,\dots$ is an infinite sequence formed by application of Process I.
   
   \begin{enumerate}
   \item There exists an integer $M_I$ such that the number of islands in $Y_i$ is fixed for all $i>M_I$. More precisely,  let $\displaystyle \mathcal{S}=\cup_{j=1}^{m} s_i$, $\displaystyle \mathcal{E}=\cup_{j=1}^{n} e_i$ and $N_{I}$ be the fixed number for islands. $N_{I}$ equals the number of connected components of $\mathcal{S}\cup\mathcal{E}$. 
   
   \item For $i>M_I$, there exists a natural bijection $f_i$ between the set of islands in $Y_i$ and the set of islands in $Y_{i+1}$ with the following properties:
   \begin{enumerate}
   \item The restriction of $f_i$ to each island is a homotopy equivalence.
   \item The restriction of $f_i$ to $\mathcal{S}\cup\mathcal{E}$ is the identity map.
   \item $f_i$ maps vertical fibers to vertical fibers
   \item After passing to a subsequence, $f_i$ is a homeomorphism on each island.
   %non-contractible attaching regions of $Y_i$. 
   \end{enumerate}
   \end{enumerate}  
   %that the restriction of $f_i$ on each island is a homotopy equivalence. Moreover, after passing to a subsequence, $f_i$ can be constructed to be a homeomorphism on each island.
   
   %\item There exists $f_i$ as described in previous item with an extra property that it is the identity map when restricts to non-contractible attaching regions of $Y_i$. 
       
  %$f_i$ is called an isomorphism between islands in $Y_{i}$ and islands in $Y_{i+1}$. We say that islands eventually stabilize. 
      
      %\item  For $i>M_I$, let $\mathcal{I} \subset Y_i$ be a type 1 island and $\mathcal{S}_{\mathcal{I}}$ be the component of $\mathcal{S}$ it contains. $\mathcal{I}$ is trivial if and only if $\mathcal{S}_{\mathcal{I}}$ contains no loops.
%In particular, if $\mathcal{I}$ is non-trivial, $\mathcal{S}_{\mathcal{I}}$ must contain some $s_j$ that is in the boundary of $Y_i$. 
      
     % \item Every long band is naked \footnote{This is proved in \cite{bfstableactions}}. Each base of a long band is contained in an island. %Suppose further that $X$ resolves a $G$-tree $T$. Then $G$ splits over a subgroup that fixes an arc of $T$.
     
      %\item For $i>M_I$, there is an isomorphism $f_i$ between islands in $Y_{i}$ and islands in $Y_{i+1}$ which preserve measure and is identity on the attaching region of $Y_{i}$. 

\end{prop}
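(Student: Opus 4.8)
The plan is to fix an index $M_I$ past which all the stabilization statements of Propositions~\ref{thinprop} and~\ref{thin structure} hold simultaneously, and then to identify both the islands of $Y_i$ and the maps $f_i$ directly in terms of the fixed finite set $\mathcal{S}\cup\mathcal{E}$, viewed inside each $\overline{Y}_i$ via the nested inclusions $\i_i$. Concretely, I will take $M_I$ large enough that for $i>M_I$: the complexity of $Y_i$ equals the fixed value $\mathcal{C}$ and only $I_2$- and $I_3$-collapses occur; $\overline{Y}_i$ has exactly $m$ short bands, each containing exactly one of $s_1,\dots,s_m$, and these are in the length-preserving bijection of Proposition~\ref{thin structure}(2) realized by $\i_i\colon\overline{Y}_{i+1}\hookrightarrow\overline{Y}_i$, with $\mathbf{S}_j^{(i+1)}$ a sub-generalized band of $\mathbf{S}_j^{(i)}$ of the same length; and, crucially, every block of $\Gamma_{Y_i}$ is contained in an arbitrarily small neighbourhood of one of the points $e_1,\dots,e_n$. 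The last point follows from the uniform bound $a_i\le N_B$ on the number of generalized bands (Proposition~\ref{thinprop}(2)) together with $\max\{\text{widths of generalized bands of }Y_i\}\to0$ (\cite[Proposition 8.12]{bfstableactions}): a block is covered by at most $2N_B$ bases, each of width tending to $0$, so its diameter tends to $0$, and since $\bigcap_i E_i=\{e_1,\dots,e_n\}$ its limit is one of the $e_a$.

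For part (1) I will exhibit a bijection between the islands of $Y_i$ and the connected components of $\mathcal{S}\cup\mathcal{E}$. Every island of $Y_i$ contains a point of $\mathcal{S}\cup\mathcal{E}$: a type~1 island contains a short band, hence the fibre $s_j$ inside it, while a type~2 island is by definition a block containing some $e_a$. Conversely, each $s_j$ sits in the unique type~1 island carrying $\mathbf{S}_j^{(i)}$, and each $e_a$ sits in a block of $\Gamma_{Y_i}$ that is either absorbed into a type~1 island or is itself a type~2 island, so sending an island to the component of $\mathcal{S}\cup\mathcal{E}$ meeting it is a well-defined surjection --- well-defined because two short fibres sharing an endpoint belong to short bands sharing a base (the two endpoints of $s_j$ lie in the two bases of $\mathbf{S}_j^{(i)}$), hence to the same type~1 island, so an entire component of $\mathcal{S}\cup\mathcal{E}$ lands in a single island. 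Injectivity is the substantive point and is where the block-shrinking fact is used: if two short bands lie in a common island they are joined by a chain of short bands meeting along common blocks, and since for $i>M_I$ every such block lies near a single $e_a$, the bases at the two ends of each link have the same limit point, so the corresponding short fibres are joined through shared endpoints inside $\mathcal{S}\cup\mathcal{E}$; dually, a block near $e_a$ meeting no short band forces $e_a$ not to be an endpoint of any short fibre, so the type~2 island it determines matches the singleton component $\{e_a\}$. Hence for $i>M_I$ the number of islands is constant and equals the number of components of $\mathcal{S}\cup\mathcal{E}$, which is the stated $N_I$.

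For part (2), define $f_i$ to pair an island of $Y_i$ with the island of $Y_{i+1}$ containing the same component of $\mathcal{S}\cup\mathcal{E}$. Properties (b) and (c) are then immediate: under the identifications induced by $\i_i$ the set $\mathcal{S}\cup\mathcal{E}$ is fixed pointwise, and both the collapse $\delta_i$ and the inclusion $\i_i$ preserve the vertical-fibre structure. For (a): the free subarc $J_i$ collapsed in passing from $Y_i$ to $Y_{i+1}$ is not contained in a block of an island (a block is covered by interiors of overlapping bases, hence carries no free subarc), and its collapse changes a short band at most by passing to a thinner sub-generalized band (Proposition~\ref{thin structure}(2)), which leaves each island unchanged up to homeomorphism on its short-band part; so the only effect on an island is an elementary subdivision or the adjoining/deleting of a contractible arc coming from a long band attached along it (as in Figure~\ref{fgisland}), and the collapse map $\delta_i$ restricts to a homotopy equivalence $\mathcal{I}\to f_i(\mathcal{I})$. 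Finally (d): each island is assembled from the $m$ short bands and finitely many blocks of $\Gamma_{Y_i}$, so its homeomorphism type is recorded by a bounded amount of combinatorial data taking only finitely many values as $i$ varies; choosing a subsequence along which the combinatorial type of every island is constant, each $f_i$ in that subsequence can be taken to be a homeomorphism on every island.

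The main obstacle I anticipate is the injectivity step in part (1): one has to rule out that two short bands which are ``accidentally'' joined through a common block for all large $i$ could correspond to short fibres lying in different components of $\mathcal{S}\cup\mathcal{E}$, and this is exactly what the combination of the uniform bound on the number of generalized bands (Proposition~\ref{thinprop}(2)) and the vanishing of their widths is needed for; everything else is bookkeeping with the already-established stabilization results.
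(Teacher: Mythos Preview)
Your overall architecture matches the paper's: establish a bijection between islands and components of $\mathcal{S}\cup\mathcal{E}$ for part (1), then define $f_i$ via that bijection for part (2). The paper argues (1) the same way you do (contrapositively: if two short fibres do \emph{not} share an endpoint, an $\epsilon$-separation of their endpoints together with widths $\to 0$ forces their short bands to disconnect), so there is no substantive difference there.

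The place your proof diverges, and where there is a genuine gap, is item (a). Your claim that ``the free subarc $J_i$ collapsed in passing from $Y_i$ to $Y_{i+1}$ is not contained in a block of an island (a block is covered by interiors of overlapping bases, hence carries no free subarc)'' is false. A block is the closure of a connected component of the union of interiors of bases; nothing prevents part of it from being covered by a single base, and that is precisely where free subarcs live. Moreover, for $i$ large every block contains some $e_j$, so every block lies in some island; hence $J_i$ is always in a block of an island. Tracking $\delta_i$ through the island is therefore more delicate than you suggest, and it is also not clear that the map you want for (a) is $\delta_i$ at all (the $f_i$ in the statement is not the collapsing map).

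The paper bypasses this entirely: since a type~1 island $\mathcal{I}^\alpha_i$ is a union of short bands (each a product $b\times I_n$) glued along blocks containing the endpoints of $\alpha$, it deformation retracts onto $\alpha$ itself; the same holds for $\mathcal{I}^\alpha_{i+1}$, so both have the homotopy type of $\alpha$, giving (a) and (b) simultaneously. The map $f_i$ is then \emph{constructed} band-by-band as $h\times\mathrm{id}_{I_n}$ with $h:b_i\to b_{i+1}$ a surjection between bases, which is what makes (c) immediate. Your argument for (d) is correct and is exactly what the paper does.
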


\begin{proof}
(1) To show the existence of $M_I$, we only need to verify two facts. One is that eventually distinct short bands either stay connected or disconnected. The second is that each $e_j$ is either always or never contained in a type 1 island. For $i$ large, let $\BB^1_{Y_i}$ and $\BB^2_{Y_i}$ be two short bands in $Y_i$. Without loss, we may assume that $s_1\subset \BB^1_{Y_i}$, $s_2\subset \BB^2_{Y_i}$ are short vertical fibers determining them. In $Y_{i+j}$, denote the short band containing $s_k$ by $\BB^k_{Y_{i+j}}$, $k=1,2$. If $s_1$ and $s_2$ share an endpoint, it is clear that $\BB^1_{Y_{i+j}}$ and $\BB^2_{Y_{i+j}}$ are connected for any $j\geq 0$. Otherwise, there exists an $\epsilon >0$ such that $\epsilon$-neighborhoods of endpoints of $s_1$ and $s_2$ are disconnected. Since widths of generalized bands in $Y_i$ converge to $0$ as $i\to \infty$. There exists $M^*>0$ such that $\BB^1_{Y_{i+j}}$ and $\BB^2_{Y_{i+j}}$ are disconnected for all $i+j>M^*$. Similar argument shows that there exists $M^{**}$ such that every $e_j\notin \mathcal{S}$ is not contained in any type 1 island. Thus for all $i>M_I=\max\{M^*, M^{**}\}$, every component of $\mathcal{S}\cup\mathcal{E}$ determines a unique island in $Y_i$. So $N_{I}$ equals the number of components of $\mathcal{S}\cup\mathcal{E}$.

(2) In $Y_i$, let the island determined by a connected component $\alpha$ of $\mathcal{S}\cup\mathcal{E}$ be $\mathcal{I}^{\alpha}_i$. 
Let $f_i$ be the function that takes an island $\mathcal{I}^{\alpha}_i$ in $Y_i$ to its corresponding island $\mathcal{I}^{\alpha}_{i+1}$ in $Y_{i+1}$. It is easy to check that $f_i$ is a bijection. 

Now consider the restriction to a fixed island of $f_i: \mathcal{I}^{\alpha}_i \to \mathcal{I}^{\alpha}_{i+1}$. If $\mathcal{I}^{\alpha}_i$ is a type 2 island, (a)-(d) hold for any map that is the identity on $e_j\in \alpha$. If $\mathcal{I}^{\alpha}_i$ is a type 1 island. Both $\mathcal{I}^{\alpha}_i$ and $\mathcal{I}^{\alpha}_{i+1}$ deformation retract to $\alpha$, thus (a) and (b). In more detail, for each short band $\BB_{i}=b_i\times I_n\in \mathcal{I}^{\alpha}_i$ and its corresponding short band  $\BB_{i+1}=b_{i+1}\times I_n\in \mathcal{I}^{\alpha}_{i+1}$, let $f_i|_{\BB_{i}}=h\times id_{I_n}$ where $h: b_i\to b_{i+1}$ is a surjective map, so (c) holds. Moreover, the combinatorial type of an island with fixed number of bands is bounded. After passing to a sub-sequence, (d) holds.
%each island in $Y_i$ is homeomorphic to its $f_i$-image in $Y_{i+1}$. %Thus, $f_i=h\times id_{I_n}$ satisfies bullets (a) through (d). %In more detail, for each short band $\BB_{i}=b_i\times I_n\in Y_i$ and its corresponding short band  $\BB_{i+1}=b_{i+1}\times I_n\in Y_i$ 
% let $f_i=h\times id$ where $h: b_i\to b_{i+1}$ is a surjective map. By composing with homotopies within bands (proper choice of $h$), $f_i$ is the identity $\mathcal{S}\cup\mathcal{E}$. 
\end{proof}

\begin{definition}\label{naked}
   A generalized band $\BB=b\times [0,n]$ of a band complex $X$ is \textbf{naked} if the only cells of $X$ meet $b\times (0,n)$ are subdivision annuli. A generalized band $\BB$ is \textbf{very naked} if $b\times (0,n)$ meets no cell of $X$.
   %the interior of $\BB$ (measured lamination on $\BB$) is disjoint from the attaching region of the $2$-cells in $Y$. Further $\BB$ is \textbf{very naked} if there are no subdivision annuli attached to $\BB$.
\end{definition}

By the definition of $\{e_j\}$, every block of $Y_i$ contains a point in $\{e_j\}$ for $i$ sufficiently large. Therefore, each base of a long band is contained in a block and the intersection between this block and some island is non-degenerate (not a point). Thus, every long band can be viewed as traveling from one island to another island (maybe the same one). Moreover, it is showed in \cite[proposition 8.13]{bfstableactions} that every long band is a naked band. Considering each island as a vertex, each long band as an edge and each weight $0$ band attached to a long band as an edge group, a thin component then has  a ``graph of spaces'' like structure. 

%After a finite sequence of sliding (M4) weight $0$ bands, we may assume that every long band is very naked. Thus every subdivision annulus is attached to an island.  Each island along with weight $0$ bands attached to it is called a \textbf{generalized island}. 

\vspace{1pc}

In fact, after passing to a subsequence, $f_i$ constructed in Proposition ~\ref{islandprop} can be extended into a homeomorphism from $Y_i$ onto $Y_{i+1}$. It is because that the combinatorial type of a $Y_i$ as a union of bands is bounded. More generally, $f_i$ can be further extended to entire band complexes as follows.

Let $\{Y_i\}_i$ be a subsequence such that $\{f_i: Y_i\to Y_{i+1}\}_i$ are homeomorphisms. In $Y_0$, let $u_0$ be a non-degenerate horizontal segment. Inductively define that $u_{i+1}=f_i(u_i)\subset Y_{i+1}$. Recall that the band complex containing $Y_i$ is denoted by $X^i$

\begin{cor}\label{thinboundary}
Suppose that every contractible attaching region of $Y_i$ is contained in $u_i$. Then after passing to a further subsequence, $f_i$ can be extended into a homeomorphism $f^i: X^i\to X^{i+1}$ with the property that $f^i$ is the identity map on the closure of the complement of $Y_i$ in $X^i$, for all $i$.
\end{cor}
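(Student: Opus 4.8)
The plan is to pass to a further subsequence on which the entire combinatorial picture stabilizes --- the union of bands, the finitely many cells of $X^i$ meeting $Y_i$, their attaching regions, and the segment $u_i$ --- and then to correct $f_i$ so that it restricts to the identity on every attaching region; the extension over the cells is then automatic.

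First I would record the relevant finiteness. The number of closed cells of $X^i$ meeting $Y_i$ is bounded independently of $i$, since it is unchanged by the moves constituting Process I (a cell that is not a subdivision annulus is simply homotoped upward along each collapse, and after passing to a subsequence we may assume the number of subdivision annuli is also constant), so the number and combinatorial type of attaching regions is bounded. Combining this with the bound on the combinatorial type of $Y_i$ as a union of bands that was already used to produce the homeomorphisms $f_i\colon Y_i\to Y_{i+1}$, a pigeonhole argument lets us pass to a subsequence on which: the combinatorial type of the pair $(Y_i,u_i)$ with its finitely many marked attaching regions is constant; there is a bijection between the cells of $X^i$ and those of $X^{i+1}$ compatible with the collapses; and each $f_i$ carries $u_i$ onto $u_{i+1}$ respecting these marked data and the decomposition into islands and long bands.

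Next I would arrange that $f_i$ is the identity on each attaching region. An attaching region that contains a loop lies in $\mathcal S=\bigcup_j s_j$ by Lemma~\ref{si}, and $f_i$ is already the identity on $\mathcal S\cup\mathcal E$ by Proposition~\ref{islandprop}(2), so such regions are already fixed. Every other (contractible) attaching region lies in $u_i$ by hypothesis. Since $u_{i+1}=f_i(u_i)$ and, after the previous step, $f_i$ matches the finitely many marked points of $u_i$ to the corresponding ones of $u_{i+1}$ in the combinatorially correct order, a small isotopy supported in a neighborhood of these attaching regions inside $u_i$ --- respecting the band structure and fixing $\mathcal S\cup\mathcal E$ --- replaces $f_i$ by a homeomorphism $Y_i\to Y_{i+1}$ that is the identity on every attaching region. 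The local model near a contractible attaching region is a horizontal arc with finitely many marked points, rigid enough for this isotopy to exist, and because all such regions were confined to $u_i$ the correction never disturbs the islands or the long bands.

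Finally I would extend. Using the bijection between the cells of $X^i$ and $X^{i+1}$, define $f^i$ to be $f_i$ on $Y_i$ and the identity on each cell under the identification; since $f_i$ is now the identity on every attaching region, the two definitions agree on overlaps, so $f^i$ is a well-defined continuous bijection, hence a homeomorphism as it is one on each member of a finite closed cover of $X^i$. By construction $f^i$ is the identity on $\overline{X^i\setminus Y_i}$, the union of the closed cells. The step I expect to be the main obstacle is the isotopy in the third paragraph: making $f_i$ the identity on the contractible attaching regions while keeping it a homeomorphism of unions of bands that still fixes $\mathcal S\cup\mathcal E$. This is precisely where the hypothesis that every contractible attaching region lies in the transported segment $u_i$ is used --- it confines all of the required rigidity to a region of bounded, stable combinatorial type that the $f_i$ move around consistently, so the adjustment can be carried out uniformly across the subsequence.
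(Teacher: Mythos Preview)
Your proposal is correct and follows essentially the same approach as the paper: split attaching regions into non-contractible ones (contained in $\mathcal{S}$, where $f_i$ is already the identity by Proposition~\ref{islandprop}(2)(b)) and contractible ones (points in $u_i$ by hypothesis, handled by adjusting $f_i$ after passing to a subsequence), then extend by the identity over the complement. The paper streamlines your bookkeeping by observing directly that $Z^i=\overline{X^i\setminus Y_i}$ is literally the \emph{same} space for all $i$ (Process~I only alters $Y_i$), so no bijection of cells or isotopy discussion is needed---one simply declares $f^i$ to be the identity on $Z^i$ and checks compatibility on attaching regions, which is exactly what your third paragraph accomplishes.
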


%Let $X$ be a band complex with underlying union of bands $Y$. For a fixed thin component $Y_0\subset X$,  suppose that $X=X^0, X^1, \dots $ is an infinite sequence of band complexes formed by successive applications process I (w.r.t. $Y_0$). Further in $X^i$, let the component corresponding to $Y_0$ be $Y_i$.
  
%$f_i$ constructed in Proposition \ref{islandprop} can be extended to $f^n: X^n\to X^{n+1}$ with the property that $f^n$ is the identity on the closure of the complement of $Y_n$ in $X^n$.
\begin{proof}
Denote the closure of complement of $Y_i$ in $X^i$ by $Z^i$. Note that $Z^i$ is the same space for all $i$. Let $f^i: Z_i\to Z_{i+1}$ be the identity map. We only need to check that $f_i$ is well-defined on attaching regions (intersections between $Y_i$ and $Z^i$). Since every attaching region is contained in a leaf, if an attaching region is not contractible, it must be contained in $\mathcal{S}$. Therefore, there $f_i: Y_i\to Y_{i+1}$ and $f^i: Z_i\to Z_{i+1}$ are compatible on non-contractible attaching regions (Proposition~\ref{islandprop} bullet (b)). Now, in total, there are finitely many contractible attaching regions, each of which is a point in $u_i$. By the assumption, after passing to a further subsequence, we may pick $f_i$ such that it maps each contractible attaching region, say $p$ to its image under $f^i$, i.e. $f_i(p)=f^i(p)$. Thus, we are done. \end{proof}

%By construction, every long band is very naked and so every attaching region of $Y_n$ is contained in a generalized island of $Y_n$. According to Item (2) of Proposition \ref{islandprop}, $f_n: Y_n\to Y_{n+1}$ is an identity on the attaching regions. Therefore $f_n$ can uniquely extend to $f^n: X^n\to X^{n+1}$ by defining $f^n$ to be the identity on the closure of the complement of $Y_n$ in $X^n$.

\begin{remark}\label{goodattachingregion}
The assumption that every contractible attaching region in $Y_i$ is contained in $u_i$ in Corollary~\ref{thinboundary} can be arranged for any sequence of band complexes $\{X^i\}_i$ as follows. For a fixed contractible attaching region (a point), suppose that a point $q\in Z_i$ is attached to a point $p\in Y_i$. Let $l_{p}\subset Y_i$ be the leaf containing $p$ and $p'$ be a point in $\in u_i\cap l_{p}$ ($l_{p}$ intersects $u_i$ infinitely often). We can then slide $q$ across $l_p$ to $p'$.  As there are finitely many attaching regions, the assumption holds.  \end{remark}

\subsection{Shortening Thin Type}\label{shortenthin}\hspace*{\fill} 

For a band complex $X$ with underlying union of bands $Y$, let $m$ be the transverse measure on $X$ and $\B$ be a set of loops in $X$ generating $\pi_1(X)$. Set $|X|_{\B}:=\sum_{\mu\in \B}m(\mu)$ be the \textbf{length} of $X$ with respect to $\B$. 
If $a: X\to X$ is a morphism that is also a homotopy equivalence with the property $|X|_{a(\B)}<|X|_{\B}$, then we say that $X$ (w.r.t. $\B$) is shortened by $a$.

%Further let $Y_0\subset Y$ be a thin component, $Y_0, Y_1,\dots$ be an infinite sequence formed by successive application of Process I and $X^i$ be the band complex containing $Y_i$. 

%Without loss, we may assume there exists $u_i\in Y_i$ satisfies assumptions of corollary~\ref{thinboundary} (otherwise, push contractible attaching regions in $Y_i$ as in remark~\ref{goodattachingregion} ).

For the rest of this section, in analogy to Sela's shortening argument for surfaces, we will show that length of $X$ can also be shortened using thin components.  First, we will work on a special set of loops.

%will use $f^i: X^i\to X^{i+1}$ constructed in Corollary \ref{thinboundary} to show that in analogy to Sela's shortening argument for surfaces, length of $X$ can also be shortened using thin components.

%Over this subsection, without loss, we will assume $Y_i$ contains no subdivision annulus since they have $0$ measure in the induced lamination.

\begin{definition}
   Let $u$ be a subarc of the underlying real graph $\Gamma$ of $X$ and a point $z_{0}$ in the interior of $u$ be a basepoint. A \textbf{short loop} (with respect to $u$)  is a loop $p_1*\lambda*p_2$ based at $z_0$ where $p_1$ and $p_2$ are paths in the interior of $u$ and $\lambda$ is a path within a leaf.
\end{definition}

Let $Y_0\subset X$ be a thin component, $Y_0, Y_1,\dots$ be an infinite sequence formed by Process I and $X^i$ be the band complex containing $Y_i$ ($X=X^0$). According to \cite[Proposition 5.8]{bfstableactions}, given any non-degenerated subarc $u$ of $\Gamma$, there exists a generating set of the image of $\pi_1(Y)$ in $\pi_1(X)$ consisting of short loops with respect to $u$. In particular, we may choose $u_i\subset Y_i$ that satisfies the assumption of Corollary~\ref{thinboundary}. %(otherwise, modify contractible attaching regions of $Y_i$ as in Remark~\ref{goodattachingregion}). 

Now, let $\B_0$ be a set of short loops (with respect to $u_0$) generating the image of $\pi_1(Y_0)$ in $\pi_1(X^0)$,  and $\B^0$ (containing $\B_0$) be a set of loops generating $\pi_1(X^0)$ with the property that each $\mu \subset \B^0-\B_0$ is a loop $p_1*\widetilde{\lambda}*p_2$ based at $z_0$ where $p_1$ and $p_2$ are paths in $u_0$ and $\widetilde{\lambda}$ is a path in $Z^0$ (the closure of the complement of $Y_0$ in $X^0$). The length of $X^0$ with respect to $\B^0$ is $|X^0|_{\B^0}:=\sum_{\mu\in \B^0}m_0(\mu)$ where $m_0$ is the transverse measure on $X^0$. For every short loop $\mu \in \B_0$ of the form $p^1_{\mu}*\lambda*p^2_{\mu}$, $m_0(\mu)=m_0(p^1_{\mu})+m_0(p^2_{\mu})$. For every $\mu \in \B^0-\B_0$ of the form $p^1_{\mu}*\widetilde{\lambda}*p^2_{\mu}$, $m_0(\mu)=m_0(p^1_{\mu})+m_0(p^2_{\mu})+m_0(\widetilde{\lambda})$.

Note that weight $0$ bands do not make any contribution for the length of a band complex. We may assume that $X$ contains no weight $0$ bands. So there is a natural inclusion $\i^i: X^{i+1}\hookrightarrow X^i$ which is a homotopy equivalence. On the other hand, there exists homeomorphism $f^i: X^i\to X^{i+1}$ as in Corollary ~\ref{thinboundary}. Let $a_k: X\to X$ be a map defined as 
$$a_{k}=\i^{0}\circ\dots \circ \i^{k-1}\circ f^{k-1}\circ \dots \circ f^{0}$$

It is clear that $a_k$ is a homotopy equivalence for all $k\geq 0$. 
\vspace{1pc}

\textbf{Claim} $a_k$ is a shortening map for large $k$'s.

\begin{proof}
For each short loop $\mu \in \B_0$, $f^{k-1,0}(\mu)=f^{k-1}\circ \dots \circ f^{0}(\mu)$ is a short loop in $Y_k$ with respect to $u_k$. Since $\i^i$ preserves transverse measure for all $i$, $a_{k}(\mu)$ and $f^{k-1,0}(\mu)$ has the same measure. Therefore, 
%every loop in $Y_l$ ($\hookrightarrow Y_k$) can be viewed as a loop in $Y_k$ for $l>k$. In particular, every short loop in $\B_l$ can be written in terms of short loops in $\B_k$. So $f^{k,l}=f^{l-1}\circ \dots \circ f^{k}: X^k\to X^{l}$ induces an automorphism $\alpha_{k,l}: H=\pi_i(X^i)\to H$. More precisely, for every $\mu$ %\footnote{We abuse the notation $\mu$ here for both the element and the short loop representing the element.}  
%in $\B^k$, $f^{k,l} (\mu) $ is a loop in $X^l$. \pw{more work.}and so can be realized as an element generated by $\B_k$. 
$m_0(a_k(\mu))=m_k(f^{k-1,0}(\mu))\to 0$ as $k\to \infty$ due to measures of bands in $Y_k$ go to $0$ as $k\to \infty$. In particular, for any $\epsilon$ satisfying the following,
 
 $$\min\{m_{0}(\mu)|\mu\in \B_0\}>\epsilon>0,$$
 we may pick $k$ sufficiently large such that $m_{0}(a_k(\mu))< \epsilon$ for all $\mu \in \B_0$. 
 
 Moreover, for every $\mu=p^1_{\mu}*\widetilde{\lambda}*p^2_{\mu} \in \B^0-\B_0$, since $\i^i$ and $f^i$ are the identity on $Z^i$, for all $i$, we have that 
 $$m_0(p^1_{\mu})+m_0(p^2_{\mu})+m_0(\widetilde{\lambda})>m_k(f^{k-1,0}(p^1_{\mu}))+m_k(f^{k-1,0}(p^2_{\mu}))+m_0(\widetilde{\lambda})$$ 
 
So $m_0(\mu)>m_0(a_k(\mu))$ for $\mu\in \B^0-\B_0$ as $k\to \infty$. 
  %$$m_0(\mu)=m_0(p^1_{\mu})+m_0(p^2_{\mu})+m_0(\widetilde{\lambda})>m_k(f^{k-1,0}(p^1_{\mu}))+m_k(f^{k-1,0}(p^2_{\mu}))+m_0(\widetilde{\lambda})=m_0(a_k(\mu))$$. 
Therefore, for sufficiently large $k$, we have :
 $$ |X|_{\B^0} > \epsilon*|\B_0|+\sum_{\mu\in \B^0-\B_0}m_0(\mu)> \sum_{\mu\in \B^0}m_0(a_k(\mu))= |X|_{a_k(\B^0)}.$$ 
 Thus $X$ is shortened by $a_k$. 
 \end{proof}
 
 In fact, $m_0(\mu)$ for $\mu \in \B_0$ and  $m_0(p^1_{\mu})+m_0(p^2_{\mu})$ for $\mu \in \B^0-\B_0$ can be shortened as much as one wants to by taking $k$ sufficiently large. Moreover, every generating set of $X$ can be written in terms of $\B^0$. Therefore $X$ can be shortened with any given fixed generating set. 
\vspace{5pc}

%This enlarging technique can be used in understanding questions arising from model theory such as solving equations over groups, extension problems and decision problems through a geometric point of view.

\nocite{vgstable, vgsmall, sela2,sela3,sela4, sela5, sela6, sela7, ripsinduction}
\nocite{reyinde}
\bibliographystyle{amsalpha}
\bibliography{ref}

\end{document}